\title{First-Order Methods for Optimal Experimental Design Problems with Bound Constraints}
\shorttitle{First-Order Methods for Bound Constrained OED Problems}
\author{Roland Herzog\thanks{Technische Universität Chemnitz, Faculty of Mathematics, 09107 Chemnitz, Germany (\email{roland.herzog@mathematik.tu-chemnitz.de}, \url{https://www.tu-chemnitz.de/mathematik/part_dgl/people/herzog}, \orcid{0000-0003-2164-6575}, \email{eric.legler@mathematik.tu-chemnitz.de}, \url{https://www.tu-chemnitz.de/mathematik/part_dgl/people/legler}, \orcid{0000-0003-1604-5601}).}
\and
Eric Legler\footnotemark[1]}
\begin{document}
\maketitle

% Insert abstract
\begin{abstract}
We consider a class of convex optimization problems over the simplex of probability measures.
Our framework comprises optimal experimental design (OED) problems, in which the measure over the design space indicates which experiments are being selected.
Due to the presence of additional bound constraints, the measure possesses a Lebesgue density and the problem can be cast as an optimization problem over the space of essentially bounded functions.
For this class of problems, we consider two first-order methods including FISTA and a proximal extrapolated gradient method, along with suitable stopping criteria.
Finally, acceleration strategies targeting the dimension of the subproblems in each iteration are discussed.
Numerical experiments accompany the analysis throughout the paper.\end{abstract}

% Insert keywords
\begin{keywords}
first-order methods, optimal experimental design, bound constraints, FISTA, proximal extrapolated gradient method, simplicial decomposition\end{keywords}

% Insert Mathematics Subject Classification (MSC2010)

% Insert document body
%------------------------------------------------------------------
\section{Introduction}
\label{sec:Introduction}
%------------------------------------------------------------------

In this paper we consider problems of the following form:
\begin{equation}
	\label{eq:OED_unregularized}
	\begin{aligned}
		\text{Minimize} \quad & F(\Lambda w), \quad w \in C(X)^* \\
		\text{s.t.} \quad & w \in \Delta_C \cap \bounds 
		.
	\end{aligned}
\end{equation}
Here $X$ is a compact subset of $\R^d$, and $C(X)^*$ is the dual of the space of continuous functions on $X$, which can be represented by signed, finite, regular Borel measures on $X$; see for instance \cite[Prop.~7.16]{Folland1984} or \cite[Thm.~6.19]{Rudin1987}.
The set $\Delta \subset C(X)^*$ is the standard simplex of probability measures, 
\begin{equation*}
	\Delta = \setDef{w \in C(X)^*}{w \ge 0 \text{ and } w(X) = 1}
\end{equation*}
and $\Delta_C = C \delta$ for some $C>0$.
Moreover, $\Lambda \in \LL(C(X)^*,\S^n)$ is a bounded linear operator where $\S^n$ is the space of symmetric $n$-by-$n$ matrices.
Finally, $F: \S^n \to \overline \R \coloneqq \R \cup \{+\infty\}$ is a proper, convex, lower semi-continuous function.

In the absence of $\bounds$, problems of type \eqref{eq:OED_unregularized} arise in the formulation of continuous sensor placement or, more generally, optimal experimental design (OED) problems for parameter identification; see \cite[Ch.~3]{Ucinski2005:1}, \cite[Ch.~5]{PronzatoPazman2013}.
In this setting, $X$ is termed the design space and $w$ is the design measure, which describes the weighted composition of an ensemble of measurements (a complete experiment) from elementary measurements (experiments).
In this context, $n$ is the dimension of the space of unknown parameters, and $\Upsilon(x) \in \S^n$ denotes the positive-semidefinite Fisher information of the elementary experiment associated with the design point $x$.
We assume that $\Upsilon$ depends continuously on $x$, i.e., $\Upsilon \in C(X;\S^n)$ and that
\begin{equation}
	\label{eq:synthesis_operator}
	\Lambda w \coloneqq \int_X \Upsilon(x) \d w(x)
\end{equation}
denotes the total Fisher information matrix (FIM) of the experiment with weight $w \in C(X)^*$.

In this paper we focus on OED problems of type \eqref{eq:OED_unregularized} in the presence of additional bound constraints.
Such problems have been discussed before, e.g., in \cite{Fedorov1989,CookFedorov1995,Pronzato2004,MolchanovZuyev2004,PatanUcinski2017} and \cite[Chapter~4.3]{FedorovHackl1997:1}.
Specifically, we consider
\begin{equation}
	\label{eq:OED_pointwise_constraints}
	\bounds \coloneqq \setDef{w \in C(X)^*}{0 \le w(A) \le \abs{A} \text{ for all Borel sets } A \subset X}
	,
\end{equation}
where $\abs{A}$ denotes the Lebesgue measure of $A$.
The constraints in \eqref{eq:OED_pointwise_constraints} can represent, among other things, an upper bound on the density of sensors placed in any given subset~$A$ of the observation domain~$X$.
Naturally, we need to assume $0 \le C \le \abs{X}$ in order for \eqref{eq:OED_unregularized} to be feasible.
Notice that \eqref{eq:OED_unregularized}--\eqref{eq:criteria} is a convex problem.

The presence of $\bounds$ changes problem \eqref{eq:OED_unregularized} quite a lot.
First of all, \eqref{eq:OED_pointwise_constraints} implies that the Borel measure~$w$ is absolutely continuous w.r.t.\ the Lebesgue measure on~$X$.
This implies that $w$ possesses a density (Radon--Nikodym derivative) in $L^1(X)$ w.r.t.\ the Lebesgue measure.
In a slight abuse of notation, we denote this density also by $w$.
Moreover, \eqref{eq:OED_pointwise_constraints} implies that $0 \le w \le 1$ holds a.e.\ in $X$ and thus in fact $w$ belongs to $L^\infty(X)$.

Second, the structure of optimal solutions of \eqref{eq:OED_unregularized} depends on whether or not $\bounds$ is present.
In the absence of $\bounds$, the set of optimal measures contains an element which has sparse support and is, in fact, a non-negative linear combination of at most $n \, (n+1)/2$ Dirac measures.
This is a consequence of Carath{\'e}odory's theorem; see for instance \cite[Thm.~3.1]{Ucinski2005:1}.
With $\bounds$ present, the set of optimal measures contains a representer which is of bang-bang type, i.e., $w(x) \in \{0,1\}$ holds a.e.\ in $X$; see \cite[Thm.~4.3.1]{FedorovHackl1997:1}.

A typical class of design criteria for OED problems is given by 
\begin{equation}
	\label{eq:criteria}
	F_q(\FIM) =
	\begin{cases}
		\paren[big](){\frac{1}{n} \trace \FIM^{-q}}^{\frac{1}{q}}
		&
		\text{for } q \in (0, \infty),
		\\
		\frac{1}{n} \ln \det \FIM^{-1}
		&
		\text{for } q = 0
	\end{cases}
\end{equation}
for positive definite matrices $\FIM \in \S^n$ (denoted by $\FIM \succ 0$); see for instance \cite[eq.(4.11)]{Kiefer1974} or \cite[eq.(2.17)]{Ucinski2005:1}. 
For the sake of simplicity, we omitted a multiplicative constant.
Important special cases are the A-criterion ($q = 1$), the logarithmic D-criterion ($q = 0$) and the E-criterion $F_\infty(I) = \max \operatorname{eig}(I^{-1})$ obtained in the limit $q \to \infty$.
When $\FIM \in \S^n$ is not positive definite, we set $F_q(\FIM) \coloneqq +\infty$.
Notice that all criteria satisfy the assumptions of being proper and convex functionals on $\S^n$.
Moreover, all except $F_\infty$ are of class $C^\infty$ on their domain.
Notice that for $I \succ 0$ we have $F_q(I) \to (\det I^{-1})^{1/n}$ as $q \searrow 0$, see \cite[eq.(4.11)]{Kiefer1974}, and thus the criterion $F_0$ is not the limit of $F_q$ but its logarithm.
However it is customary to use the equivalent criterion given in \eqref{eq:criteria} in case $q = 0$.

There are several classes of numerical approaches to solving \eqref{eq:OED_unregularized}--\eqref{eq:criteria} in the absence of $\bounds$.
An early, so-called additive approach has been presented by \cite[Ch.~2.5]{Fedorov1972,Wynn1970}, which proceeds by taking convex combinations of the current iterate and a particular vertex of the probability simplex~$\Delta_C$.
This idea has been further improved by \cite{NeitzelPieperVexlerWalter2018} in order to ensure the sparsity of the solution within each iteration.
By contrast, so-called multiplicative algorithms have been introduced for discretized versions of the problem, which modify all weights simultaneously, see  \cite{SilveyTitteringtonTorsney1978:1,Torsney2009:1,Yu2010:1}.
Alternative approaches are obtained by reformulating the problem into a second order cone or semidefinite program, see for instance \cite{SagnolHarman2015,AlizadehGoldfarb2003}.

We also mention problems which feature binary constraints $\bounds' = \setDef{w \in C(X)^*}{w(\{x\}) \in \{0,1\} \text{ for all } x \in X}$.
Hence in contrast to $\bounds$, only Dirac measures are allowed.
Examples include so-called exact sensor placement problems, where one seeks to identify the most useful positions for a given number of sensors.
Typical algorithms for a discrete versions of this problem are either sequential or of exchange type.
Sequential methods insert one sensor at a time, yielding suboptimal solutions; see for instance \cite{Papadimitriou2004:1,Willcox2006:1,HerzogRiedel2013:1}.
Exchange algorithms revisit previously placed sensors and replace a subset of them by unused ones, often heuristically.
For more details see \cite[Ch.~12]{AtkinsonDonevTobias2007}, \cite[Ch~4.3]{FedorovHackl1997:1} or \cite[Ch.~7]{Ucinski2005:1}. 

An exact approach to discretized exact sensor placement problems of branch-and-bound type, which takes into account the combinatorial nature, is discussed in \cite{UcinskiPatan2007}.
It requires the solution of subproblems which are discretized variants of \eqref{eq:OED_unregularized}--\eqref{eq:criteria}.
In order to solve these subproblems, the authors make use of a simplicial decomposition algorithm, where the inner, so-called restricted master problems, are solved using a multiplicative algorithm.
Recently, \cite{EtlingHerzogSiebenborn2018:1} proposed to replace the latter by FISTA (\cite{BeckTeboulle2009}), which significantly reduced the computational time.

The goal of this paper is to formulate, discuss and numerically compare first-order methods for the solution of \eqref{eq:OED_unregularized}--\eqref{eq:criteria} and its discrete variants.
First-order methods are popular for a variety of smooth and non-smooth optimization problems; see for instance \cite{EsserZhangChan2010,BurgerSawatzkySteidl2016,Beck2017}.
To the best of our knowledge, first-order methods (with or without simplicial decomposition) have not been considered for the solution of optimal experimental design problems with bound constraints~$\bounds$.

For the sake of completeness, we embed \eqref{eq:OED_unregularized}--\eqref{eq:criteria} into a family of regularized problems 
\begin{equation}
	\label{eq:OED_regularized}
	\begin{aligned}
		\text{Minimize} \quad & F_q(\Lambda w) + \frac{\alpha}{2} \norm{w}_{L^2(X)}^2, \quad w \in L^2(X) \\
		\text{s.t.} \quad & w \in \Delta_C \cap \bounds.
	\end{aligned}
\end{equation}
with parameter $\alpha \ge 0$.
Recall that the fact that the design measure~$w$ is sought in $L^2(X)$ is not a result of the regularization term but rather a consequence of the bound constraints in $\bounds$.
The case $\alpha > 0$ is interesting since the solution $w$ is unique, which is in general not the case for $\alpha = 0$.
Finally, we are going to shows that a choice of $\alpha > 0$ does not jeopardize the sparsity of the optimal design measure, which is an important consideration for the practical realization.

Taking into account that $w$ has a Lebesgue density, the synthesis operator \eqref{eq:synthesis_operator} can be extended to $\Lambda \in \LL(L^2(X),\S^n)$ and it can be written as
\begin{equation}
	\label{eq:synthesis_operator_with_density}
	\Lambda w \coloneqq \int_X \Upsilon(x) \, w(x) \dx
	.
\end{equation}
Here it is sufficient to assume $\Upsilon = L^2(X;\S^n)$.
For later reference, we note that the Hilbert space adjoint $\Lambda^* \in \LL(\S^n,L^2(X))$ of $\Lambda$ is given by
\begin{equation}
	\label{eq:synthesis_operator_with_density_Hilbert_adjoint}
	(\Lambda^* P)(x) = P \dprod \Upsilon(x) 
	,
\end{equation}
where $A \dprod B = \trace(A^\transp B)$ for matrices of equal size.
Moreover, we note that the scaled probability simplex in $L^2(X)$ can be written as 
\begin{equation}
	\label{eq:simplex_in_L2}
	\Delta_C 
	\coloneqq 
	\setDef[auto]{w \in L^2(X)}{w \ge 0 \text{ a.e.\ in $X$ and } \int_X w \dx = C} 
\end{equation}
and the bound constraints are
\begin{equation}
	\label{eq:bounded_in_L2}
	\bounds \coloneqq \setDef[auto]{w \in L^2(X)}{0 \le w \le 1 \text{ a.e.\ in $X$}}
	.
\end{equation}
Throughout, we assume that \eqref{eq:OED_regularized}--\eqref{eq:bounded_in_L2} is feasible and that 
\begin{equation}
	\label{eq:assumption_C}
	0 < C < \abs{X}
\end{equation}
holds, i.e., the solution is neither $w \equiv 0$ nor $w \equiv 1$.
Moreover, we assume that there exists $w \in \Delta_C \cap \bounds$ such that $F_q(\Lambda w) < \infty$ holds, i.e., there exists at least one feasible experiment which yields a positive definite Fisher information matrix.

\begin{proposition}[Existence of a solution]
	\label{prop:existence}
	Problem \eqref{eq:OED_regularized}--\eqref{eq:bounded_in_L2} is solvable.
	In case $\alpha > 0$, the solution is unique.
\end{proposition}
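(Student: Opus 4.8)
The plan is to use the direct method of the calculus of variations. First I would argue that the feasible set $\Delta_C \cap \bounds$ is nonempty (by the standing feasibility assumption), convex, bounded in $L^2(X)$ (since $0 \le w \le 1$ a.e.\ forces $\norm{w}_{L^2(X)}^2 \le \abs{X}$), and closed in the $L^2(X)$-norm topology; the bound constraints and the equality constraint $\int_X w \dx = C$ are both preserved under $L^2$-convergence along a subsequence converging a.e. Being closed, convex, and bounded, the feasible set is weakly sequentially compact in the Hilbert space $L^2(X)$.

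Next I would take a minimizing sequence $(w_k)$ for the objective $J(w) \coloneqq F_q(\Lambda w) + \frac{\alpha}{2}\norm{w}_{L^2(X)}^2$. By weak sequential compactness, after passing to a subsequence, $w_k \weakly \bar w$ for some $\bar w \in \Delta_C \cap \bounds$. It remains to show $J(\bar w) \le \liminf_k J(w_k)$. The regularization term $\frac{\alpha}{2}\norm{\cdot}_{L^2(X)}^2$ is weakly lower semicontinuous (norms are weakly l.s.c.; when $\alpha = 0$ this term is absent and the point is moot). For the first term, since $\Lambda \in \LL(L^2(X),\S^n)$ is bounded and linear, it is weak-to-weak continuous, so $\Lambda w_k \to \Lambda \bar w$ in the finite-dimensional space $\S^n$; combined with the lower semicontinuity of $F_q$ on $\S^n$ (part of the standing assumptions on $F$, or the explicit convention $F_q \equiv +\infty$ off the positive-definite cone together with smoothness in the interior), this gives $F_q(\Lambda \bar w) \le \liminf_k F_q(\Lambda w_k)$. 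Adding the two estimates shows $\bar w$ is a minimizer. One should also note the minimum value is finite: the feasibility assumption guarantees some feasible $w$ with $F_q(\Lambda w) < \infty$, so the infimum is not $+\infty$, and it is bounded below since $F_q \ge 0$ and the regularization term is nonnegative.

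For uniqueness when $\alpha > 0$: the map $w \mapsto \frac{\alpha}{2}\norm{w}_{L^2(X)}^2$ is strictly convex, while $w \mapsto F_q(\Lambda w)$ is convex (composition of the convex $F_q$ with the linear $\Lambda$), so $J$ is strictly convex on the convex feasible set, whence the minimizer is unique. The main obstacle is the lower semicontinuity of $w \mapsto F_q(\Lambda w)$ along the minimizing sequence, i.e.\ handling the possibility that $\Lambda w_k$ approaches the boundary of the positive-definite cone where $F_q$ blows up; this is resolved cleanly because $\Lambda$ maps into the \emph{finite-dimensional} space $\S^n$, so weak convergence of $w_k$ yields norm convergence of $\Lambda w_k$, and the l.s.c.\ of $F_q$ on $\S^n$ (with the value $+\infty$ off the cone) does the rest — no compactness of $\Lambda$ or compact embedding is needed.
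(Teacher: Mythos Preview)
Your proof is correct and follows essentially the same route as the paper: the direct method via a minimizing sequence, weak sequential compactness of the bounded closed convex feasible set in $L^2(X)$, weak lower semicontinuity of the objective (using that $\Lambda$ lands in the finite-dimensional $\S^n$), and uniqueness for $\alpha>0$ from strict/uniform convexity. One small slip: the claim ``$F_q\ge 0$'' fails for the logarithmic D-criterion $q=0$, since $F_0(\FIM)=\tfrac{1}{n}\ln\det\FIM^{-1}$ can be negative; the correct (and equally easy) reason the infimum is finite is that $\Lambda(\Delta_C\cap\bounds)$ is a bounded subset of $\S^n$, so $F_q$ is bounded below there --- this is exactly how the paper argues it, and you already have all the ingredients for that.
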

\begin{proof}
	The set of attainable Fisher information matrices $\Lambda(\Delta_C \cap \bounds)$ is bounded since $\Delta_C \cap \bounds$ is bounded in $L^2(X)$ and $\Lambda \in\LL(L^2(X),\S^n)$ is continuous.
	It is easy to see that the objective in \eqref{eq:OED_regularized} is bounded below on $\Delta_C \cap \bounds$ since the set of .
	Consequently, a direct proof based on a minimizing sequence is applicable.
	The uniqueness of $w$ in case $\alpha > 0$ follows from the uniform convexity of the objective in this case.
\end{proof}

The structure of the paper is as follows.
In \cref{sec:optimality_conditions} we investigate necessary and sufficient optimality conditions for the family of convex problems \eqref{eq:OED_regularized}--\eqref{eq:bounded_in_L2}.
A discrete version of the problem is considered in \cref{sec:Discretization_DG0}.
In \cref{sec:numerical_results} we compare the performance of two first-order methods using two numerical experiments.
Specifically, we discuss FISTA (\cite{BeckTeboulle2009}) and the proximal extrapolated gradient method from \cite{Malitsky2017}.
In \cref{sec:simplicial_decomposition} we discuss the embedding of the methods considered into a simplicial decomposition acceleration, which features subproblems of reduced dimension and significantly improves the performance.

A result which may be of independent interest is an efficient algorithm for the orthogonal projection (w.r.t.\ an inner product given by $\diag(e_i)$) onto the restricted simplex 
\begin{equation*}
	\setDef[auto]{w \in \R^m}{0 \le w_i \le 1 \text{ for all } i = 1, \ldots, m \text{ and } \sum_{i=1}^m e_i \, w_i = C}
	.
\end{equation*}
The algorithm is given as \cref{alg:projection_feasible_set} and its correctness is proved in \cref{theorem:correctness_of_alg:projection_feasible_set}.

%------------------------------------------------------------------
\section{Optimality Conditions}
\label{sec:optimality_conditions}
%------------------------------------------------------------------

In this section we investigate necessary and sufficient optimality conditions for problem \eqref{eq:OED_regularized}--\eqref{eq:bounded_in_L2}, which can be equivalently cast in the following typical form for convex optimization problems:
\begin{equation}
	\label{eq:decomposition_into_F_G_H_primal}
	\text{Minimize} \quad F_q(\Lambda w) + G(w) + H_\alpha(w), \quad w \in L^2(X)
	.
\end{equation}
Here we set 
\begin{equation}
	\label{eq:definitions_of_G_H_primal}
	G(w) \coloneqq I_{\Delta_C \cap \bounds}(w)
	\quad \text{and} \quad
	H_\alpha(w) \coloneqq \frac{\alpha}{2} \norm{w}_{L^2(X)}^2
\end{equation}
and $I_{\Delta_C \cap \bounds}$ denotes the indicator function (in the sense of convex analysis) of the set $\Delta_C \cap \bounds \subset L^2(X)$.
We recall that we assume \eqref{eq:assumption_C} throughout.

Both $L^2(X)$ and $\S^n$, endowed with their inner products $\int_X w \, v \dx$ and $P \dprod Q \coloneqq \trace(P^\transp Q)$, respectively, are Hilbert spaces and we identify them with their duals via the respective Riesz isomorphisms.
Notice that $P \dprod P = \norm{P}_F^2$ holds, where $\norm{\cdot}_F$ is the Frobenius norm of matrices.

Before we present optimality conditions for \eqref{eq:OED_regularized}--\eqref{eq:bounded_in_L2}, we relate the $L^2(X)$-orthogonal projection $\proj{\Delta_C \cap \bounds}$ to the projection onto $\bounds$.
The latter is simply 
\begin{equation*}
	\proj{\bounds}{f} 
	= 
	\max \{ \min \{f,1 \}, 0 \}.
\end{equation*}
The following result shows that projection onto $\Delta_C \cap \bounds$ is obtained by projection onto $\bounds$ after an appropriate constant shift of the argument.

\begin{lemma}
	\label{lemma:projection}
	Let $f \in L^2(X)$.
	\begin{enumerate}[label=$(\alph*)$]
		\item 
			\label{item:projection_1}
			Suppose that $v = \proj{\Delta_C \cap \bounds}{f}$ holds.
			Then there exists $\zeta \in \R$ such that 
			\begin{equation}
				\label{eq:proj_Delta_bounds}
				v = \proj{\bounds}{f - \zeta} 
				\quad \text{and} \quad
				\int_X v \d x = C
			\end{equation}
			holds.

		\item 
			\label{item:projection_2}
			Conversely, suppose that $(v,\zeta) \in L^2(X) \times \R$ satisfies \eqref{eq:proj_Delta_bounds}, then $v = \proj{\Delta_C \cap \bounds}{f}$.
	\end{enumerate}
\end{lemma}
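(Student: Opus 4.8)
The plan is to read \eqref{eq:proj_Delta_bounds} as the optimality system of the projection problem $\min_{w \in \Delta_C \cap \bounds} \tfrac{1}{2}\norm{w-f}_{L^2(X)}^2$, in which the scalar $\zeta$ is a Lagrange multiplier for the single affine constraint $\int_X w \d x = C$: once $\zeta$ is fixed, the residual minimization over the box $\bounds$ decouples pointwise and is solved by $\proj{\bounds}{f-\zeta}$. It is cleanest to prove part~\ref{item:projection_2} first, by a short computation with variational inequalities, and then to deduce part~\ref{item:projection_1} from it together with an intermediate value argument that produces a suitable~$\zeta$.

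For part~\ref{item:projection_2}, suppose $(v,\zeta) \in L^2(X)\times\R$ satisfies \eqref{eq:proj_Delta_bounds}. Since $v = \proj{\bounds}{f-\zeta}$ and $\bounds$ is a nonempty closed convex subset of $L^2(X)$, the metric projection is characterized by $\langle v - (f-\zeta),\, u - v\rangle_{L^2(X)} \ge 0$ for all $u \in \bounds$. Restricting $u$ to the subset $\Delta_C \cap \bounds \subset \bounds$ and using $\int_X u \d x = C = \int_X v \d x$, the term $\zeta\,\langle 1, u-v\rangle_{L^2(X)} = \zeta\,(C-C) = 0$ drops out, and we obtain $\langle v - f,\, u-v\rangle_{L^2(X)} \ge 0$ for all $u \in \Delta_C \cap \bounds$. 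Together with $v \in \Delta_C \cap \bounds$ (indeed $v \in \bounds$ as a projection onto $\bounds$, and $\int_X v \d x = C$ by assumption), this is exactly the variational characterization of $v = \proj{\Delta_C \cap \bounds}{f}$.

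For part~\ref{item:projection_1}, I would first establish the existence of $\zeta$. Define $\phi(\zeta) \coloneqq \int_X \proj{\bounds}{f-\zeta} \d x = \int_X \max\{\min\{f-\zeta,1\},0\} \d x$. Pointwise, $\zeta \mapsto \max\{\min\{f(x)-\zeta,1\},0\}$ is continuous and non-increasing with values in $[0,1]$; since $X$ is compact we have $\abs{X} < \infty$, so dominated convergence shows that $\phi$ is continuous and non-increasing with $\phi(\zeta) \to \abs{X}$ as $\zeta \to -\infty$ and $\phi(\zeta) \to 0$ as $\zeta \to +\infty$. By assumption \eqref{eq:assumption_C} we have $0 < C < \abs{X}$, so the intermediate value theorem yields some $\zeta \in \R$ with $\phi(\zeta) = C$. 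Setting $\tilde v \coloneqq \proj{\bounds}{f-\zeta}$, the pair $(\tilde v,\zeta)$ satisfies \eqref{eq:proj_Delta_bounds}; hence by part~\ref{item:projection_2}, $\tilde v = \proj{\Delta_C \cap \bounds}{f} = v$ by uniqueness of the projection. Thus $v = \proj{\bounds}{f-\zeta}$ and $\int_X v \d x = C$, which is \eqref{eq:proj_Delta_bounds}.

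The argument is largely routine; the only points requiring care are the continuity and the two limiting values of $\phi$, handled by dominated convergence using the finiteness of $\abs{X}$, and the correct invocation of the variational-inequality characterization of metric projections onto closed convex subsets of a Hilbert space, which also uses $\Delta_C \cap \bounds \ne \emptyset$ — guaranteed by \eqref{eq:assumption_C}, e.g.\ the constant $w \equiv C/\abs{X}$ is admissible. I do not anticipate a genuine obstacle here.
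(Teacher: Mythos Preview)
Your proof is correct and follows essentially the same approach as the paper: both arguments hinge on the variational-inequality characterization of the projection together with an intermediate value argument producing the shift~$\zeta$. The only cosmetic differences are that you prove~\ref{item:projection_2} first and then invoke it for~\ref{item:projection_1}, and that your verification of the variational inequality uses the abstract characterization of $\proj{\bounds}$ (with the $\zeta$-term dropping out on $\Delta_C$) rather than the paper's explicit pointwise integral computation; both are equally valid.
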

\begin{proof}
	We utilize that $v = \proj{\Delta_C \cap \bounds}{f}$ is characterized by the necessary and sufficient optimality condition
	\begin{equation}
		\label{eq:projection_1}
		\tag{$*$}
		\inner{v - f}{w - v}_{L^2(X)} \ge 0 \quad \text{for all } w \in \Delta_C \cap \bounds
		,
	\end{equation}
	see for instance \cite[Ch.~II, eq.(2.17)]{EkelandTemam1999}.
	\Cref{item:projection_1}:
	Let us define 
	\begin{equation*}
		\R \ni \zeta \mapsto a(\zeta) \coloneqq \int_X  \proj{\bounds}{f - \zeta} \dx \in \R
		.
	\end{equation*}
	First we prove that there exists $\zeta \in \R$ such that the equation $a(\zeta) = C$ is solvable.
	Indeed, it is easy to see that $a(\zeta)$ is Lipschitz with constant $\abs{X}^{1/2}$ and monotone decreasing.
	Moreover, $\lim_{\zeta \to \infty} a(\zeta) = 0$ and $\lim_{\zeta \to -\infty} a(\zeta) = \abs{X}$ hold, and thus $a(\zeta) = C$ is solvable in view of assumption \eqref{eq:assumption_C}.

	Define $\bar v \coloneqq \proj{\bounds}{f - \zeta}$.
	In order to verify that $\bar v = v$ holds, it is necessary and sufficient to prove that \eqref{eq:projection_1} is verified with $v$ replaced by $\bar v$.
	To this end, $w \in \Delta_C \cap \bounds$ be arbitrary.
	Then
	\begin{equation*}
		\begin{aligned}
			\MoveEqLeft
			\inner[auto]{\bar v - f}{w - \bar v}_{L^2(X)}
			\\
			& 
			= 
			\inner[auto]{\proj{\bounds}{f - \zeta} - f}{w - \bar v}_{L^2(X)}
			\quad
			\text{by definition of $\bar v$}
			\\
			& 
			= 
			\inner[auto]{\proj{\bounds}{f - \zeta} - (f - \zeta)}{w - \bar v}_{L^2(X)}
			\quad
			\text{since $\int_X \bar v \dx = \int_X w \dx = C$}
			\\
			& 
			= 
			\int_{\mrep{\setDef{x \in X}{f - \zeta \le 0}}{\hspace{15mm}}} - (f - \zeta) \, (w-0) \dx 
			+ \int_{\mrep{\setDef{x \in X}{f - \zeta \ge 1}}{\hspace{15mm}}} (1- (f - \zeta)) \, (w-1) \dx 
			.
		\end{aligned}
	\end{equation*}
	Since $0 \le w \le 1$ holds, both integrands are non-negative, which proves the claim.

	\Cref{item:projection_2}:
	Suppose that $(v,\zeta) \in L^2(X) \times \R$ satisfies \eqref{eq:proj_Delta_bounds}.
	By the same argument as above (with $v$ in place of $\bar v$) we obtain that \eqref{eq:projection_1} holds.
\end{proof}

We are now in the position to prove necessary and sufficient optimality conditions for the problem \eqref{eq:OED_regularized}--\eqref{eq:bounded_in_L2}, or equivalently, \eqref{eq:decomposition_into_F_G_H_primal}--\eqref{eq:definitions_of_G_H_primal}.
Notice that the symbol $\nabla$ denotes the gradient w.r.t.\ the inner products in $L^2(X)$ and $\S^n$, respectively.

\begin{theorem}[Necessary and sufficient optimality conditions] \hfill
	\label{theorem:optimality_conditions}
	\begin{enumerate}[label=$(\alph*)$]
		\item 
			\label{item:optimality_conditions_1}
			$w \in L^2(X)$ solves \eqref{eq:decomposition_into_F_G_H_primal}--\eqref{eq:definitions_of_G_H_primal} if and only if 
			\begin{equation}
				0 \in \Lambda^* \nabla F_q(\Lambda w) + \alpha \, w  + \partial G(w)
				\label{eq:convex_optimality_conditions}
			\end{equation}
			holds.

		\item 
			\label{item:optimality_conditions_2}
			Suppose $w \in L^2(X)$ fulfills \eqref{eq:convex_optimality_conditions}.
			Then there exists $\zeta \in \R$ such that $(w,\zeta)$ fulfills
			\begin{subequations}
				\begin{align}
					&
					w = \proj[big]{\bounds}{(1-\alpha) w - \Lambda^* \nabla F_q(\Lambda w) - \zeta} \quad \text{a.e.\ in } X
					,
					\label{eq:alternative_optimality_conditions_w}
					\\
					&
					\int_X w \d x = C 
					.
					\label{eq:alternative_optimality_conditions_weight}
				\end{align}
				\label{eq:alternative_optimality_conditions}
			\end{subequations}
			Conversely if $(w,\zeta) \in L^2(X) \times \R$ fulfills \eqref{eq:alternative_optimality_conditions}, then $w$ solves \eqref{eq:decomposition_into_F_G_H_primal}--\eqref{eq:definitions_of_G_H_primal}. 

		\item 
			\label{item:optimality_conditions_4}
			Suppose that $\alpha = 0$ holds and that $w \in L^2(X)$ solves \eqref{eq:decomposition_into_F_G_H_primal}--\eqref{eq:definitions_of_G_H_primal}, i.e., $w$ fulfills \eqref{eq:convex_optimality_conditions}.
			Then there exists another optimal $\bar w \in L^2(X)$ and a measurable set $M \subset X$ such that 
			\begin{equation}
				\label{eq:bang_bang_unregularized}
				\bar w = 1 \quad \text{a.e.\ in $M$}, 
				\quad
				\bar w = 0 \quad \text{a.e.\ in $X \setminus M$}
				.
			\end{equation}
	\end{enumerate}
\end{theorem}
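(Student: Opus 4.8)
The three parts build on one another, so I would prove them in order. For part \ref{item:optimality_conditions_1}, the plan is to invoke the standard characterization of minimizers of a sum of a convex differentiable function and a convex lower semicontinuous function: $w$ minimizes \eqref{eq:decomposition_into_F_G_H_primal}--\eqref{eq:definitions_of_G_H_primal} if and only if $0 \in \partial\bigl(F_q\circ\Lambda + H_\alpha + G\bigr)(w)$. Since $F_q\circ\Lambda$ is convex and (on its domain) smooth with gradient $\Lambda^*\nabla F_q(\Lambda w)$ by the chain rule, and $H_\alpha$ is convex and smooth with gradient $\alpha w$, the subdifferential sum rule applies (the interior of $\dom G$ question is handled because $G$ is real-valued on a set meeting the domain of $F_q\circ\Lambda$ by the standing assumption that some feasible $w$ yields $F_q(\Lambda w)<\infty$, so Moreau--Rockafellar's condition for additivity of subdifferentials is satisfied). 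This yields \eqref{eq:convex_optimality_conditions}. The one subtlety is that $F_q\circ\Lambda$ is only finite-valued where $\Lambda w\succ 0$, so I would note that at a solution $w$ we necessarily have $\Lambda w\succ 0$ (otherwise the objective is $+\infty$, contradicting the standing feasibility assumption), hence $F_q$ is differentiable there.

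For part \ref{item:optimality_conditions_2}, I would rewrite \eqref{eq:convex_optimality_conditions} as $-\Lambda^*\nabla F_q(\Lambda w) - \alpha w \in \partial G(w) = \partial I_{\Delta_C\cap\bounds}(w) = N_{\Delta_C\cap\bounds}(w)$, the normal cone. The standard equivalence between a normal-cone inclusion and a fixed-point/projection identity gives, for any $t>0$, $w = \proj{\Delta_C\cap\bounds}\bigl(w + t(-\Lambda^*\nabla F_q(\Lambda w) - \alpha w)\bigr)$; choosing $t=1$ produces the argument $(1-\alpha)w - \Lambda^*\nabla F_q(\Lambda w)$. Now I apply \Cref{lemma:projection}\ref{item:projection_1} to $f \coloneqq (1-\alpha)w - \Lambda^*\nabla F_q(\Lambda w)$: there exists $\zeta\in\R$ with $w = \proj{\bounds}{f-\zeta}$ and $\int_X w\dx = C$, which is exactly \eqref{eq:alternative_optimality_conditions}. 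The converse runs the implications backwards: given $(w,\zeta)$ satisfying \eqref{eq:alternative_optimality_conditions}, \Cref{lemma:projection}\ref{item:projection_2} gives $w = \proj{\Delta_C\cap\bounds}{f}$, which via the projection/normal-cone equivalence returns \eqref{eq:convex_optimality_conditions}, hence optimality by part \ref{item:optimality_conditions_1}. The main thing to be careful about here is matching the step size so that the constant shift in \Cref{lemma:projection} absorbs both the multiplier for the equality constraint $\int_X w = C$ and the $\alpha w$ term cleanly; writing $(1-\alpha)w$ on the right-hand side is precisely what makes this work with $t=1$.

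Part \ref{item:optimality_conditions_4} is the substantive one and where I expect the real work. With $\alpha=0$, by \ref{item:optimality_conditions_2} the solution $w$ satisfies $w = \proj{\bounds}{w - \Lambda^*\nabla F_q(\Lambda w) - \zeta}$, so on the set where $w\in(0,1)$ we must have $w - \Lambda^*\nabla F_q(\Lambda w) - \zeta = w$, i.e.\ the "switching function" $s(x) \coloneqq (\Lambda^*\nabla F_q(\Lambda w))(x) + \zeta = \nabla F_q(\Lambda w)\dprod\Upsilon(x) + \zeta$ vanishes there; where $w=0$ we have $s\ge 0$ and where $w=1$ we have $s\le 0$. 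The idea is that the objective $F_q(\Lambda w)$ depends on $w$ only through the matrix $\Lambda w = \int_X \Upsilon(x)w(x)\dx\in\S^n$, so we are free to redistribute mass on the "singular set" $S \coloneqq \{x : s(x)=0\}$ without changing $\Lambda w$ — hence without changing the objective, and without violating optimality — as long as we keep the total mass equal to $C$. Concretely, let $m_0 \coloneqq \int_{\{w=1\}}1\dx$ be the mass already at the upper bound; since $\int_X w = C$, the remaining mass $C - m_0$ must be placed inside $S$. I would then construct a bang-bang $\bar w$ agreeing with $w$ off $S$ and equal to an indicator $\chi_{M'}$ of some measurable $M'\subseteq S$ with $\abs{M'} = C - m_0$; such $M'$ with prescribed measure exists by non-atomicity of Lebesgue measure (or a Lyapunov-convexity / intermediate-value argument on $t\mapsto \abs{S\cap\{x_1\le t\}}$). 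The delicate point is that redistributing mass within $S$ generally does change $\Lambda w$, because $\Upsilon$ is not constant on $S$ — so a naive indicator will not keep $\Lambda\bar w = \Lambda w$ and hence will not preserve optimality or even feasibility of the objective's positive-definiteness. The correct route, and the main obstacle, is a Lyapunov-type convexity argument: the set of attainable matrices $\bigl\{\int_S \Upsilon(x)u(x)\dx : u\in L^\infty(S),\ 0\le u\le 1,\ \int_S u = C-m_0\bigr\}$ is convex (Lyapunov's theorem on the range of non-atomic vector measures, applied in the finite-dimensional space $\S^n\times\R$), and its set of extreme points consists exactly of bang-bang controls $u=\chi_{M'}$; since the current $w|_S$ yields the point $\int_S\Upsilon(x)w(x)\dx$ in this convex set, by Choquet/Krein--Milman together with the finite dimension one can realize the same point as a bang-bang control, giving $\bar w$ with $\Lambda\bar w = \Lambda w$ and $\int_X\bar w = C$. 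Then $\bar w$ is feasible, attains the same objective value, and is therefore also optimal; setting $M \coloneqq \{w=1\}\cup M'$ gives \eqref{eq:bang_bang_unregularized}. I would cite \cite[Thm.~4.3.1]{FedorovHackl1997:1} for the analogous finite-dimensional bang-bang result and adapt the Lyapunov argument to the present $L^2$ setting.
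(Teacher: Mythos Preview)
Your arguments for parts~\ref{item:optimality_conditions_1} and~\ref{item:optimality_conditions_2} are essentially identical to the paper's: the paper also reduces \eqref{eq:convex_optimality_conditions} to the resolvent identity $w=(\id+\partial G)^{-1}\bigl((1-\alpha)w-\Lambda^*\nabla F_q(\Lambda w)\bigr)=\proj{\Delta_C\cap\bounds}{\cdot}$ and then invokes \Cref{lemma:projection} in both directions.

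For part~\ref{item:optimality_conditions_4} the paper gives no argument at all; it simply cites \cite[Thm.~1]{Wynn1982}. Your Lyapunov route is the standard way to prove such bang--bang statements and is almost certainly what underlies that reference, so the approach is fine. Two remarks, though. First, the step ``its set of extreme points consists exactly of bang--bang controls \ldots\ by Choquet/Krein--Milman one can realize the same point as a bang--bang control'' is not quite right as stated: Krein--Milman applied to the \emph{image} set in $\S^n\times\R$ only tells you the target point is a convex combination of extreme points, not that it equals one. The clean fix is either to invoke Lyapunov's theorem in the form ``$\{\int_A(\Upsilon,1)\,\dx:A\text{ measurable}\}=\{\int(\Upsilon,1)u\,\dx:0\le u\le 1\}$'' directly, or to apply Krein--Milman to the \emph{preimage} set $\{u\in L^\infty:0\le u\le 1,\ \int(\Upsilon,1)u=(\Lambda w,C)\}$, which is weak-$*$ compact, convex and nonempty, and whose extreme points are characteristic functions by the usual non-atomicity argument. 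Second, the detour through the singular set $S$ and the decomposition $C=m_0+(C-m_0)$ is unnecessary: applying Lyapunov to the non-atomic vector measure $A\mapsto(\int_A\Upsilon\,\dx,\abs{A})$ on all of $X$ immediately yields a measurable $M\subseteq X$ with $\Lambda\chi_M=\Lambda w$ and $\abs{M}=C$, so $\bar w=\chi_M$ is feasible with the same objective value. The optimality conditions from part~\ref{item:optimality_conditions_2} play no role in this argument.
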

We refer to $\bar w$ as in \eqref{eq:bang_bang_unregularized} as a solution of bang-bang type.
\begin{proof}
	\Cref{item:optimality_conditions_1}:
	It is clear that $w \in L^2(X)$ solves \eqref{eq:decomposition_into_F_G_H_primal} if and only if $0 \in \partial(F(\Lambda w) + (G+H_\alpha)(w))$ holds; see for instance \cite[Thm.~2.5.7]{Zalinescu2002}.
	Since we assumed that there exists $w \in \Delta_C \cap \bounds$ with $F_q(\Lambda w) < \infty$, and since $(F_q \circ \Lambda)(w)$ is continuous at every $w \in L^2(X)$ with $\Lambda w \succ 0$, we can apply \cite[Thm.~2.8.7~(iii)]{Zalinescu2002} and \cite[Ch.~I, Prop.~5.6]{EkelandTemam1999} to obtain that the necessary and sufficient optimality conditions of \eqref{eq:decomposition_into_F_G_H_primal} 
	They can be written as
	\begin{equation*}
		0 \in \Lambda^* \nabla F_q(\Lambda w) + \nabla H_\alpha (w) + \partial G(w) = \Lambda^* \nabla F_q(\Lambda w) + \alpha\,w + \partial G(w)
		,
	\end{equation*}
which is \eqref{eq:convex_optimality_conditions}.

	\Cref{item:optimality_conditions_2}:
	We prove this statement by virtue of the following:
	\begin{align}
		&
		0 \in \Lambda^* \nabla F_q(\Lambda w) + \alpha \, w  + \partial G(w)
		\notag
		\tag{\ref{eq:convex_optimality_conditions}}
		\\
		\Leftrightarrow
		\quad
		&
		(1-\alpha) w - \Lambda^* \nabla F_q(\Lambda w) \in \paren[big](){\id + \partial G}(w)
		\tag{$*$}
		\label{eq:optimality_conditions_2_proof1}
		\\
		\Leftrightarrow
		\quad
		&
		w = \paren[big](){\id +  \partial G}^{-1} \paren[big](){(1-\alpha) w - \Lambda^* \nabla F_q(\Lambda w)}
		\tag{$**$}
		\label{eq:optimality_conditions_2_proof2}
		\\
		\Leftrightarrow
		\quad
		&
		w = \proj[big]{\Delta_C \cap \bounds}{(1-\alpha) w - \Lambda^* \nabla F_q(\Lambda w)}
		.
		\tag{$**$$*$}
		\label{eq:optimality_conditions_2_proof3}
	\end{align}
	The equivalence of \eqref{eq:convex_optimality_conditions} and \eqref{eq:optimality_conditions_2_proof1} is clear.
	The equivalence of \eqref{eq:optimality_conditions_2_proof1} and \eqref{eq:optimality_conditions_2_proof2} follows since $\partial G$ is maximally monotone; see for instance \cite{Minty1964,Rockafellar1976}.
	The equivalence of \eqref{eq:optimality_conditions_2_proof2} and \eqref{eq:optimality_conditions_2_proof3} follows since $G$ is the indicator function of $\Delta_C \cap \bounds$.

	Now if $w \in L^2(X)$ fulfills \eqref{eq:convex_optimality_conditions} and thus \eqref{eq:optimality_conditions_2_proof3}, then \cref{lemma:projection}~\cref{item:optimality_conditions_1} implies the existence of $\zeta$ such that \eqref{eq:alternative_optimality_conditions} holds.
	Conversely, if $(w,\zeta)\in L^2(X) \times \R$ fulfills \eqref{eq:alternative_optimality_conditions}, then \cref{lemma:projection}~\cref{item:optimality_conditions_2} shows that $w$ also solves \eqref{eq:optimality_conditions_2_proof3} and thus also \eqref{eq:convex_optimality_conditions}.

	\Cref{item:optimality_conditions_4}:
	The statement follows directly from \cite[Thm.~1]{Wynn1982}.
\end{proof}

We continue to derive further properties following from the optimality conditions in \cref{theorem:optimality_conditions}.
The following corollary provides a pointwise relation between an optimal $w \in L^2(X)$ and $\Lambda^* \nabla F_q(\Lambda w) \in L^2(X)$.
This allows us to define, on the one hand, a convenient stopping criterion.
On the other hand, it can be used to deduce a certain spatial regularity of an optimal weight~$w$, provided that $\Upsilon$ is more regular than $L^2(X;\S^n)$ and belongs, e.g., to the Hölder class $C^{0,\lambda}$ or Sobolev class $W^{1,p}$.

\begin{corollary}[Further properties of optimal solutions] \hfill
	\label{corollary:regularity_of_w}
	\begin{enumerate}[label=$(\alph*)$]
		\item 
			\label{item:regularity_of_w_1}
			Condition \eqref{eq:alternative_optimality_conditions_w} for  $(w,\zeta)$ is equivalent to the existence of a partion $X_0 \cupdot X_{01} \cupdot X_1 = X$ such that
			\begin{equation}
				\label{eq:w_pointwise}
				\left\{
					\begin{alignedat}{3}
						w(x) = 0 
						\text{ and }
						&
						-\paren[auto](){\Lambda^* \nabla F_q(\Lambda w)}(x) 
						&&
						\le   \zeta 
						&&
						\quad
						\text{a.e.\ in } X_0
						\\
						0 < {}
						w(x) < 1
						\text{ and }
						&
						-\paren[auto](){\Lambda^* \nabla F_q(\Lambda w)}(x) - \alpha\, w(x) 
						&&
						= \zeta
						&&
						\quad
						\text{a.e.\ in } X_{01}
						\\
						w(x) =  1
						\text{ and }
						&
						-\paren[auto](){\Lambda^* \nabla F_q(\Lambda w)}(x) - \alpha 
						&&
						\ge \zeta
						&&
						\quad
						\text{a.e.\ in } X_1
					\end{alignedat}
				\right\}
			\end{equation}

		\item 
			\label{item:regularity_of_w_2}
			Suppose $\alpha > 0$ and $w \in L^2(X)$ fulfills \eqref{eq:convex_optimality_conditions}.
			Then there exists $\zeta \in \R$ such that $(w,\zeta)$ fulfills
				\begin{subequations}
					\begin{align}
						&
						w = \proj[auto]{\bounds}{-\alpha^{-1} \Lambda^* \nabla F_q(\Lambda w) - \zeta} \quad \text{a.e.\ in } X
						,
						\label{eq:alternative_optimality_conditions_regularized_w}
						\\
						&
						\int_X w \d x = C 
						.
						\label{eq:alternative_optimality_conditions_regularized_weight}
					\end{align}
					\label{eq:alternative_optimality_conditions_regularized}
				\end{subequations}
				Conversely if $(w,\zeta) \in L^2(X) \times \R$ fulfills \eqref{eq:alternative_optimality_conditions}, then $w$ solves \eqref{eq:decomposition_into_F_G_H_primal}--\eqref{eq:definitions_of_G_H_primal}. 

		\item 
			\label{item:regularity_of_w_3}
			Suppose $\alpha > 0$ and $\Upsilon \in C^{0,\lambda}(X;\S^n)$ for some $\lambda \in [0,1]$.
			Then the optimal solution \eqref{eq:decomposition_into_F_G_H_primal}--\eqref{eq:definitions_of_G_H_primal} satisfies $w \in C^{0,\lambda}(X)$. 

		\item 
			\label{item:regularity_of_w_4}
			Suppose $\alpha > 0$ and $\Upsilon \in W^{1,p}(X;\S^n)$ for some $p \in [1,\infty]$.
			Then the optimal solution \eqref{eq:decomposition_into_F_G_H_primal}--\eqref{eq:definitions_of_G_H_primal} satisfies $w \in W^{1,p}(X)$.
	\end{enumerate}
\end{corollary}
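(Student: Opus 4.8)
\emph{Part~\cref{item:regularity_of_w_1}.}
The plan is to read the fixed-point identity \eqref{eq:alternative_optimality_conditions_w} pointwise.
Writing $g \coloneqq \Lambda^* \nabla F_q(\Lambda w) \in L^2(X)$ and recalling that $\proj{\bounds}{\cdot}$ acts pointwise as the scalar projection onto $[0,1]$, i.e.\ $\proj{\bounds}{f}(x) = \max\{\min\{f(x),1\},0\}$, condition \eqref{eq:alternative_optimality_conditions_w} is equivalent to $w(x) = \max\{\min\{(1-\alpha)\,w(x) - g(x) - \zeta,\,1\},\,0\}$ for a.e.\ $x \in X$.
I would then set $X_0 \coloneqq \{w = 0\}$, $X_{01} \coloneqq \{0 < w < 1\}$ and $X_1 \coloneqq \{w = 1\}$, which are measurable and, by \eqref{eq:bounded_in_L2}, partition $X$ up to a null set, and distinguish the three cases: on $X_0$ the scalar identity forces the clipping argument to be $\le 0$, i.e.\ $-g(x) \le \zeta$; on $X_1$ it forces it to be $\ge 1$, i.e.\ $(1-\alpha) - g(x) - \zeta \ge 1$, which is $-g(x) - \alpha \ge \zeta$; and on $X_{01}$ the clipping is inactive, so $w(x) = (1-\alpha)\,w(x) - g(x) - \zeta$, i.e.\ $-g(x) - \alpha\,w(x) = \zeta$.
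This is precisely \eqref{eq:w_pointwise}.
The converse direction merely reassembles the cases: a partition with the stated relations yields $w(x) = \proj{\bounds}{(1-\alpha)\,w(x) - g(x) - \zeta}$ a.e.

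\emph{Part~\cref{item:regularity_of_w_2}.}
I would obtain this directly from \eqref{eq:convex_optimality_conditions}.
Dividing $0 \in \Lambda^* \nabla F_q(\Lambda w) + \alpha\,w + \partial G(w)$ by $\alpha > 0$ and using that $G = I_{\Delta_C \cap \bounds}$ is an indicator function, so that $\partial G(w)$ is the (scale-invariant) normal cone and hence $\alpha^{-1}\partial G(w) = \partial G(w)$, one arrives at $-\alpha^{-1}\Lambda^* \nabla F_q(\Lambda w) \in (\id + \partial G)(w)$, that is, $w = \proj{\Delta_C \cap \bounds}{-\alpha^{-1}\Lambda^* \nabla F_q(\Lambda w)}$, exactly the form \eqref{eq:optimality_conditions_2_proof3} appearing in the proof of \cref{theorem:optimality_conditions}.
\Cref{lemma:projection}~\cref{item:projection_1} then provides $\zeta$ such that \eqref{eq:alternative_optimality_conditions_regularized} holds, and \cref{lemma:projection}~\cref{item:projection_2} gives the converse.
(Equivalently, one may rescale $\zeta \mapsto \zeta/\alpha$ in \cref{item:regularity_of_w_1} and read the same formula off the three cases.)

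\emph{Parts~\cref{item:regularity_of_w_3} and \cref{item:regularity_of_w_4}.}
The crucial observation is that \eqref{eq:alternative_optimality_conditions_regularized_w} is not genuinely circular in the variable $x$: since the problem has a feasible point with finite criterion value, at the optimum (which is unique since $\alpha > 0$, by \cref{prop:existence}) one has $F_q(\Lambda w) < \infty$ and thus $\Lambda w \succ 0$; hence $P \coloneqq \nabla F_q(\Lambda w) \in \S^n$ is a \emph{fixed} matrix, and by \eqref{eq:synthesis_operator_with_density_Hilbert_adjoint} we have $(\Lambda^* \nabla F_q(\Lambda w))(x) = P \dprod \Upsilon(x)$.
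Being a fixed linear functional of $\Upsilon$, the map $x \mapsto P \dprod \Upsilon(x)$ inherits the regularity of $\Upsilon$: it lies in $C^{0,\lambda}(X)$, respectively $W^{1,p}(X)$, with seminorm bounded by $\norm{P}_F$ times that of $\Upsilon$, and the same is then true of the affine function $x \mapsto -\alpha^{-1}(P \dprod \Upsilon(x)) - \zeta$.
By \cref{item:regularity_of_w_2}, $w$ agrees a.e.\ with the composition of this function with the scalar map $\Phi \coloneqq \max\{\min\{\cdot,1\},0\}$, which is globally Lipschitz with constant $1$.
For \cref{item:regularity_of_w_3} I would use that composing with a Lipschitz map preserves $C^{0,\lambda}$ (same exponent, no larger seminorm); for \cref{item:regularity_of_w_4} I would invoke the chain rule for the composition of a $W^{1,p}(X)$ function $u$ with a globally Lipschitz $\Phi$, namely $\Phi \circ u \in W^{1,p}(X)$ (here using that $X$ is bounded and $\Phi$ bounded, so $\Phi \circ u \in L^p$) with $\nabla(\Phi \circ u) = \Phi'(u)\,\nabla u$ a.e., so that $\norm{\nabla w}_{L^p(X)} \le \alpha^{-1}\norm{P}_F\,\norm{\nabla \Upsilon}_{L^p(X)}$.

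The one step that requires care, rather than bookkeeping, is this last chain rule for Sobolev functions composed with globally Lipschitz (here piecewise affine) maps; it is classical and I would simply cite a standard reference on Sobolev spaces. The remainder is a direct translation of the projection formula \eqref{eq:alternative_optimality_conditions_regularized_w} into pointwise statements.
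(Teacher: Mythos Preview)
Your proposal is correct and follows essentially the same route as the paper. For part~\cref{item:regularity_of_w_1} both read \eqref{eq:alternative_optimality_conditions_w} pointwise on the natural partition $\{w=0\} \cupdot \{0<w<1\} \cupdot \{w=1\}$; for part~\cref{item:regularity_of_w_2} both divide \eqref{eq:convex_optimality_conditions} by $\alpha$, use scale invariance of the indicator (equivalently, that $\partial G$ is a cone), and invoke \cref{lemma:projection}; and for parts~\cref{item:regularity_of_w_3}--\cref{item:regularity_of_w_4} both observe that $\nabla F_q(\Lambda w)$ is a fixed matrix so that $x \mapsto P \dprod \Upsilon(x)$ inherits the regularity of $\Upsilon$, and then use that the pointwise clipping is $1$-Lipschitz. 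The Sobolev chain rule you plan to cite is precisely what the paper invokes under the name \emph{Stampacchia lemma} (referring to \cite[Theorem~A.1]{KinderlehrerStampacchia1980}).
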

\begin{proof}
	\Cref{item:regularity_of_w_1}:
	First we consider that $w \in L^2(X)$ fulfills \eqref{eq:alternative_optimality_conditions_w}.
	Clearly $w \in \bounds$ implies the existence of a partition of~$X$ such that
	\begin{equation*}
		\begin{aligned}
			X_0 
			&
			=
			\setDef{x \in X}{w(x) = 0 \text{ a.e.}},
			\\
			X_{01} 
			&
			=
			\setDef{x \in X}{0 < w(x) < 1 \text{ a.e.}},
			\\
			X_1 
			&
			=
			\setDef{x \in X}{w(x) = 1 \text{ a.e.}}.
		\end{aligned}
	\end{equation*}
	It remains to prove the claims for $\paren[auto](){\Lambda^* \nabla F_q (\Lambda w)}(x)$ in \eqref{eq:w_pointwise} on each of these subsets of $X$.

	For $x \in X_0$, \eqref{eq:alternative_optimality_conditions_w} implies
	\begin{alignat*}{2}
		&
		&
		0 
		& 
		\ge
		(1-\alpha)\,w(x) - \paren[auto](){\Lambda^* \nabla F_q (\Lambda w)}(x) - \zeta
		\\
		\Leftrightarrow
		&
		\quad
		&
		0 
		& 
		\ge
		- \paren[auto](){\Lambda^* \nabla F_q (\Lambda w)}(x) - \zeta
		\\
		\Leftrightarrow
		&
		\quad
		&
		\zeta 
		& 
		\ge
		- \paren[auto](){\Lambda^* \nabla F_q (\Lambda w)}(x)
	\end{alignat*}
	For $x \in X_{01}$, we have $0 < w(x) < 1$ and \eqref{eq:alternative_optimality_conditions_w} implies
	\begin{alignat*}{2}
		&
		&
		w(x) 
		& 
		=
		(1-\alpha)\,w(x) - \paren[auto](){\Lambda^* \nabla F_q (\Lambda w)}(x) - \zeta
		\\
		\Leftrightarrow
		&
		\quad
		&
		\zeta
		& 
		=
		- \paren[auto](){\Lambda^* \nabla F_q (\Lambda w)}(x) -  \alpha\,w(x)
		.
	\end{alignat*}
	For $x \in X_1$, the proof is analogous to the case $x \in X_0$.

	Now conversly we assume that a partition of~$X$ exists which fulfills \eqref{eq:w_pointwise}.
	By plugging in the corresponding relations for each subset of this partition, \eqref{eq:alternative_optimality_conditions_w} can be checked easily.

	\Cref{item:regularity_of_w_2}:
	Similarily to the proof of \cref{theorem:optimality_conditions} \cref{item:optimality_conditions_2} we prove this statement by virtue of the following:
	\begin{align*}
		&
		0 \in \Lambda^* \nabla F_q(\Lambda w) + \alpha \, w  + \partial G(w)
		\tag{\ref{eq:convex_optimality_conditions}}
		\\
		\Leftrightarrow
		\quad
		&
		\alpha^{-1} \Lambda^* \nabla F_q(\Lambda w) \in \paren[big](){\id + \alpha^{-1}\partial G}(w)
		\\
		\Leftrightarrow
		\quad
		&
		w = \paren[big](){\id +  \alpha^{-1} \partial G}^{-1} \paren[big](){\alpha^{-1} \Lambda^* \nabla F_q(\Lambda w)}
		\\
		\Leftrightarrow
		\quad
		&
		w = \proj[auto]{\Delta_C \cap \bounds}{\alpha^{-1} \Lambda^* \nabla F_q(\Lambda w)}
		.
	\end{align*}
	The proof is concluded by the same arguments as in the proof of \cref{theorem:optimality_conditions} \cref{item:optimality_conditions_2} and the observation that the indicator function $G$ is invariant under scaling.

	\Cref{item:regularity_of_w_3}:
	Recalling \eqref{eq:alternative_optimality_conditions_regularized_w} and the representation \eqref{eq:synthesis_operator_with_density_Hilbert_adjoint} of $\Lambda^*$, we have 
	\begin{equation*}
		w 
		=
		\proj[auto]{\bounds}{-\alpha^{-1} \Lambda^* \nabla F_q(\Lambda w) - \zeta} \quad \text{a.e.\ in } X
		.
	\end{equation*}
	Suppose now that $\Upsilon \in C^{0,\lambda}(X;\S^n)$ holds, then $- \nabla F_q(\Lambda w) \colon \Upsilon(x)$ belongs to $C^{0,\lambda}(X;\R)$.
	Since $\proj{\bounds}{\cdot}$ preserves the class of Hölder continuous functions, the claim follows.

	\Cref{item:regularity_of_w_4}:
	Similarly as in \cref{item:regularity_of_w_3}, this result follows from the Stampacchia lemma; see for instance \cite[Theorem~A.1]{KinderlehrerStampacchia1980}.
\end{proof}

Notice that \eqref{eq:w_pointwise} implies the \emph{sparsity} of the optimal weight $w$.
This property is reminiscent of optimal control problems with sparsity promoting $L^1$-norm objectives; see for instance \cite[Section~2]{Stadler2007:1} and \cite[Remark~3.3]{CasasHerzogWachsmuth2010:1}. 

For practical purposes, we also discuss a relaxed version of the optimality conditions \eqref{eq:convex_optimality_conditions}.
\begin{lemma}[Relaxed optimality conditions]
	\label{lemma:relaxed_optimality_conditions}
	Let $\varepsilon > 0$.
	The following statements are equivalent:
	\begin{enumerate}[label=$(\alph*)$]
		\item 
			\label{item:relaxed_optimality_conditions_1}
			$w \in L^2(X)$ solves the relaxed optimality conditions 
			\begin{equation}
				y \in \Lambda^* \nabla F_q (\Lambda w ) + \alpha\,w + \partial G(w) 
				\label{eq:convex_optimality_conditions_relaxed}
			\end{equation}
			with some $y \in L^{\infty}(X)$ satisfying $\norm{y}_{L^\infty(X)} \le \varepsilon$.
		\item 
			\label{item:relaxed_optimality_conditions_2}
			There exists $w \in \Delta_C$ and $\zeta \in \R$ as well as a partition $X_0 \cupdot X_{01} \cupdot X_1 = X$  such that
			% \begin{equation}
			%   \label{eq:w_pointwise_relaxed_estimated}
			%   \left\{
			%     \begin{alignedat}{4}
			%       w(x) = 0 
			%       &
			%       \text{ and }
			%       &
			%       &
			%       -\paren[auto](){\Lambda^* \nabla F_q(\Lambda w)}(x) 
			%       &&
			%       \le \zeta +\varepsilon
			%       &&
			%       \quad
			%       \text{a.e.\ in } X_0
			%       \\
			%       0 < {}
			%       w(x) 
			%       < 1
			%       &
			%       \text{ and }
			%       &
			%       \zeta  - \varepsilon 
			%       \le
			%       &
			%       -\paren[auto](){\Lambda^* \nabla F_q(\Lambda w)}(x) - \alpha\, w(x)
			%       &&
			%       \le \zeta+ \varepsilon
			%       &&
			%       \quad
			%       \text{a.e.\ in } X_{01}
			%       \\
			%       w(x) =  1
			%       &
			%       \text{ and}
			%       &
			%       \zeta  -\varepsilon 
			%       \le
			%       &
			%       -\paren[auto](){\Lambda^* \nabla F_q(\Lambda w)}(x) - \alpha
			%       &&
			%       &&
			%       \quad
			%       \text{a.e.\ in } X_1
			%     \end{alignedat}
			%   \right\}
			% \end{equation}
			\begin{equation}
				\label{eq:w_pointwise_relaxed_estimated}
				\left\{
					\begin{alignedat}{4}
						w(x) =  {}
						& 0 \text{ and }
						&&
						&&
						&&
						\multirow{2}{*}{$\;\;\text{a.e.\ in } X_0$}
						\\
						&
						&&
						-\paren[auto](){\Lambda^* \nabla F_q(\Lambda w)}(x) 
						&&
						\le \zeta +\varepsilon
						\\
						0 < w(x) < {}
						& 
						1 \text{ and }
						&&
						&&
						&&
						\multirow{2}{*}{$\;\;\text{a.e.\ in } X_{01}$}
						\\
						&
						\zeta  - \varepsilon 
						\le
						&&
						-\paren[auto](){\Lambda^* \nabla F_q(\Lambda w)}(x) - \alpha\, w(x)
						&&
						\le \zeta+ \varepsilon
						\\
						w(x) = {} 
						& 
						1	\text{ and}
						&&
						&&
						&&
						\multirow{2}{*}{$\;\;\text{a.e.\ in } X_{1}$}
						\\
						&
						\zeta  -\varepsilon 
						\le
						&&
						-\paren[auto](){\Lambda^* \nabla F_q(\Lambda w)}(x) - \alpha
					\end{alignedat}
				\right\}
			\end{equation}

	\end{enumerate}
\end{lemma}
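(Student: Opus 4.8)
The plan is to mimic the proof of \cref{theorem:optimality_conditions}~\cref{item:optimality_conditions_2} together with the pointwise analysis of \cref{corollary:regularity_of_w}~\cref{item:regularity_of_w_1}, but now carrying the perturbation $y$ along explicitly; throughout I abbreviate $g \coloneqq -\Lambda^* \nabla F_q(\Lambda w)$. Exactly as in the proof of \cref{theorem:optimality_conditions}~\cref{item:optimality_conditions_2} --- using that $\partial G$ is maximally monotone and that $(\id + \partial G)^{-1} = \proj{\Delta_C \cap \bounds}{\cdot}$ since $G$ is the indicator of $\Delta_C \cap \bounds$ --- I would rewrite \eqref{eq:convex_optimality_conditions_relaxed} as
\begin{equation*}
	y \in \Lambda^* \nabla F_q(\Lambda w) + \alpha\,w + \partial G(w)
	\quad \Leftrightarrow \quad
	w = \proj[big]{\Delta_C \cap \bounds}{(1-\alpha)\,w + y - \Lambda^* \nabla F_q(\Lambda w)} .
\end{equation*}
Applying \cref{lemma:projection} with $f \coloneqq (1-\alpha)\,w + y - \Lambda^* \nabla F_q(\Lambda w)$ then shows that \cref{item:relaxed_optimality_conditions_1} is equivalent to the existence of $\zeta \in \R$ with $w \in \Delta_C \cap \bounds$, $\int_X w \dx = C$ and $w = \proj{\bounds}{(1-\alpha)\,w + y - \Lambda^* \nabla F_q(\Lambda w) - \zeta}$ a.e.\ in $X$.

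\emph{Pointwise evaluation.}
Using $\proj{\bounds}{f} = \max\{\min\{f,1\},0\}$ and the measurable partition $X_0 \coloneqq \{w = 0\}$, $X_{01} \coloneqq \{0 < w < 1\}$, $X_1 \coloneqq \{w = 1\}$ (up to null sets), a case distinction on the value of $w(x)$ shows that the projection identity above is equivalent to
\begin{equation*}
	\begin{cases}
		g(x) + y(x) - \zeta \le 0, & \text{a.e.\ in } X_0, \\
		g(x) + y(x) - \alpha\,w(x) - \zeta = 0, & \text{a.e.\ in } X_{01}, \\
		g(x) + y(x) - \alpha - \zeta \ge 0, & \text{a.e.\ in } X_1.
	\end{cases}
\end{equation*}

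\emph{The two implications.}
For \cref{item:relaxed_optimality_conditions_1} $\Rightarrow$ \cref{item:relaxed_optimality_conditions_2} I insert $\abs{y} \le \varepsilon$ into these three relations: on $X_0$ this yields $g(x) \le \zeta + \varepsilon$, on $X_1$ it yields $g(x) - \alpha \ge \zeta - \varepsilon$, and on $X_{01}$ the equality gives $\zeta - \varepsilon \le g(x) - \alpha\,w(x) \le \zeta + \varepsilon$; recalling $g = -\Lambda^* \nabla F_q(\Lambda w)$ this is exactly \eqref{eq:w_pointwise_relaxed_estimated}, and $w \in \Delta_C$ was already obtained above. Conversely, given $w$, $\zeta$ and the partition as in \cref{item:relaxed_optimality_conditions_2} --- note that the partition forces $0 \le w \le 1$, hence $w \in \Delta_C \cap \bounds$ --- I set $y \coloneqq -\varepsilon$ on $X_0$, $y \coloneqq \varepsilon$ on $X_1$, and $y \coloneqq \zeta + \alpha\,w - g$ on $X_{01}$. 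The three estimates in \eqref{eq:w_pointwise_relaxed_estimated} then give $\abs{y} \le \varepsilon$ a.e., so $y \in L^\infty(X)$ with $\norm{y}_{L^\infty(X)} \le \varepsilon$, and substituting this $y$ verifies the three relations of the case distinction; together with $\int_X w \dx = C$ and \cref{lemma:projection}~\cref{item:projection_2}, reversing the equivalences delivers \eqref{eq:convex_optimality_conditions_relaxed}.

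\emph{Main obstacle.}
Everything reduces to elementary scalar inequalities once the projection is written out; the delicate point is the construction of $y$ on $X_0$ and $X_1$ in the converse implication, where the clipping in $\proj{\bounds}{\cdot}$ is active and the projection identity is only a one-sided inequality. One must check that the extreme choices $y = \mp\varepsilon$ are simultaneously admissible ($\abs{y} \le \varepsilon$) and large enough, in the correct direction, to enforce the clipping --- which is precisely what the one-sided $\pm\varepsilon$ margins in \eqref{eq:w_pointwise_relaxed_estimated} provide. A minor additional point is the measurability and essential boundedness of the piecewise-defined $y$, both immediate since $w$ is measurable and $\Lambda^* \nabla F_q(\Lambda w) \in L^2(X)$.
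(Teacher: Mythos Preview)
Your proposal is correct and follows essentially the same route as the paper: both reduce \eqref{eq:convex_optimality_conditions_relaxed} to the shifted projection $w = \proj{\bounds}{(1-\alpha)w + y - \Lambda^* \nabla F_q(\Lambda w) - \zeta}$ via \cref{lemma:projection}, carry out the same three-case pointwise analysis, and in the converse direction construct $y$ by the same piecewise recipe ($-\varepsilon$ on $X_0$, $\varepsilon$ on $X_1$, the exact defect on $X_{01}$). Your formula $y = \zeta + \alpha w - g$ on $X_{01}$ is in fact the correct sign needed to satisfy the equality there.
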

\begin{proof}
	Similarly as in the proof of \cref{theorem:optimality_conditions} \cref{item:optimality_conditions_2} one can show that if $w \in L^2(X)$ fulfills \eqref{eq:convex_optimality_conditions_relaxed}, then there exists $\zeta \in \R$ such that $(w,\zeta)$ verifies
	\begin{subequations}
		\begin{align}
			&
			w = \proj[auto]{\bounds}{(1-\alpha) w  + y - \Lambda^* \nabla F_q(\Lambda w) - \zeta} \quad \text{a.e.\ in } X
			,
			\label{eq:alternative_optimality_conditions_relaxed_w}
			\\
			&
			\int_X w \d x = C 
			.
			\label{eq:alternative_optimality_conditions_relaxed_weight}
		\end{align}
		\label{eq:alternative_optimality_conditions_relaxed}
	\end{subequations}
	Conversely, if $(w,\zeta)$ fulfills \eqref{eq:alternative_optimality_conditions_relaxed}, then $w$ also fulfills \eqref{eq:convex_optimality_conditions_relaxed}.

	Analogously to \cref{corollary:regularity_of_w} \cref{item:regularity_of_w_1}, we find that \eqref{eq:alternative_optimality_conditions_relaxed_w} for $(w,\zeta)$ is equivalent to the existence of a partition $X_0 \cupdot X_{01} \cupdot X_1 = X$ such that 
	\begin{equation}
		\label{eq:w_pointwise_relaxed}
		\left\{
			\begin{alignedat}{3}
				w(x) = 0 
				\text{ and }
				&
				-\paren[auto](){\Lambda^* \nabla F_q(\Lambda w)}(x) 
				&&
				\le   \zeta - y
				&&
				\quad
				\text{a.e.\ in } X_0
				\\
				0 < {}
				w(x) < 1
				\text{ and }
				&
				-\paren[auto](){\Lambda^* \nabla F_q(\Lambda w)}(x) - \alpha\, w(x) 
				&&
				= \zeta -y 
				&&
				\quad
				\text{a.e.\ in } X_{01}
				\\
				w(x) =  1
				\text{ and }
				&
				-\paren[auto](){\Lambda^* \nabla F_q(\Lambda w)}(x) - \alpha 
				&&
				\ge \zeta- y
				&&
				\quad
				\text{a.e.\ in } X_1
			\end{alignedat}
		\right\}
	\end{equation}

	It remains to prove that $(w,\zeta)$ fulfills \eqref{eq:w_pointwise_relaxed} for $\norm{y}_{L^\infty(X)} \le \varepsilon$ if and only if $(w,\zeta)$ fulfills \eqref{eq:w_pointwise_relaxed_estimated}.
	Suppose first that \eqref{eq:w_pointwise_relaxed} holds for some $y$ with $\norm{y}_{L^\infty(X)} \le \varepsilon$.
	Then clearly \eqref{eq:w_pointwise_relaxed_estimated} holds.
	Conversely, suppose that \eqref{eq:w_pointwise_relaxed_estimated} holds and choose
	\begin{equation*}
		y
		\coloneqq
		\begin{cases}
			-\varepsilon
			&
			\text{ on } X_0,
			\\
			-\paren[auto](){\Lambda^* \nabla F_q(\Lambda w)} -  \zeta - \alpha\, w
			&
			\text{ on } X_{01},
			\\
			\varepsilon
			&
			\text{ on } X_1
			.
		\end{cases}
	\end{equation*}
	By \eqref{eq:w_pointwise_relaxed_estimated}, we have $\norm{y}_{L^\infty(X)} \le \varepsilon$.
	This shows \eqref{eq:w_pointwise_relaxed}.
\end{proof}

We conclude this section by providing the formula for the gradients of the design criteria~$F_q$ from \eqref{eq:criteria}.
Its proof can be found, e.g., in \cite[eqs.~(51) and  (110)]{PetersenPedersen2008}.
\begin{lemma}
	\label{lemma:gradients}
	The gradient of $F_q$ at $\FIM \in \S^n$ such that $\FIM \succ 0$ is given by
	\begin{equation}
		\label{eq:gradient}
		\nabla F_q(\FIM) =
		\begin{cases}
			- (\frac{1}{n})^{\frac{1}{q}} (\trace \FIM^{-q})^{\frac{1}{q}-1} \, I^{-q-1}
			&
			\text{for } q \in (0, \infty),
			\\
			- \frac{1}{n} \FIM^{-1}
			&
			\text{for } q = 0
			.
		\end{cases}
	\end{equation}
\end{lemma}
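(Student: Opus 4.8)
The plan is to treat the two cases in \eqref{eq:gradient} separately and, in both, to reduce the computation to elementary differential calculus on the open cone $\setDef{\FIM \in \S^n}{\FIM \succ 0}$, on which $F_q$ is smooth so that the gradient with respect to the Frobenius inner product $\dprod$ is well defined. For $q = 0$ I would write $F_0(\FIM) = \frac{1}{n}\ln\det\FIM^{-1} = -\frac{1}{n}\ln\det\FIM$ and invoke Jacobi's formula, according to which the directional derivative of $\FIM \mapsto \ln\det\FIM$ in a direction $H \in \S^n$ equals $\trace(\FIM^{-1}H) = \FIM^{-1}\dprod H$ (using that $\FIM^{-1} \in \S^n$); hence $\nabla F_0(\FIM) = -\frac{1}{n}\FIM^{-1}$, which is the second line of \eqref{eq:gradient}.

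For $q \in (0,\infty)$ I would write $F_q(\FIM) = n^{-1/q}\,g(\FIM)^{1/q}$ with $g(\FIM) \coloneqq \trace\FIM^{-q} > 0$, so that the chain rule gives $\nabla F_q(\FIM) = n^{-1/q}\,\frac{1}{q}\,g(\FIM)^{1/q-1}\,\nabla g(\FIM)$. The whole statement then reduces to the single identity $\nabla\trace\FIM^{-q} = -q\,\FIM^{-q-1}$: the factor $\frac{1}{q}$ cancels against $-q$, and together with $n^{-1/q} = (1/n)^{1/q}$ this reproduces exactly the first line of \eqref{eq:gradient}. To prove the identity I would diagonalize $\FIM = Q\,\diag(\lambda_1,\dots,\lambda_n)\,Q^\transp$ with $Q$ orthogonal and $\lambda_i > 0$, and apply the scalar function $h(t) \coloneqq t^{-q}$, which is $C^1$ on $(0,\infty)$. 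Then $\trace\FIM^{-q} = \sum_{i=1}^n h(\lambda_i)$, and differentiating along $t \mapsto \FIM + tH$ at $t = 0$ produces the directional derivative $\sum_{i=1}^n h'(\lambda_i)\,(Q^\transp H Q)_{ii} = \trace\bigl(Q\,\diag(h'(\lambda_i))\,Q^\transp H\bigr) = h'(\FIM)\dprod H$, and since $h'(t) = -q\,t^{-q-1}$ we obtain $h'(\FIM) = -q\,\FIM^{-q-1}$, as needed.

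The one point I expect to require care is the differentiation of the spectral sum $\sum_i h(\lambda_i(t))$ when $\FIM$ has repeated eigenvalues, since then the individual eigenvalue branches need not be differentiable in $t$. This is not a genuine obstacle: the map $\FIM \mapsto \trace h(\FIM)$ is in fact smooth on the positive definite cone --- e.g.\ because $h$ can be approximated uniformly in $C^1$ by polynomials on a compact interval containing the spectrum of $\FIM$, or via the holomorphic functional calculus $h(\FIM) = \frac{1}{2\pi\mathrm{i}}\oint h(z)\,(zI - \FIM)^{-1}\,\d z$ --- and the expression $h'(\FIM)\dprod H$ for its gradient is continuous in $\FIM$, so it extends from the dense set of matrices with simple spectrum, where the eigenvalue argument above is literally valid, to all $\FIM \succ 0$. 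As an alternative avoiding eigenvalues altogether, one may use $\FIM^{-q} = \Gamma(q)^{-1}\int_0^\infty t^{q-1}e^{-t\FIM}\,\d t$ for $\FIM \succ 0$, differentiate under the integral with $\nabla\trace e^{-t\FIM} = -t\,e^{-t\FIM}$ (Duhamel's formula together with cyclicity of the trace), and recognize $\int_0^\infty t^q e^{-t\FIM}\,\d t = \Gamma(q+1)\,\FIM^{-q-1}$; in either case the routine manipulations are those collected in \cite{PetersenPedersen2008}.
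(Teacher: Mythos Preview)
Your argument is correct in both cases, and the only delicate point --- differentiability of $\FIM \mapsto \trace h(\FIM)$ at matrices with repeated eigenvalues --- you have handled adequately, either by density plus continuity of $h'(\FIM)\dprod H$ or via the integral representation.

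By way of comparison: the paper does not actually give a proof of this lemma. It simply states the result and refers the reader to \cite[eqs.~(51) and~(110)]{PetersenPedersen2008} (the Matrix Cookbook), which record precisely the identities $\nabla\ln\det\FIM = \FIM^{-1}$ and $\nabla\trace\FIM^{-q} = -q\,\FIM^{-q-1}$ that you derive. So your proposal is not a different route so much as a fully written-out version of what the paper leaves to a citation; it is self-contained and would be appropriate if one wanted to avoid relying on an external formula collection.
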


%------------------------------------------------------------------
\section{Discretization by Piecewise Constants}
\label{sec:Discretization_DG0}
%------------------------------------------------------------------

In this section we discuss the discretization of the design space $X$ and the design density $w \in L^2(X)$.
Suppose that $X$ can be discretized into a finite number~$m$ of compact polyhedra (cells) $(E_i)_{i = 1,\dots, m}$ of dimension~$d$, whose interiors do not intersect.
Based on this discretization, $w$ is assumed to be $DG_0$, i.e., constant on each $E_i$.
We write 
\begin{equation*}
	w(x) = \sum_{i = 1}^m w_i \, \chi_{E_i}(x)
\end{equation*}
where $\chi_{E_i}$ denotes the indicator function of cell~$E_i$. 
We endow the space $\R^m$ of coefficients of $w$ with the appropriate inner product represented by the matrix
\begin{equation*}
	M 
	\coloneqq 
	\diag(\abs{E_1}, \ldots, \abs{E_m})
	,
\end{equation*}
where $\abs{E_i}$ denotes the volume of the $i$-th cell.
We associate with each cell the elementary experiment $\Upsilon_i$ whose FIM is given by some approximation of the average FIM $\int_{E_i} \Upsilon(\cdot) \dx / \abs{E_i}$.
When $\Upsilon \in C^0(X;\S^n)$, we can simply take $\Upsilon_i = \Upsilon(x_i)$, where $x_i$ is the midpoint of $E_i$.
This is our method of choice for the numerical experiments in \cref{sec:numerical_results,sec:simplicial_decomposition} .

Consequently, we represent the synthesis operator $\Lambda: \R^m \to \S^n$ as
\begin{equation}
	\label{eq:synthesis_operator_discrete}
	\Lambda w 
	= 
	\sum_{i = 1}^m \Upsilon_i \, \abs{E_i} \, w_i \in \S^n 
	.
\end{equation}
Its Hilbert space adjoint $\Lambda^*: \S^n \to \R^m$ (w.r.t.\ the $M$-inner product in $\R^m$) is then given by
\begin{equation}
	[\Lambda^* P]_i = P \colon \Upsilon_i
	,
	\quad
	i = 1, \ldots, m
	.
\end{equation}
The bound constraints are now written as 
\begin{equation}
	\label{eq:bounded_in_L2_discrete}
	\bounds 
	\coloneqq 
	[0,1]^m 
	=
	\setDef{w \in \R^m}{0 \le w_i \le 1 \text{ for all } i = 1, \ldots, m}
\end{equation}
and the total mass constraint becomes
\begin{equation}
	\label{eq:simplex_in_L2_discrete}
	\Delta_C 
	\coloneqq 
	\setDef[auto]{w \in \R^m}{w \ge 0 \text{ and } \sum_{i=1}^m \abs{E_i} \, w_i = C} 
	.
\end{equation}
Hence the discrete version of problem \eqref{eq:OED_regularized}--\eqref{eq:bounded_in_L2} can be cast as
\begin{equation}
	\label{eq:discretized_problem}
	\begin{aligned}
		\text{Minimize} \quad & F_q(\Lambda w) + \frac{\alpha}{2} \norm{w}_M^2, \quad w \in \R^m \\
		\text{s.t.} \quad & w \in \Delta_C \cap \bounds
		.
	\end{aligned}
\end{equation}

%------------------------------------------------------------------
\subsection*{Evaluation of $\proj{\Delta_C \cap \bounds}{\cdot}$}
%------------------------------------------------------------------

The orthogonal projection w.r.t.\ the $M$-inner product onto the constraint set $\Delta_C \cap \bounds$ is an essential ingredient in all forthcoming algorithms.
We begin by stating a discrete version of \cref{lemma:projection} without proof.
\begin{lemma}
	\label{lemma:projection_discrete}
	Let $f \in \R^m$.
	\begin{enumerate}[label=$(\alph*)$]
		\item 
			Suppose that $v = \proj{\Delta_C \cap \bounds}{f}$ holds.
			Then there exists $\zeta \in \R$ such that 
			\begin{equation}
				\label{eq:proj_Delta_bounds_discrete}
				v = \proj{\bounds}{f - \zeta} 
				\quad \text{and} \quad
				\sum_{i=1}^m \abs{E_i} \, v_i = C
			\end{equation}
			holds.

		\item 
			Conversely, suppose that $(v,\zeta) \in \R^m \times \R$ satisfies \eqref{eq:proj_Delta_bounds_discrete}, then we have $v = \proj{\Delta_C \cap \bounds}{f}$.
	\end{enumerate}
\end{lemma}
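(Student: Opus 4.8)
The plan is to mirror the proof of \cref{lemma:projection} essentially line by line, replacing the $L^2(X)$-inner product by the $M$-inner product $\inner{a}{b}_M = a^\transp M b = \sum_{i=1}^m \abs{E_i}\, a_i\, b_i$ on $\R^m$ and the Lebesgue integral by the corresponding weighted sum. Only two structural facts are needed: first, $v = \proj{\Delta_C \cap \bounds}{f}$ is characterized by the variational inequality $\inner{v-f}{w-v}_M \ge 0$ for all $w \in \Delta_C \cap \bounds$; second, since $\bounds = [0,1]^m$ is a box and $M$ is diagonal and positive definite, the $M$-orthogonal projection onto $\bounds$ is componentwise clamping, $[\proj{\bounds}{g}]_i = \max\{\min\{g_i,1\},0\}$, and in particular independent of the weights $\abs{E_i}$.

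For part $(a)$ I would introduce $a(\zeta) \coloneqq \sum_{i=1}^m \abs{E_i}\,[\proj{\bounds}{f-\zeta}]_i$, which is continuous, piecewise linear and monotone decreasing in $\zeta$, with $\lim_{\zeta\to\infty} a(\zeta) = 0$ and $\lim_{\zeta\to-\infty} a(\zeta) = \sum_{i=1}^m \abs{E_i} = \abs{X}$; the assumption $0 < C < \abs{X}$, i.e.\ the discrete form of \eqref{eq:assumption_C}, then yields via the intermediate value theorem some $\zeta \in \R$ with $a(\zeta) = C$. Setting $\bar v \coloneqq \proj{\bounds}{f-\zeta}$, I would check the variational inequality for $\bar v$: for arbitrary $w \in \Delta_C \cap \bounds$, adding the constant $\zeta$ to each component of the argument changes $\inner{\bar v - f}{w-\bar v}_M$ by $\zeta\,\bigl(\sum_i \abs{E_i} w_i - \sum_i \abs{E_i}\bar v_i\bigr) = \zeta\,(C-C) = 0$, so $\inner{\bar v - f}{w-\bar v}_M = \inner{\bar v - (f-\zeta)}{w-\bar v}_M$. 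Splitting this sum over the index sets where $f_i - \zeta \le 0$ (so $\bar v_i = 0$), where $f_i - \zeta \ge 1$ (so $\bar v_i = 1$), and where $0 < f_i - \zeta < 1$ (so $\bar v_i = f_i - \zeta$ and the term vanishes), one is left with $\sum_{f_i-\zeta\le 0} \abs{E_i}\,(-(f_i-\zeta))\,w_i + \sum_{f_i-\zeta\ge 1}\abs{E_i}\,(1-(f_i-\zeta))\,(w_i-1)$, and both summands are nonnegative because $0 \le w_i \le 1$. By uniqueness of the projection this forces $\bar v = v$, and $\sum_i \abs{E_i}\, v_i = a(\zeta) = C$, which is \eqref{eq:proj_Delta_bounds_discrete}.

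For part $(b)$, given $(v,\zeta)$ satisfying \eqref{eq:proj_Delta_bounds_discrete}, I would first note that $v = \proj{\bounds}{f-\zeta} \in \bounds$ together with $v \ge 0$ and $\sum_i \abs{E_i}\, v_i = C$ gives $v \in \Delta_C \cap \bounds$; then the identical estimate as in part $(a)$, now with $v$ in place of $\bar v$, shows $\inner{v-f}{w-v}_M \ge 0$ for all $w \in \Delta_C \cap \bounds$, i.e.\ $v = \proj{\Delta_C \cap \bounds}{f}$. I do not expect a genuine obstacle: the argument is a routine discretization of \cref{lemma:projection}, and the hypotheses enter only through the solvability of $a(\zeta) = C$ (which uses $0 < C < \abs{X}$) and the reduction of the projection onto $\bounds$ to componentwise clamping (which uses that $M$ is diagonal and positive definite). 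For this reason the paper states the lemma without proof.
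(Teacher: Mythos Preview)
Your proposal is correct and matches the paper's approach exactly: the paper states this lemma explicitly without proof as the discrete analogue of \cref{lemma:projection}, and your argument is the straightforward line-by-line transcription of that proof with the $L^2(X)$-inner product replaced by the $M$-inner product and integrals by weighted sums. The two structural observations you single out---that the $M$-projection onto the box is componentwise clamping because $M$ is diagonal, and that the shift by $\zeta$ is harmless because both $\bar v$ and $w$ have the same $M$-weighted sum $C$---are precisely the points where one has to check that the discrete setting does not break anything.
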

Here $f - \zeta$ denotes the vector with components $f_i - \zeta$.
We denote a solution of \eqref{eq:proj_Delta_bounds_discrete} by $(v^*,\zeta^*)$. 
Notice that $v^*$ is unique but in general this does not hold for $\zeta^*$.

In the remainder of this section we describe an efficient algorithm to evaluate $\proj{\Delta_C \cap \bounds}{f}$.
Without loss of generality, we assume that the entries of $f$ are sorted in descending order.
In order to facilitate the notation, we introduce a number of abbreviations, where $v \in \R^m$, $\zeta \in \R$ and $k, \ell \in \{1, \ldots, m\}$ are arbitrary:
\begin{align}
	\nnz(v) 
	&
	\coloneqq \abs[big]{\setDef{ v_i}{i \in \{ 1, \dots, m \} \text{ and } v_i > 0}}
	&
	&
	\text{\#non-zeros in $v$}
	,
	\nonumber
	\\
	\neo(v) 
	&
	\coloneqq \abs[big]{\setDef{ v_i}{i \in \{ 1, \dots, m \} \text{ and } v_i = 1}}
	&
	&
	\text{\#ones in $v$}
	,
	\nonumber
	\\
	k^* 
	&
	\coloneqq \nnz(v^*)
	\quad
	\text{and}
	\quad
	\ell^* 
	\coloneqq \neo(v^*)
	&
	&
	\text{\#non-zeros, ones in $v^*$}
	,
	\nonumber
	\\
	\R \ni \zeta \mapsto P(\zeta) 
	& 
	\coloneqq \proj{\bounds}{f-\zeta}
	\in \R^m
	,
	\nonumber
	\\
	\R^m \ni v \mapsto W(v) 
	&
	\coloneqq
	\sum_{i=1}^m \abs{E_i} v_i
	\in \R
	.
	\label{eq:nnz_neo_and_others}
\end{align}
Notice that once the optimal number of ones $\ell^*$ and non-zeros $k^*$ are known, the evaluation of the shift $\zeta^*$ and thus $v^*$ becomes trivial.
Indeed, the total weight constraint reads
\begin{equation}
	\label{eq:condition_for_zeta}
	\sum_{i=1}^{\ell^*} \abs{E_i}
	+
	\sum_{i=\ell^* + 1}^{k^*} (f_i - \zeta^*) \, \abs{E_i}
	=
	C
	.
\end{equation}
The solution of this constraint w.r.t.\ $\zeta$ leads to the definition of the following function,
\begin{equation}
	\label{eq:definition_of_zeta_function}
	\zeta(k,\ell) 
	\coloneqq 
	\begin{cases}
		\displaystyle\paren[auto](){\sum_{i = \ell+1}^k \abs{E_i}}^{-1} \paren[auto](){\sum_{i = 1}^{\ell} \abs{E_i} + \sum_{i=\ell+1}^k \abs{E_i} \, f_i - C}
		&
		\text{if } k > \ell,
		\\
		f_k-1
		&
		\text{if } k = \ell,
	\end{cases}
\end{equation}
for $1 \le \ell \le k \le m$.
The second case selects the largest possible value of $\zeta$ compatible with the assumption $k = \ell$, i.e., that all positive entries of $\proj{\bounds}{f - \zeta(k,\ell)}$ are indeed equal to one.

Before we present the idea to evaluate $v^* = \proj{\Delta_C \cap \bounds}{f}$ efficiently, we characterize the (non\nobreakdash-)uniqueness of $\zeta^*$.
\begin{lemma}
	\label{lemma:uniqueness_zetastar}
	\begin{enumerate}[label=$(\alph*)$]
		\item 
			\label{item:uniqueness_zetastar_lstar_equal_kstar}
			Suppose that $\ell^* = k^*$ holds.
			Then $f_{k^*+1} \le f_{k^*} - 1$.
			Moreover, \eqref{eq:proj_Delta_bounds_discrete} is valid if and only if $\zeta^* \in [f_{k^*+1}, f_{k^*} -1]$.
		\item 
			\label{item:uniqueness_zetastar_lstar_smaller_kstar}
			Suppose that $\ell^* < k^*$.
			Then \eqref{eq:proj_Delta_bounds_discrete} is valid if and only if $\zeta^* = \zeta(k^*,\ell^*)$.
	\end{enumerate}
\end{lemma}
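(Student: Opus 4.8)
The plan is to translate the identity $v^* = \proj{\bounds}{f - \zeta^*} = P(\zeta^*)$ furnished by \cref{lemma:projection_discrete} into pointwise statements. Since this projection acts componentwise via $v^*_i = \max\{\min\{f_i - \zeta^*, 1\}, 0\}$, since $t \mapsto \max\{\min\{t,1\},0\}$ is nondecreasing, and since the entries of $f$ are sorted in descending order, the vector $v^*$ is nonincreasing. Hence its entries split into three consecutive blocks: $v^*_i = 1$ for $i = 1, \ldots, \ell^*$, then $0 < v^*_i < 1$ for $i = \ell^* + 1, \ldots, k^*$, and $v^*_i = 0$ for $i = k^* + 1, \ldots, m$. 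Before the case distinction I would record that \eqref{eq:assumption_C} rules out the degenerate situations: $k^* \ge 1$ (otherwise $W(v^*) = 0 \ne C$) and $\ell^* \le m - 1$ (otherwise $v^* \equiv 1$ and $W(v^*) = \abs{X} \ne C$); in particular, in \cref{item:uniqueness_zetastar_lstar_equal_kstar} the index $k^* + 1$ is admissible.

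Reading off inequalities from the block structure is the core step. An entry $v^*_i = 1$ forces $f_i - \zeta^* \ge 1$, an entry $0 < v^*_i < 1$ forces $f_i - 1 < \zeta^* < f_i$ (equivalently $v^*_i = f_i - \zeta^*$), and an entry $v^*_i = 0$ forces $f_i - \zeta^* \le 0$. For \cref{item:uniqueness_zetastar_lstar_equal_kstar}, where $\ell^* = k^*$, there is no middle block, so the only constraints come from $i = k^*$ (a one) and $i = k^*+1$ (a zero), giving $f_{k^*+1} \le \zeta^* \le f_{k^*} - 1$; this simultaneously yields $f_{k^*+1} \le f_{k^*} - 1$ and the inclusion $\zeta^* \in [f_{k^*+1}, f_{k^*} - 1]$. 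For the converse I would pick an arbitrary $\zeta$ in that interval and use the monotonicity of $f$ to check that $f_i - \zeta \ge 1$ for all $i \le k^*$ and $f_i - \zeta \le 0$ for all $i > k^*$; hence $P(\zeta) = v^*$ and, in particular, $W(P(\zeta)) = W(v^*) = C$, so $(v^*, \zeta)$ satisfies \eqref{eq:proj_Delta_bounds_discrete}.

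For \cref{item:uniqueness_zetastar_lstar_smaller_kstar}, where $\ell^* < k^*$, I would substitute the block description of $v^*$ into the mass constraint $W(v^*) = C$ of \eqref{eq:proj_Delta_bounds_discrete}, obtaining
\begin{equation*}
	\sum_{i=1}^{\ell^*} \abs{E_i} + \sum_{i = \ell^*+1}^{k^*} \abs{E_i} \, (f_i - \zeta^*) = C .
\end{equation*}
Since $k^* > \ell^*$ forces $\sum_{i = \ell^*+1}^{k^*} \abs{E_i} > 0$, this equation has a unique solution $\zeta^*$, and that solution is precisely $\zeta(k^*, \ell^*)$ from the first branch of \eqref{eq:definition_of_zeta_function}. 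This gives the ``only if'' implication together with uniqueness; the ``if'' implication then follows because \cref{lemma:projection_discrete} guarantees that \emph{some} $\zeta$ solving \eqref{eq:proj_Delta_bounds_discrete} exists, and by what was just shown that $\zeta$ must equal $\zeta(k^*,\ell^*)$.

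The computations are elementary; I expect the only real care to go into the bookkeeping of the three index blocks and into excluding the degenerate cases $k^* = 0$ and $\ell^* = m$, which is what makes $f_{k^*+1}$ meaningful in \cref{item:uniqueness_zetastar_lstar_equal_kstar} and the denominator in \eqref{eq:definition_of_zeta_function} nonzero in \cref{item:uniqueness_zetastar_lstar_smaller_kstar}. That bookkeeping, rather than any inequality, is the main (minor) obstacle.
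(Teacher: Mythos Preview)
Your proposal is correct and follows essentially the same approach as the paper: translate $v^* = P(\zeta^*)$ into componentwise inequalities, use the sorted block structure to obtain the bounds on $\zeta^*$ in part~(a), and in part~(b) solve the mass constraint for the unique $\zeta^*$. You are slightly more explicit than the paper about excluding the degenerate cases $k^* = 0$ and $\ell^* = m$ via \eqref{eq:assumption_C}, which is a welcome bit of care.
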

\begin{proof}
	\Cref{item:uniqueness_zetastar_lstar_equal_kstar}:
	Since $\ell^* = k^*$ holds, we obviously have $f_{k^*} -\zeta^* \ge 1$ and $f_{k^* +1 } - \zeta^* \le 0$ for any $(v^*,\zeta^*)$ satisfying \eqref{eq:proj_Delta_bounds_discrete}.
	Therefore $\zeta^*$ is bounded by 
	\begin{equation*}
		f_{k^*} -1 \ge \zeta^* \ge f_{k^* + 1}.
	\end{equation*}
	This proves the first claim and the 'only if' part.

	Now we choose $\zeta \in [f_{k^* +1}, f_{k^*} -1]$ arbitrarily.
	For all $i \le k^*$ we get
	\begin{equation*}
		f_i - \zeta \ge f_{k^*} - \zeta \ge f_{k*} - f_{k^*} +1  = 1 
	\end{equation*}
	and analogously for all $i \ge k^*+1$ we have
	\begin{equation*}
		f_i - \zeta \le f_{k^* +1} - \zeta \le f_{k^* +1} - f_{k^* +1} = 0
		.
	\end{equation*}
	So, since $k^* = \ell^*$ holds, we have $P(\zeta) = v^*$.

	\Cref{item:uniqueness_zetastar_lstar_smaller_kstar}:
	When $\ell^* < k^*$, then necessarily $0 < v^*_{\ell^*+1} = f_{\ell^*+1} - \zeta^* < 1$.
	Since $v^*$ is unique, $\zeta^*$ is unique as well.
	In view of \eqref{eq:condition_for_zeta}, $\zeta^* = \zeta(k^*,\ell^*)$ is necesary and sufficient for \eqref{eq:proj_Delta_bounds_discrete}.
\end{proof}

Based on these results, we propose to compute $v^* = \proj{\Delta_C \cap \bounds}{f}$ by the following algorithm.
\begin{algorithm}[ht]
	\caption{Computation of $\proj{\Delta_C \cap \bounds}{f}$}
	\label{alg:projection_feasible_set}
	\begin{algorithmic}[1]
		\setcounter{ALC@unique}{0}
		\REQUIRE $f \in \R^m$ (with entries sorted in descending order of magnitude)
		\REQUIRE weights $\abs{E_i}$ (with entries sorted accordingly)
		\ENSURE $v \in \R^m$, where $v = \proj{\Delta_C \cap \bounds}{f}$
		\STATE Find $k  \coloneqq \max \setDef{i \in \{1, \dots, m\}}{W(P(f_i)) < C}$
		\label{alg:projection_feasible_set_definition_k}
		\STATE Compute $\ell_{\min} \coloneqq \abs[big]{\setDef{i \in \{1, \dots, k\}}{f_i \ge f_k + 1}} \ge 0$
		\label{alg:projection_feasible_set_definition_lmin}
		\STATE Compute 
		\label{alg:projection_feasible_set_definition_lmax}
		\begin{equation*}
			\ell_{\max} 
			\coloneqq 
			\begin{cases}
				\abs[big]{\setDef{i \in \{1, \dots, k\}}{f_i \ge f_{k+1} + 1}}
				&
				\text{if $k < m$},
				\\
				m-1
				&
				\text{if $k = m$}.
			\end{cases}
		\end{equation*}
		\STATE Find $\ell \coloneqq \min \setDef{i \in \{\ell_{\min}, \dots, \ell_{\max} \}}{1 > f_{i+1} - \zeta(k,i)}$
		\label{alg:projection_feasible_set_definition_l}
		\STATE Set $v \coloneqq P( \zeta(k,\ell))$ 
		\label{alg:projection_feasible_set_generate_v}
	\end{algorithmic}
\end{algorithm}

Before we can prove the correctness of \cref{alg:projection_feasible_set}, we need the following technical lemma, whose proof is given in \cref{section:proof_lemma:technical_results}.
\begin{lemma} 
	\label{lemma:technical_results}
	The following statements are true.
	\begin{enumerate}[label=$(\alph*)$]
		\item 
			$\zeta \mapsto W(P(\zeta))$ is monotonically decreasing.
			\label{item:monotonicity_W_P}

		\item 
			For $0 \le \ell \le k^*-2$ we have
			\begin{equation}
				\label{eq:monotonicity_zeta}
				\zeta(k^*,\ell+1) 
				\begin{cases}
					< \zeta(k^*,\ell)
					&
					\text{if } 1 < f_{\ell+1} - \zeta(k^*,\ell),
					\\
					= \zeta(k^*,\ell)
					&
					\text{if } 1 = f_{\ell+1} - \zeta(k^*,\ell),
					\\
					> \zeta(k^*,\ell)
					&
					\text{if } 1 > f_{\ell+1} - \zeta(k^*,\ell).
				\end{cases}
			\end{equation}
			If $k^* = \ell^*$ we have $\zeta(k^*,k^*) = \zeta(k^*,k^*-1)$.
			\label{item:monotonicity_zeta}

		\item 
			For $k^* = \ell^*$ we have
			\begin{equation*}
				f_{i+1} - \zeta(k^*,i) 
				\begin{cases}
					\ge 1 
					&
					\text{if } i \in \{ 1, \dots, k^*-1 \},
					\\
					< 1 
					&
					\text{if } i = k^*
					.
				\end{cases}
			\end{equation*}
			\label{item:feasibility_lstar_equal_kstar}

		\item 
			Suppose $k^* > \ell^*$.
			If there exists $r \in \{ \ell_{\min}, \dots, k^*-1 \}$ such that $1 > f_{r+1} - \zeta(k^*,r)$ holds, then there is a unique $\ell \in \{ \ell_{\min}, \dots,k^*-1 \}$ such that 
			\begin{equation*} 
				f_{i +1} - \zeta(k^*,i) 
				\begin{cases}
					\ge 1 
					&
					\text{if } i \in \{ \ell_{\min}, \dots, \ell -1 \},
					\\
					< 1 
					&
					\text{if } i \in \{\ell, \dots, k^*-1 \}.
				\end{cases}
			\end{equation*}
			\label{item:feasibility_lstar_smaller_kstar}
	\end{enumerate}
\end{lemma}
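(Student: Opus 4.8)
The four parts are treated in order.
For \cref{item:monotonicity_W_P} I would argue componentwise: the scalar map $t\mapsto\max\{\min\{t,1\},0\}$ is nondecreasing and $\zeta\mapsto f_i-\zeta$ is decreasing, so each component of $P(\zeta)=\proj{\bounds}{f-\zeta}$ is nonincreasing in $\zeta$; since $W$ sums these components with the strictly positive weights $\abs{E_i}$, the map $\zeta\mapsto W(P(\zeta))$ is nonincreasing as well.
For \cref{item:monotonicity_zeta} the plan is a direct computation. Abbreviating $S_\ell=\sum_{i=1}^{\ell}\abs{E_i}$, $T_\ell=\sum_{i=\ell+1}^{k^*}\abs{E_i}$ and $R_\ell=\sum_{i=\ell+1}^{k^*}\abs{E_i}\,f_i$, one has $\zeta(k^*,\ell)=(S_\ell+R_\ell-C)/T_\ell$ for $\ell<k^*$, and $T_{\ell+1}=T_\ell-\abs{E_{\ell+1}}>0$ whenever $\ell\le k^*-2$. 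Using $S_{\ell+1}=S_\ell+\abs{E_{\ell+1}}$ and $R_{\ell+1}=R_\ell-\abs{E_{\ell+1}}\,f_{\ell+1}$ and writing $\zeta(k^*,\ell+1)-\zeta(k^*,\ell)$ over the common denominator $T_\ell T_{\ell+1}>0$, the numerator simplifies to $\abs{E_{\ell+1}}\,T_\ell\,\bigl(1-(f_{\ell+1}-\zeta(k^*,\ell))\bigr)$; the trichotomy in \eqref{eq:monotonicity_zeta} is then just the sign of $1-(f_{\ell+1}-\zeta(k^*,\ell))$. For the final assertion, when $k^*=\ell^*$ the vector $v^*=\proj{\bounds}{f-\zeta^*}$ is nonincreasing in its index (because $f$ is sorted), hence equals one on its first $k^*$ entries and zero afterwards, so the mass constraint forces $C=\sum_{i=1}^{k^*}\abs{E_i}$; substituting this into the $k>\ell$ branch of \eqref{eq:definition_of_zeta_function} with $\ell=k^*-1$ gives $\zeta(k^*,k^*-1)=f_{k^*}-1=\zeta(k^*,k^*)$.

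For \cref{item:feasibility_lstar_equal_kstar} I would again use the identity $C=\sum_{i=1}^{k^*}\abs{E_i}$, which holds precisely when $k^*=\ell^*$. Inserting it into \eqref{eq:definition_of_zeta_function} and cancelling yields, for $1\le i\le k^*-1$,
\begin{equation*}
	\zeta(k^*,i)=\frac{\sum_{j=i+1}^{k^*}\abs{E_j}\,(f_j-1)}{\sum_{j=i+1}^{k^*}\abs{E_j}},
\end{equation*}
a convex combination of $f_{i+1}-1,\dots,f_{k^*}-1$. Hence $f_{i+1}-1-\zeta(k^*,i)$ is a convex combination of the numbers $f_{i+1}-f_j$ with $j=i+1,\dots,k^*$, all nonnegative because $f$ is sorted in descending order, so $f_{i+1}-\zeta(k^*,i)\ge 1$. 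For $i=k^*$ one has $\zeta(k^*,k^*)=f_{k^*}-1$ directly from \eqref{eq:definition_of_zeta_function}; moreover $k^*=\ell^*<m$ here, since $k^*=m$ would force $v^*\equiv 1$ and thus $C=\abs{X}$, contradicting \eqref{eq:assumption_C}. Therefore \cref{lemma:uniqueness_zetastar}~\cref{item:uniqueness_zetastar_lstar_equal_kstar} applies and gives $f_{k^*+1}\le f_{k^*}-1$, so that $f_{k^*+1}-\zeta(k^*,k^*)\le 0<1$.

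For \cref{item:feasibility_lstar_smaller_kstar} I would combine \cref{item:monotonicity_zeta} with the monotonicity of $f$ to obtain the key implication: if $f_{i+1}-\zeta(k^*,i)<1$ for some $i$ with $\ell_{\min}\le i\le k^*-2$, then \eqref{eq:monotonicity_zeta} gives $\zeta(k^*,i+1)>\zeta(k^*,i)$, and since $f_{i+2}\le f_{i+1}$ this yields $f_{i+2}-\zeta(k^*,i+1)<f_{i+1}-\zeta(k^*,i)<1$. Consequently the set $\setDef{i\in\{\ell_{\min},\dots,k^*-1\}}{f_{i+1}-\zeta(k^*,i)<1}$, if nonempty---which is exactly the hypothesis of the statement---has the form $\{\ell,\dots,k^*-1\}$ with $\ell$ its minimum, and this $\ell$ is the unique index with the asserted sign pattern, the reverse inequality $f_{i+1}-\zeta(k^*,i)\ge 1$ on $\{\ell_{\min},\dots,\ell-1\}$ holding by minimality of $\ell$. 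I expect the only genuine obstacle to be the bookkeeping in \cref{item:monotonicity_zeta}: factoring the numerator of $\zeta(k^*,\ell+1)-\zeta(k^*,\ell)$ as $\abs{E_{\ell+1}}\,T_\ell\,\bigl(1-(f_{\ell+1}-\zeta(k^*,\ell))\bigr)$ is the one step that needs care, after which \cref{item:feasibility_lstar_equal_kstar} and \cref{item:feasibility_lstar_smaller_kstar} follow quickly from \cref{item:monotonicity_zeta}, the convex-combination form of $\zeta(k^*,\cdot)$, and \cref{lemma:uniqueness_zetastar}.
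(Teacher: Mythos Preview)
Your proposal is correct and follows essentially the same route as the paper: the same componentwise monotonicity for \cref{item:monotonicity_W_P}, the same algebraic identity relating $\zeta(k^*,\ell+1)-\zeta(k^*,\ell)$ to $1-(f_{\ell+1}-\zeta(k^*,\ell))$ for \cref{item:monotonicity_zeta} (the paper derives the equivalent relation $\abs{E_{\ell+1}}\bigl(\zeta(k^*,\ell)-f_{\ell+1}+1\bigr)=T_{\ell+1}\bigl(\zeta(k^*,\ell+1)-\zeta(k^*,\ell)\bigr)$), the same use of $C=\sum_{i=1}^{k^*}\abs{E_i}$ and the convex-combination/averaging bound for \cref{item:feasibility_lstar_equal_kstar}, and the same forward-propagation argument for \cref{item:feasibility_lstar_smaller_kstar}. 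The only cosmetic differences are that your common-denominator computation in \cref{item:monotonicity_zeta} is slightly more compact, and at $i=k^*$ in \cref{item:feasibility_lstar_equal_kstar} you invoke \cref{lemma:uniqueness_zetastar}~\cref{item:uniqueness_zetastar_lstar_equal_kstar} to get $f_{k^*+1}\le f_{k^*}-1$ whereas the paper argues directly from $v^*_{k^*+1}=0<1=v^*_{k^*}$ that $f_{k^*+1}<f_{k^*}$.
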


We are now in a position to prove the correctness of \cref{alg:projection_feasible_set}.
\begin{theorem}
	\label{theorem:correctness_of_alg:projection_feasible_set}
	For arbitrary $f \in \R^m$, the pair $(v,\zeta)$ as defined in \cref{alg:projection_feasible_set}, satifies \eqref{eq:proj_Delta_bounds_discrete}, i.e., $v = \proj{\Delta_C \cap \bounds}{f}$ holds.
\end{theorem}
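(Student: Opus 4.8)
The strategy is to show that the quantities $k$ and $\ell$ produced by Steps~\ref{alg:projection_feasible_set_definition_k} and~\ref{alg:projection_feasible_set_definition_l} of \cref{alg:projection_feasible_set} coincide with $k^* = \nnz(v^*)$ and $\ell^* = \neo(v^*)$, respectively; once this is known, \eqref{eq:condition_for_zeta}--\eqref{eq:definition_of_zeta_function} show that $\zeta(k,\ell) = \zeta(k^*,\ell^*)$ is a valid shift, and hence $v = P(\zeta(k,\ell)) = \proj{\Delta_C\cap\bounds}{f}$ by \cref{lemma:projection_discrete}. The whole proof thus reduces to two identifications, $k = k^*$ and $\ell = \ell^*$, which I would carry out in this order.

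First I would establish $k = k^*$. By \cref{lemma:technical_results}~\ref{item:monotonicity_W_P}, the map $\zeta \mapsto W(P(\zeta))$ is monotonically decreasing; equivalently $i \mapsto W(P(f_i))$ is monotone along the sorted entries of $f$, so the index $k$ in Step~\ref{alg:projection_feasible_set_definition_k} is well defined as the last index where $W(P(f_i)) < C$. The characterization is that $f_i - \zeta^* > 0$ precisely for $i \le k^*$, i.e.\ $\zeta^* \in [f_{k^*+1}, f_{k^*})$ (with the convention $f_{m+1} = -\infty$); combining this with $W(P(\zeta^*)) = C$ and the monotonicity of $W \circ P$ pins down $k^*$ as exactly $\max\setDef{i}{W(P(f_i)) < C}$, which is the definition of $k$. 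A small amount of care is needed at the boundary cases $k = m$ and when some $f_i$ coincide, but these are handled by the descending-sort assumption and the convention on $f_{m+1}$.

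Next, with $k = k^*$ fixed, I would identify $\ell = \ell^*$ by splitting into the two cases of \cref{lemma:uniqueness_zetastar}. If $k^* = \ell^*$, then \cref{lemma:technical_results}~\ref{item:feasibility_lstar_equal_kstar} says $f_{i+1} - \zeta(k^*,i) \ge 1$ for $i \le k^*-1$ and $f_{k^*+1} - \zeta(k^*,k^*) < 1$; one checks that $\ell_{\min} \le k^* \le \ell_{\max}$, so the minimization in Step~\ref{alg:projection_feasible_set_definition_l} returns $\ell = k^* = \ell^*$, and $\zeta(k^*,k^*) = f_{k^*}-1 \in [f_{k^*+1}, f_{k^*}-1]$ is admissible by \cref{lemma:uniqueness_zetastar}~\ref{item:uniqueness_zetastar_lstar_equal_kstar}. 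If $k^* > \ell^*$, then by definition $0 < f_{\ell^*+1} - \zeta(k^*,\ell^*) < 1$, so $r = \ell^*$ witnesses the hypothesis of \cref{lemma:technical_results}~\ref{item:feasibility_lstar_smaller_kstar}, which produces a unique $\ell$ with $f_{i+1}-\zeta(k^*,i)\ge 1$ for $i < \ell$ and $< 1$ for $i \ge \ell$; since $\ell^*$ itself has this property (using $\zeta^* = \zeta(k^*,\ell^*)$ from \cref{lemma:uniqueness_zetastar}~\ref{item:uniqueness_zetastar_lstar_smaller_kstar} and monotonicity \cref{lemma:technical_results}~\ref{item:monotonicity_zeta}), uniqueness forces $\ell = \ell^*$. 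I would also verify that $\ell^* \ge \ell_{\min}$, which follows because the $\ell_{\min}$ largest entries of $f$ satisfy $f_i - \zeta(k^*,\ell^*) \ge f_k + 1 - \zeta(k^*,\ell^*) \ge 1$, forcing $v^*_i = 1$ there.

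The main obstacle I anticipate is the bookkeeping in the second case: one must argue that the monotone "switching" behavior of $i \mapsto f_{i+1} - \zeta(k^*, i)$ across the threshold $1$, asserted in \cref{lemma:technical_results}~\ref{item:monotonicity_zeta}--\ref{item:feasibility_lstar_smaller_kstar}, matches up exactly with the structure of $v^*$ (ones, then interior values, then zeros), so that the first index $\ell$ where $f_{\ell+1} - \zeta(k^*,\ell) < 1$ is precisely the number of ones. This is where all the lemmas are used in concert, and the search range $[\ell_{\min}, \ell_{\max}]$ must be shown to bracket $\ell^*$ so that Step~\ref{alg:projection_feasible_set_definition_l} does not miss it; the boundary conventions at $k = m$ in the definition of $\ell_{\max}$ and the tie-breaking when $k^* = \ell^*$ are the fiddly points. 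Everything else is a direct appeal to \cref{lemma:projection_discrete}, \cref{lemma:uniqueness_zetastar}, and \cref{lemma:technical_results}.
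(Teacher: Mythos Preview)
Your plan is correct and mirrors the paper's own proof almost exactly: first identify $k = k^*$ via the monotonicity of $W\circ P$ (\cref{lemma:technical_results}\,\ref{item:monotonicity_W_P}), then identify $\ell = \ell^*$ by first bracketing $\ell^* \in [\ell_{\min},\ell_{\max}]$ and splitting into the cases $k^* = \ell^*$ and $k^* > \ell^*$, invoking \cref{lemma:technical_results}\,\ref{item:feasibility_lstar_equal_kstar} and~\ref{item:feasibility_lstar_smaller_kstar} respectively, and finally conclude via \cref{lemma:uniqueness_zetastar}. The only cosmetic difference is that in the case $k^* > \ell^*$ the paper shows $\ell = \ell^*$ by an explicit contradiction (assuming $\ell^* > \bar\ell$ and deriving $v^*_{\ell^*} < 1$ from \cref{lemma:technical_results}\,\ref{item:monotonicity_zeta}), whereas you phrase it as ``$\ell^*$ has the switching property, so uniqueness forces $\ell = \ell^*$''; when you write this out you will find that verifying the $\ge 1$ half of the switching property at $\ell^*$ amounts to exactly the paper's contradiction argument.
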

\begin{proof}
	Throughout the proof we assume that $k, \ell, \zeta$ and $v$ are defined as in \cref{alg:projection_feasible_set}.
	As before, we set $v^* = \proj{\Delta_C \cap \bounds}{f}$ and $\zeta^*$ according to \eqref{eq:proj_Delta_bounds_discrete}.
	The proof is organized as follows.
	We first show $k = k^*$ and $\ell = \ell^*$, i.e., \cref{alg:projection_feasible_set} predicts the correct number of zeros and ones in $v^*$.
	This is the main part of the proof.
	\begin{itemize}
		\item 
			In order to show $k = k^*$, we distinguish two cases.
			$(i)$ First we assume $k^* = m$ (the number of weights).
			Obviously we have $f_i > \zeta^*$ for all $i \in \{ 1, \dots, m \}$. So by \cref{lemma:technical_results} \cref{item:monotonicity_W_P} we have $W(P(f_i)) < W(P(\zeta^*)) = W(v^*) = C$.
			So we get $k= m = k^*$, see \cref{alg:projection_feasible_set_definition_k} of \cref{alg:projection_feasible_set}.

			$(ii)$ Now we discuss the case $k^* < m$.
			By definition of $k^*$ we have $f_{k^*} > \zeta^* \ge f_{k^* +1}$.
			By \cref{alg:projection_feasible_set_definition_k} of \cref{alg:projection_feasible_set}, we have 
			\begin{equation*}
				W(P(f_k)) < C \text{ as well as } W(P(f_{k+1})) \ge C.
			\end{equation*}
			Since $W(P(\cdot))$ is monotonically decreasing (cf.\ \cref{lemma:technical_results} \cref{item:monotonicity_W_P}) and $W(P(\zeta^*)) = v^* = C$ holds, we have $f_k > \zeta^* \ge f_{k+1}$.
			Thus $k = k^*$ follows.

		\item 
			$\ell = \ell^*$:
			First we prove $\ell_{\min} \le \ell^* \le \ell_{\max}$ with $\ell_{\min}$ and $\ell_{\max}$ as defined in \cref{alg:projection_feasible_set}.
			To see the first inequality, consider an arbitrary $i \in \{ \ell^* +1, \dots, k^* \}$.
			Then
			\begin{equation*}
				1 > f_i - \zeta^* \quad \Leftrightarrow \quad f_i < 1 + \zeta^* < 1 + f_{k^*}
			\end{equation*}
			holds and thus we have 
			\begin{equation*}
				\ell_{\min} \, \abs[big]{\setDef{i \in \{1, \dots, k\}}{f_i \ge f_k + 1}}
				\le
				k - (k-(\ell^*+1)+1)
				=
				\ell^*.
			\end{equation*}
			To show $\ell^* \le \ell_{\max}$, we distinguish two cases.
			$(i)$ When $k^* = m$, then we have $\ell^* < m$ due to assumption~\eqref{eq:assumption_C}.
			Thus \cref{alg:projection_feasible_set_definition_lmax} of \cref{alg:projection_feasible_set} yields $\ell_{\max} = m-1$.
			$(ii)$ Otherwise we have $k^* < m$.
			By definition of $\ell^*$, we have  
			\begin{equation*}
				1 \le f_i - \zeta^* \quad \Leftrightarrow \quad f_i \ge 1 + \zeta^* \ge 1 + f_{k^*+1}
			\end{equation*}
			for arbitrary $i \in \{ 1, \dots, \ell^* \}$.
			So similarly as in case~$(i)$ we obtain
			\begin{equation*}
				\ell_{\max} 
				= 
				\abs[big]{\setDef{ i \in \{1, \dots, k\}}{f_i \ge f_{k+1} + 1}}
				\ge
				\ell^*.
			\end{equation*}

			Next we show that $1 > f_{\ell^*+1} - \zeta(k^*,\ell^*)$ holds.
			Indeed, we recall $\ell^* < m$ by \eqref{eq:assumption_C}, so that the definition of $\ell^*$ implies $1 > f_{\ell^*+1} - \zeta^*$.
			Again, we need to distinguish two cases.
			$(i)$ If $k^* > \ell^*$, then \cref{lemma:uniqueness_zetastar}~\cref{item:uniqueness_zetastar_lstar_smaller_kstar} shows $\zeta(k^*, \ell^*) = \zeta^*$, which proves the claim.
			$(ii)$ If $k^* = \ell^*$, then $f_{\ell^*+1} - \zeta(k^*,\ell^*) = f_{\ell^*+1} - f_{\ell^*} + 1 < 1$ as desired.

			So far we proved $\ell^* \in L \coloneqq \setDef{ i \in \{ \ell_{\min}, \dots, \ell_{\max} \}}{1  > f_{i+1} - \zeta(k^*,i)}$.
			To conclude the proof of $\min L \eqqcolon \ell = \ell^*$, we begin $(i)$ with the case $k^* > \ell^*$.
			In this case, \cref{lemma:technical_results}~\cref{item:feasibility_lstar_smaller_kstar} shows that there is a unique $\bar \ell \in \{ \ell_{\min}, \dots, \ell_{\max} \}$ such that
			\begin{equation*} 
				f_{i +1} - \zeta(k^*,i) 
				\begin{cases}
					\ge 1 
					&
					\text{if } i \in \{ \ell_{\min}, \dots, \bar \ell -1 \},
					\\
					< 1 
					&
					\text{if } i \in \{\bar \ell, \dots, \ell_{\max} \}
				\end{cases}
			\end{equation*}
			holds.
			Obviously we have $\ell = \min L = \bar \ell$.
			Since we already proved $1 > f_{\ell^*+1} - \zeta(k^*,\ell^*)$, clearly we have $\ell^* \ge \ell$.
			It remains to prove $\ell^* \le \ell$.
			We proceed by contradiction and assume $\ell^* > \ell = \bar \ell$.
			Thus by the property of $\bar \ell$ we have 
			\begin{equation}
				1 > f_{\ell^*} - \zeta(k^*,\ell^*-1)
				.
				\tag{$*$}
				\label{eq:previous_estimate}
			\end{equation}
			By \cref{lemma:technical_results}~\cref{item:monotonicity_zeta} and \eqref{eq:previous_estimate} we have $\zeta(k^*,\ell^*) > \zeta(k^*,\ell^*-1)$.
			Plugging this into \eqref{eq:previous_estimate}, we obtain
			\begin{equation*}
				1 > f_{\ell^*} - \zeta(k^*,\ell^*-1) > f_{\ell^*} - \zeta(k^*,\ell^*) = f_{\ell^*} - \zeta^* = v^*_{\ell^*}
			\end{equation*}
			by \cref{lemma:uniqueness_zetastar}~\cref{item:uniqueness_zetastar_lstar_smaller_kstar}, which contradicts the definition of $\ell^*$.
			Thus we have $\ell = \ell^*$ in case~$(i)$.
			In case~$(ii)$, i.e., $\ell^* = k^*$, we can apply \cref{lemma:technical_results}~\cref{item:feasibility_lstar_equal_kstar}, which shows that $L$ contains only one element.
					Thus $\ell^* = \ell$.
	\end{itemize}
	It remains to conclude that $v$, returned by \cref{alg:projection_feasible_set}, coincides with $v^*$.
	In case $k^* < \ell^*$, we can apply \cref{lemma:uniqueness_zetastar}~\cref{item:uniqueness_zetastar_lstar_smaller_kstar} and we obtain $\zeta^* = \zeta(k^*,\ell^*)$, thus we also get $v \coloneqq P(\zeta(k^*,\ell^*)) = P(\zeta^*) = v^*$.
	In case $k^* = \ell^*$, $\zeta^*$ is not necessarily unique but $\zeta(k^*,\ell^*) = f_{k^*} -1$ is a viable choice by \cref{lemma:uniqueness_zetastar}~\cref{item:uniqueness_zetastar_lstar_equal_kstar}.
\end{proof}

%------------------------------------------------------------------
\section{Numerical Results for Two First-Order Algorithms}
\label{sec:numerical_results}
%------------------------------------------------------------------

In this section we consider two algorithms to find solutions $w \in L^2(X)$ for problem \eqref{eq:decomposition_into_F_G_H_primal}--\eqref{eq:definitions_of_G_H_primal}, along with numerical results.
Specifically, we discuss FISTA (\cite{BeckTeboulle2009}) and the proximal extrapolated gradient method from \cite{Malitsky2017}.
We present them only in the continuous setting since their discrete counterparts are then obvious.
In what follows, we denote the objective by
\begin{equation*}
	a(w) 
	\coloneqq 
	F_q(\Lambda w) + \frac{\alpha}{2}\norm{w}_{L^2(X)}^2
\end{equation*}
and write the optimality system \eqref{eq:convex_optimality_conditions} as 
\begin{equation}
	\label{eq:primal_optimality_conditions_modified}
	0 \in A (w) + \partial G(w)
\end{equation}
where $A(w) \coloneqq \nabla a(w) = \Lambda^* \nabla F_q(\Lambda w) + \alpha \, w$.

An essential operation in both first-order methods under consideration is the evaluation of $\prox{\gamma \partial G}{w}$.
Since in our problem, $G$ is the indicator function of the restricted simplex $\Delta_C \cap \bounds$, we have 
\begin{equation*}
	\prox{\gamma \partial G}{w} = \proj{\Delta_C \cap B}{w},
\end{equation*}
regardless of the value of $\gamma > 0$.
This relation be inserted directly into the algorithms.

%------------------------------------------------------------------
\subsection{FISTA}
\label{subsection:FISTA}
%------------------------------------------------------------------

The well-known fast iterative shrinkage and thresholding algorithms (FISTA) was introduced in \cite{BeckTeboulle2009} and it belongs to the class of extrapolated proximal gradient methods.

\begin{algorithm}[ht]
	\caption{FISTA}
	\label{alg:fista}
	\begin{algorithmic}[1]
		\setcounter{ALC@unique}{0}
		\REQUIRE $w^{(-1)}= w^{(0)} \in L^2(X)$, $\gamma^{(0)} > 0$, $t_1 = 0$ and $\eta \in (0 ,1)$
		\STATE Set $k \coloneqq 0$
		\REPEAT
		\STATE Set $k \coloneqq k + 1$
		\STATE Set $t_{k+1} \coloneqq \frac{1}{2}\paren[auto](){1+ \sqrt{1+4t_k^2}}$
		\STATE Set $\theta^{(k)} \coloneqq \max \paren[auto]\{\}{0, \frac{t_k -1}{t_{k+1}}} $
		\STATE Set $v^{(k)} \coloneqq w^{(k)} + \theta^{(k)} \, (w^{(k)} -w^{(k-1)})$
		\STATE Set $i\coloneqq-1$
		\REPEAT
		\STATE Set $i \coloneqq i +1$
		\STATE Set $\bar \gamma \coloneqq \eta^i \, \gamma^{(k-1)}$
		\STATE Set $y \coloneqq \proj[big]{\Delta_C \cap \bounds}{v^{(k)}- \bar \gamma\, A (v^{(k)})}$
		\UNTIL {$a(y) \le a(v^{(k)}) + \inner{A(v^{(k)})}{y-v^{(k)}}_{L^2(X)} + (2 \bar \gamma)^{-1} \norm{y-v^{(k)}}^2_{L^2(X)}$} 
		\label{alg:fista_gamma}
		\STATE Set $\gamma^{(k)} \coloneqq \bar \gamma$
		\STATE Set $w^{(k+1)} \coloneqq y$
		\label{alg:extrap_prox_gradient_prox}
		\UNTIL{a stopping criterion is fulfilled for $w^{(k+1)}$}
	\end{algorithmic}
\end{algorithm}

A convergence result for a finite dimensional version of \cref{alg:fista} was given in \cite[Theorem~4.4]{BeckTeboulle2009}.
This result assumes a global Lipschitz continuity of $A$, i.e., that there exists $L_A > 0$ such that 
\begin{equation*}
	\norm{A(w_1) - A(w_2)}_{L^2(X)} \le L_A \norm{w_1-w_2}_{L^2(X)}
\end{equation*}
holds for all $w_1,w_2 \in L^2(X)$.
Unfortunately, the gradient of $F_q(\Lambda w)$ exists only on
\begin{equation*}
	\setDef{w \in L^2(X)}{\Lambda w \succ 0}
\end{equation*}
and moreover, it is only locally Lipschitz on this set.
Numerically, however, we did not observe any difficulties associated with this fact since the eigenvalues of the information matrices $\FIM^{(k)} = \Lambda w^{(k)}$ remained bounded away from zero throughout the iterations in our experiments.

%------------------------------------------------------------------
\subsection{Extrapolated Proximal Gradient Method (PGMA)}
\label{subsection:Malitsky}
%------------------------------------------------------------------

In this section we consider an extrapolated proximal gradient method (PGMA) presented by \cite{Malitsky2017}.
Notice that besides the different choice of parameters, this algorithm differs from \cref{alg:fista} mainly by the fact that in \cref{alg:malitsky_prox_gradient_prox} of \cref{alg:malitsky_prox_gradient}, the projection onto $\Delta_C \cap \bounds$ is evaluated in $w^{(k)} - \gamma^{(k)} A (v^{(k)})$ instead of $v^{(k)} - \gamma^{(k)} A (v^{(k)})$.
The convergence for a finite dimensional version of \cref{alg:malitsky_prox_gradient} was shown in \cite[Theorem~3.3]{Malitsky2017}.

\begin{algorithm}[ht]
	\caption{Extrapolated proximal gradient method (PGMA)}
	\label{alg:malitsky_prox_gradient}
	\begin{algorithmic}[1]
		\setcounter{ALC@unique}{0}
		\REQUIRE $w^{(-1)} = w^{(0)} = v^{(-1)} \in L^2(X)$ and $\gamma^{(0)} > 0$,  $\rho \in (0, 1)$, $\tau \in [1,2]$, $\gamma^{\max} >0$, $\kappa \in (0, \sqrt{2} -1)$
		\STATE $k \coloneqq 0$
		\STATE $\theta^{(0)} \coloneqq 1$
		\REPEAT
		\STATE $k \coloneqq k + 1$
		\STATE $i \coloneqq -1$
		\REPEAT\label{alg:parameters_malitsky_loop}
		\STATE $ i \coloneqq i+1$
		\STATE $\bar \theta \coloneqq
		\begin{cases}
			\sqrt{\frac{1+\tau \, \theta^{(k-1)}}{2\tau -1}}\rho^i,
				&
				\gamma^{(k-1)} \le \frac{1}{2}\gamma^{\max}
				\\
				\rho^i
				&
				\text{else}
		\end{cases}$
		\STATE $v^{(k)} \coloneqq w^{(k)} + \bar \theta \, (w^{(k)} - w^{(k-1)})$
		\label{alg:malitsky_prox_gradient_extrapolation}
		\STATE $\bar \gamma \coloneqq \paren[auto](){1 -\frac{1}{\tau}} \bar \theta \, \gamma^{(k-1)}$
		\UNTIL{$\bar \gamma \, \norm{A(v^{(k)}) - A(v^{(k-1)})}_{L^2(X)} \le \kappa \paren[auto](){1 -\frac{1}{\tau}} \norm{v^{(k)} - v^{(k-1)}}_{L^2(X)}$}
		\label{alg:parameters_malitsky_stopping}
		\STATE $\theta^{(k)} \coloneqq \bar \theta$
		\STATE $\gamma^{(k)} \coloneqq \bar \gamma$
		\STATE $w^{(k+1)} \coloneqq \proj[big]{\Delta_C \cap \bounds}{w^{(k)} - \gamma^{(k)} A(v^{(k)})}$
		\label{alg:malitsky_prox_gradient_prox}
		\UNTIL{a stopping criterion is fulfilled for $w^{(k+1)}$}
	\end{algorithmic}
\end{algorithm}

As initial values we choose $\kappa = 0.41$, $\rho =  0.7$, $\tau = 2$ as well as $\gamma^{\text{max}} = 10^5$.
Furthermore we set $w^{(-1)}$, $w^{(0)}$ and $v^{(-1)}$ constant and feasible.
For the initialization of $\gamma^{(0)}$ we adapt \cite[Remark~2.1]{Malitsky2017} which suggests to choose $w^{(-2)}$ and choose the largest $\gamma^{(0)}$ which fulfills 
\begin{equation*}
	\gamma^{(0)} \norm{a(w^{(-2)})- a(w^{(0)})} \le \kappa \norm{w^{(-2)} - w^{(0)}}
	.
\end{equation*}
Considering the discretized setting (cf.~\cref{sec:Discretization_DG0}) we construct $w^{(-2)}$ by first choosing a coordinate~$k$ such that $k \in \Argmax \setDef[auto]{1 \le i \le m}{\paren[auto](){A(w^{(0)})}_i}$.
Based on this we define
	\begin{equation*}
		\hat w_i \coloneqq  
		\begin{cases}
			w^{(0)}_i
			&
			\text{if } i \neq k,
			\\
			w^{(0)}_i + 2
			&
			\text{if } i = k,
		\end{cases}
	\end{equation*}
	and $w^{(-2)} = \proj{\Delta_C \cap \bounds}{\hat w}$. 
	By this approach we ensure that $w^{(-2)}$ is feasible, $w^{(-2)}_k = 1$ holds and the remaining entries of $w^{(-2)}$ are equal to the positive part of the corresponding entries of $w^{(0)}$ shifted.
	So by construction of $k$ as an index containing a largest entry of the gradient $A(w^{(0)})$, the vector $w^{(-2)}$ can be seen as an iterate prior to $w^{(0)}$.

%------------------------------------------------------------------
\subsection{Stopping Criterion}
\label{subsection:stopping_criterion}
%------------------------------------------------------------------

In this section we discuss a stopping criterion for both algorithms, which is based on a relaxation of the appropriate discrete version of \eqref{eq:w_pointwise}.
Define, for arbitrary $w \in \bounds$ (see \eqref{eq:bounded_in_L2_discrete}), the index sets
\begin{align*}
	J_0(w)
	&
	\coloneqq \setDef{i \in 1, \dots, m}{w_i = 0},
	\\
	J_{01}(w)
	&
	\coloneqq \setDef{i \in 1, \dots, m}{0 < w_i < 1},
	\\
	J_1 (w)
	&
	\coloneqq \setDef{i \in 1, \dots, m}{w_i =1}
\end{align*}
and set $z(w) \coloneqq - \Lambda^* \nabla F_q(\Lambda w)$.

The discrete counterpart of the necessary and sufficient optimality condition \eqref{eq:w_pointwise} for problem \eqref{eq:decomposition_into_F_G_H_primal}--\eqref{eq:definitions_of_G_H_primal} with $\alpha \ge 0$ reads
\begin{equation}
	\label{eq:w_coefficientwise}
	w \in \bounds
	\quad \text{and} \quad
	\paren[auto]\{\}{
		\begin{alignedat}{3}
			&
			z_i(w) 
			&&
			\le \zeta 
			&&
			\quad
			\text{on } J_0(w) 
			\\
			&
			z_i(w) - \alpha\,w_i
			&& 
			= \zeta 
			&&
			\quad
			\text{on } J_{01}(w) 
			\\
			&
			z_i(w) - \alpha 
			&&
			\ge \zeta
			&&
			\quad
			\text{on } J_1(w)
		\end{alignedat}
	}
	.
\end{equation}
Our stopping criterion is based on the relaxation of the optimality condition presented in \cref{lemma:relaxed_optimality_conditions}.
The discrete counterpart of \eqref{eq:w_pointwise_relaxed_estimated} reads
\begin{equation}
	\label{eq:w_coefficientwise_relaxed}
	w \in \bounds
	\quad \text{and} \quad
	\paren[auto]\{\}{
		\begin{alignedat}{3}
			& 
			z_i(w) 
			&
			\le  \zeta + \varepsilon 
			&
			&&
			\quad
			\text{on } J_0(w) 
			\\
			\zeta - \varepsilon \le {}
			&
			z_i(w) - \alpha\, w_i 
			&
			\le  \zeta + \varepsilon 
			&
			&&
			\quad
			\text{on } J_{01}(w) 
			\\
			\zeta - \varepsilon \le {} 
			&
			z_i(w) -\alpha
			&
			&
			&&
			\quad
			\text{on } J_1(w)
		\end{alignedat}
	}
	.
\end{equation}
A graphical illustration of \eqref{eq:w_coefficientwise_relaxed} is given in \cref{fig:example_shape}. 
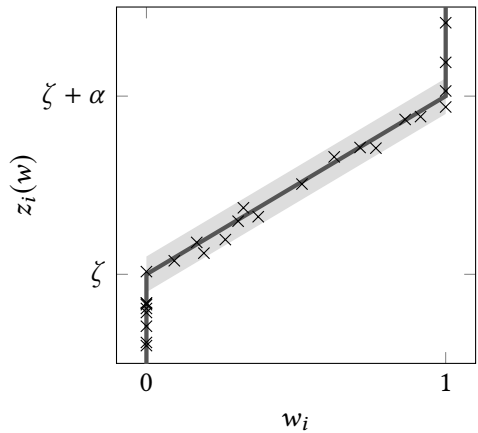
\begin{figure}[htb]
	\centering
	\setlength\figureheight{0.3\textwidth}
	\setlength\figurewidth{0.3\textwidth}
	% This file was created by matlab2tikz.
%
%The latest updates can be retrieved from
%  http://www.mathworks.com/matlabcentral/fileexchange/22022-matlab2tikz-matlab2tikz
%where you can also make suggestions and rate matlab2tikz.
\begin{tikzpicture}

\pgftransformshift{\pgfpointanchor{current page}{center}}

%\fill [white, opacity=0.9] (-0.65\figurewidth,-0.6 \figureheight) rectangle (0.8\figurewidth,0.8\figureheight);

\begin{axis}[%
width=\figurewidth,
height=\figureheight,
at={(-0.4\figurewidth,-0.4\figureheight)},
scale only axis,
xmin=-0.1,
xmax=1.1,
ymin=1.5,
ymax=3.5,
xlabel={$w_i$},
ylabel={$z_i(w)$},
axis background/.style={fill=white},
xtick={0,1},
ytick={2,3},
yticklabels={$\zeta$,$\zeta + \alpha$},
legend style={legend cell align=left, align=left, draw=white!15!black}
]

\addplot [color=black, draw=none, mark=x, mark size=3pt, mark options={solid, black}]
  table[row sep=crcr]{%
0 1.41538933627946\\
0 1.00262208183163\\
0 0.641646185457853\\
0 1.08429926276052\\
0 1.48475932470098\\
0 1.70858306493708\\
0 0.663197514018374\\
0 0.653734580582475\\
0 0.724451429321634\\
0 1.39924027549845\\
0 1.8082504227046\\
0 1.5997364339364\\
0 1.83925891095452\\
0 1.82959454449872\\
0 0.36285984641615\\
0 1.78547036268116\\
0 2.01513559021656\\
0 1.61743467719034\\
0 1.8375322363391\\
0 0.141568300373483\\
};
\addlegendentry{data1}

\addplot [color=black, draw=none, mark=x,mark size=3pt, mark options={solid, black}]
  table[row sep=crcr]{%
0.324096590428357 2.37317378440877\\
0.373637675523219 2.32316685376076\\
0.766737557593821 2.70803163641561\\
0.168086658651728 2.17911782611942\\
0.519724683593696 2.50645483104007\\
0.627455609342921 2.65899498569675\\
0.713913685799429 2.71164592718263\\
0.306396105344489 2.29800864644248\\
0.263687109426303 2.19482848886853\\
0.916003507324704 2.88498495102687\\
0.0931750600170528 2.0760492545122\\
0.192028022891077 2.11877080214012\\
0.864500505417078 2.8690835284135\\
};
\addlegendentry{data2}

\addplot [color=black, draw=none, mark=x, mark size=3pt,mark options={solid, black}]
  table[row sep=crcr]{%
1 4.0359891708455\\
1 4.7004732041994\\
1 3.02889331240912\\
1 4.28825537675772\\
1 2.9390655480558\\
1 3.51131616678571\\
1 4.94503003586374\\
1 4.50574495396654\\
1 3.41173625343962\\
1 4.33244372820397\\
1 3.18949447147285\\
1 4.22260450075365\\
1 4.69973085458569\\
1 4.78957649986776\\
1 3.63157364468692\\
1 3.92125140590262\\
1 4.32698673390269\\
1 4.37865597152638\\
};

\addlegendentry{data3}

\addplot[area legend, draw=none, fill=gray1]
table[row sep=crcr] {%
x	y\\
0 1.9 \\
0 2.1 \\
1 3.1 \\
1 2.9 \\
}--cycle;
\addlegendentry{data4}

\addplot [color=gray2,solid, style = {ultra thick}]
table[row sep=crcr]{
0 1\\
0 2\\
1 3\\
1 4\\
};

\legend{};
\end{axis}
\end{tikzpicture}%
	\caption{Illustration of the stopping criterion \eqref{eq:w_coefficientwise_relaxed}. All pairs $(w_i,z_i(w))$ must lie in the light gray area of width~$2 \varepsilon$ or on one of the vertical lines, $i = 1, \ldots, m$.}
	\label{fig:example_shape}
\end{figure}

We now wish to derive of version of \eqref{eq:w_coefficientwise_relaxed} which does not require the evaluation of the shift value~$\zeta$.
To simplify the notation we introduce
\begin{alignat*}{2}
	\supent{0} 
	&
	\coloneqq \sup_{i \in J_0(w)} \paren[auto]\{\}{z_i(w)},
	&
	\infent{01}
	&
	\coloneqq \inf_{i \in J_{01}(w)} \paren[auto]\{\}{z_i(w)- \alpha\, w_i},
	\\
	\infent{1}
	&
	\coloneqq \inf_{i \in J_1(w)} \paren[auto]\{\}{z_i(w)- \alpha},
	\qquad
	&
	\supent{01}
	&
	\coloneqq \sup_{i \in J_{01}(w)} \paren[auto]\{\}{z_i(w)- \alpha\, w_i}.
\end{alignat*}
Notice that we write $\inf$ and $\sup$ instead of $\min$ and $\max$ since some of the index sets may be empty.
Based on these quantities, \eqref{eq:w_coefficientwise_relaxed} can be equivalently written as
\begin{subequations}
	\label{eq:coefficientwise_relaxed_simplyfied}
	\begin{alignat}{2}
		w \in \bounds
		\label{eq:coefficientwise_relaxed_simplyfied_1}
		\\
		\supent{0}-\zeta 
		&
		\le 
		\varepsilon,
		\qquad
		&
		\zeta - \infent{01} 
		&
		\le 
		\varepsilon,
		\label{eq:coefficientwise_relaxed_simplyfied_2}
		\\
		\supent{01} - \zeta 
		&
		\le 
		\varepsilon,
		\qquad
		&
		\zeta - \infent{1} 
		&
		\le 
		\varepsilon.
		\label{eq:coefficientwise_relaxed_simplyfied_4}
	\end{alignat}
\end{subequations}
Pairwise summation shows that \eqref{eq:coefficientwise_relaxed_simplyfied} implies 
\begin{subequations}
	\label{eq:coefficientwise_relaxed_removed_zeta}
	\begin{alignat}{2}
		w \in \bounds
		\label{eq:coefficientwise_relaxed_removed_zeta_1}
		\\
		\supent{0}-\infent{01} 
		&
		\le 
		2\varepsilon,
		\qquad
		&
		\supent{0} - \infent{1} 
		&
		\le 
		2\varepsilon
		\label{eq:coefficientwise_relaxed_removed_zeta_2}
		\\
		\supent{01} - \infent{01} 
		&
		\le 
		2\varepsilon,
		\qquad
		&
		\supent{01} - \infent{1} 
		&
		\le 
		2\varepsilon
		\label{eq:coefficientwise_relaxed_removed_zeta_3}
	\end{alignat}
\end{subequations}
Conversely, if \eqref{eq:coefficientwise_relaxed_removed_zeta} holds, there exists $\zeta \in \R$ such that
\begin{equation*}
	\max\paren[auto]\{\}{\supent{01},\supent{0}} -\varepsilon \le \zeta \le \min\paren[auto]\{\}{\infent{01},\infent{0}} + \varepsilon.
\end{equation*}
It is easy to see that this $\zeta$ satisfies \eqref{eq:coefficientwise_relaxed_simplyfied_2}--\eqref{eq:coefficientwise_relaxed_simplyfied_4}.
For example, the first inequality in \eqref{eq:coefficientwise_relaxed_simplyfied_2} holds since
\begin{equation*}
	\supent{0} - \zeta \le \supent{0} - \max\paren[auto]\{\}{\supent{01},\supent{0}} +\varepsilon \le \supent{0} - \supent{0} + \varepsilon = \varepsilon.
\end{equation*}
Thus we have the equivalence of \eqref{eq:w_coefficientwise_relaxed} and \eqref{eq:coefficientwise_relaxed_removed_zeta}.

Since both \cref{alg:fista,alg:malitsky_prox_gradient} maintain $w \in \Delta_C \cap \bounds$ by construction, we stop the iterations as soon as \eqref{eq:coefficientwise_relaxed_removed_zeta_2} and \eqref{eq:coefficientwise_relaxed_removed_zeta_3} holds.
To this end, we define the error function
\makeatletter
\ltx@ifclassloaded{svjour3}{%
\begin{multline}
	e(w) 
	\coloneqq 
	\\
	\frac{1}{2} \max\paren[auto]\{\}{\supent{0}-\infent{01}, \; \supent{0}-\infent{1}, \; \supent{01}-\infent{01}, \; \supent{01}-\infent{1}}
	.
	\label{eq:error}
\end{multline}
}{%
\begin{equation}
	e(w) 
	\coloneqq 
	\frac{1}{2} \max\paren[auto]\{\}{\supent{0}-\infent{01}, \; \supent{0}-\infent{1}, \; \supent{01}-\infent{01}, \; \supent{01}-\infent{1}}
	.
	\label{eq:error}
\end{equation}
}
\makeatother
Notice that $e(w)$ is finite since both $J_0(w) \cup J_{01}(w)$ and $J_{01}(w) \cup J_1(w)$ are non-empty.
This is due to \eqref{eq:assumption_C} and the feasibility of $w$.
Thus we obtain that \eqref{eq:coefficientwise_relaxed_removed_zeta} holds if any only if 
\begin{subequations}
	\label{eq:stopping_criterion}
	\begin{equation}
		e(w) \le \varepsilon
		.
	\end{equation}
	In our experiments, we choose the relative tolerance
	\begin{equation}
		\varepsilon \coloneqq 10^{-10} \cdot \paren[big](){\max_{i=1,\ldots, n} \paren[auto]\{\}{z_i(w)} - \min_{i=1,\ldots, n} \paren[auto]\{\}{z_i(w)}}.
	\end{equation}
\end{subequations}
As a safety measure, the algorithms are also terminated in case a maximum number of iterations is reached, which is described separately for each numerical example below.

%------------------------------------------------------------------
\subsection{Initial Comparison of Algorithms}
\label{subsec:comparison_algorithms}
%------------------------------------------------------------------

The aim of this section is to compare \cref{alg:fista} (FISTA) and \cref{alg:malitsky_prox_gradient} (PGMA) by solving an instance of problem \eqref{eq:decomposition_into_F_G_H_primal}--\eqref{eq:definitions_of_G_H_primal} both for the regularized case ($\alpha > 0)$ as well as the unregularized case ($\alpha = 0$).
We implemented \cref{alg:fista,alg:malitsky_prox_gradient} in \matlab~R2020a and ran them on an Ubuntu~18.04 machine with an Intel(R) Core(TM) i7-6700~CPU and 16~GiB of RAM.

We utilize the following example.
\begin{example}[Lotka-Volterra predator-prey model]
	\label{ex:predator_prey}
	We consider the nonlinear system
	\begin{equation}
		\label{eq:Lotka-Volterra}
		\begin{pmatrix}
			\dot{y_1}(t) 
			\\
			\dot{y_2}(t)
		\end{pmatrix}
		= 
		\begin{pmatrix*}[r]
			p_1 \, y_1(t) - p_3 \, y_1(t) \, y_2(t) 
			\\
			-p_2 \, y_2(t) + p_4 \, y_1(t) \, y_2(t) 
		\end{pmatrix*}
		,
	\end{equation}
	where $(y_1,y_2)$ denotes the number of prey/predators.
	The overall aim is to identify the parameter vector $p = (p_1, \dots, p_4)^\transp > 0$ by observations of the state component $y_1(t)$ for certain times $t \in [0,100]$.
	The selection of observation times is one of the design decisions to be made.
	Furthermore we also can vary the initial condition $y(0) = (y_{10},y_{20})^\transp \in [0,10]^2 $.
	This results in the design space $X = [0,10] \times [0,10] \times [0,100] \subset \R^3$.
	We discretize $X$ into $m = 30 \times 30 \times 30 = \num{27000}$ equal-sized cuboids.

	We consider various of the design criteria \eqref{eq:criteria} and in each case, identify optimal experimental conditions by approximately solving \eqref{eq:OED_regularized}--\eqref{eq:bounded_in_L2}.
	To set up an elementary Fisher information matrix (FIM) $\Upsilon(y_{10},y_{20},t)$, we proceed in the following way.
	We solve \eqref{eq:Lotka-Volterra} starting with initial conditions $y(0) = (y_{10},y_{20})^\transp$.
	The sensitivity derivatives $z(\cdot) \in \R^{2 \times 4}$ of the trajectory $y(\cdot)$ w.r.t.\ the parameter vector $p$ is given by the linear system
	\begin{equation}
		\label{eq:linearized_Lotka-Volterra}
		\begin{pmatrix}
			\dot{z_1}(t) 
			\\
			\dot{z_2}(t)
		\end{pmatrix}
		= 
		\begin{bmatrix*}[r]
			p_1 - p_3 \, y_2(t) & - p_3 \, y_1(t)
			\\
			p_4 \, y_2(t) & -p_2 + p_4 \, y_1(t)
		\end{bmatrix*}
		\begin{pmatrix}
			z_1(t) \\ z_2(t)
		\end{pmatrix}
		+
		\begin{bmatrix}
			y_1 & 0 & -y_1 y_2 & 0 \\
			0 & -y_2 & 0 & y_1 y_2
		\end{bmatrix}
		,
	\end{equation}
	endowed with initial conditions $z(0) = 0 \in \R^{2 \times 4}$.
	This follows easily from the implicit function theorem.
	The elementary FIM is then given by
	\begin{equation*}
		\Upsilon(y_{10},y_{20},t)
		=
		z_1(t)^\transp z_1(t)
		\in \R^{4 \times 4}
		.
	\end{equation*}
	Recall that we take $\Upsilon_i$ on each cell of the discretized design space $X$ to be its value in the midpoint.
	In order to precompute all \num{27000}~FIMs, we therefore need to solve 900 initial value problems \eqref{eq:Lotka-Volterra} and \eqref{eq:linearized_Lotka-Volterra} and evaluate each trajectory $z_1(\cdot)$ in 30~points in time.
	In practice, we solve the nonlinear forward problem \eqref{eq:Lotka-Volterra} and the sensitivity problem \eqref{eq:linearized_Lotka-Volterra} simultaneously for any given choice of initial conditions using an explicit Euler time stepping scheme with step size $\Delta t = 0.1$.
	We utilize the nominal parameter value $p = (0.1, 0.4, 0.02, 0.02)^\transp$.

	Each cell~$E_i$ in the design space has a volume of $\abs{E_i} = \frac{\num{10000}}{\num{27000}} = \frac{10}{27}$. 
	Finally we set $C = 5\cdot 10^{-4} \abs{X} = 5$, so that the experimental budget allows us to allocate a total weight of $0.05\%$ of all admissible experiments.
	This is equivalent to having 13.5~cells out of \num{27000} with weight~$w_i = 1$. 
\end{example}

In the experiment in this section, we choose $q = 0$ which amounts to the logarithmic D-criterion.
An optimal weight vector~$w$ in case $\alpha = 0$ can be seen in \cref{fig:run1_optimal_w}.
Here we have to point out that neither its support nor the solution itself is necessarily unique.
We mention that an a~posteriori sparsification approach similar as in the proof of \cite[Lemma~3.10]{NeitzelPieperVexlerWalter2018} could be used.
However, \cref{theorem:optimality_conditions}~\cref{item:optimality_conditions_4} does not apply since it is valid only in the continuous setting.

\begin{figure}[htb]
	\centering
	\includegraphics[width=0.8\textwidth]{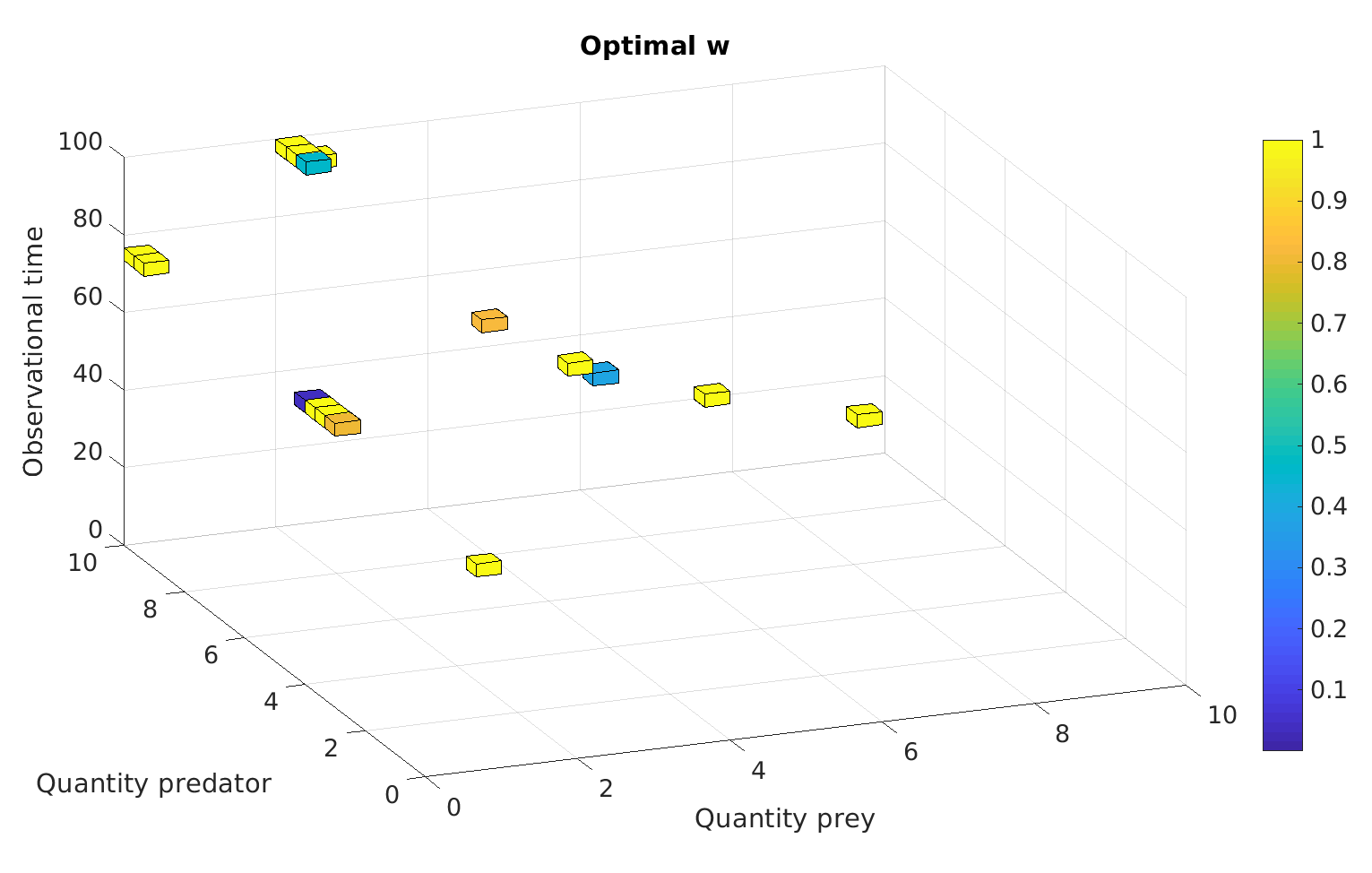}
	\caption{Solution of \cref{ex:predator_prey} for the logarithmic D-criterion ($q = 0$ in \eqref{eq:criteria}) with $\alpha = 0$. Entries equal to zero are not shown.}
	\label{fig:run1_optimal_w}
\end{figure}

Due to the fact that the cost per iteration for FISTA and PGMA is different, essentially due to different step size selection mechanisms, we utilize the elapsed CPU time as our performance criterion.
In each case, the setup time for the precomputation of the FIMs $\Upsilon_i$ is identical and it is not included.

\begin{figure}[htb]
	\centering
	\subfloat[]{%
		\includegraphics[width=0.45\textwidth]{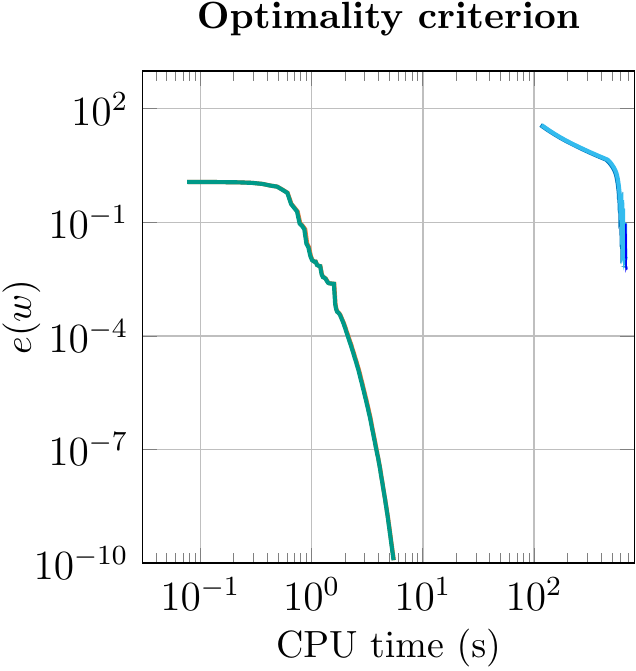}
		\label{fig:run1_pointwise_error}
		}%
	\hfill%
	\subfloat[]{%
		\includegraphics[width=0.45\textwidth]{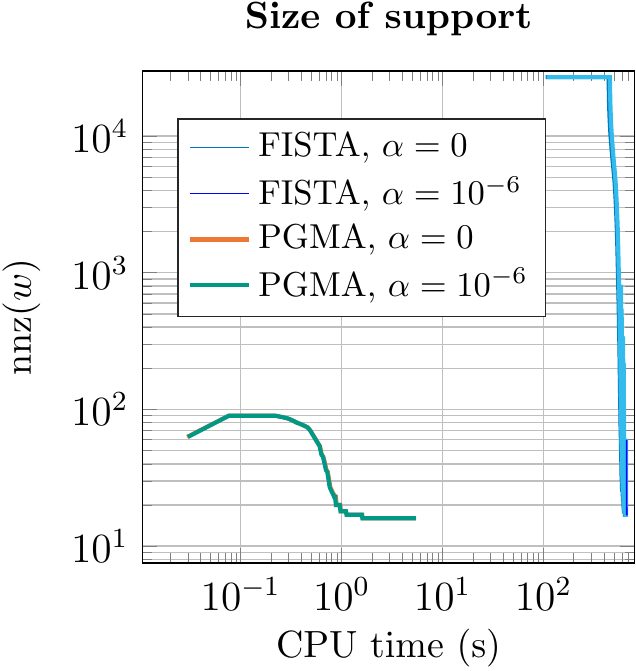}
		\label{fig:run1_support}
		}%
		\caption{\cref{ex:predator_prey}, comparison of algorithms by (a) value of $e(w)$ from \eqref{eq:error} and (b) size of the support of $w$.}
	\label{fig:run1_overview}
\end{figure}

\cref{fig:run1_pointwise_error} compares both algorithms for $\alpha = 0$ and $\alpha = 10^{-6}$ with respect to the decay of the error function \eqref{eq:error} over CPU time.
We clearly observe the PGMA outperforms FISTA by far.
The latter was stopped after \num{5000}~iterations without coming close to the desired tolerance.
By contrast, PGMA reached the stopping criterion \eqref{eq:stopping_criterion} after about 5.6~CPU seconds and within 300~iterations.

A further aspect of comparison of these algorithms is presented in \cref{fig:run1_support}, where the size of the support of the iterates is shown.
Again PGMA outperforms FISTA, although the latter reaches an iterate with almost the same degree of sparsity as the optimal solution after almost \num{5000}~iterations.
A partial explanation for the superiority of PGMA is based on the fact that it utilizes larger step sizes, in particular with regard to $\gamma_k$; see \cref{fig:run1_stepsizes}.
For FISTA, we observe that $\gamma_k$ is quite small and constant. 
Notice that FISTA does not allow $\gamma_k$ to increase, and its size is determined by the initial guess.
Furthermore $\theta_k$ is chosen a-priori, thus practically there is no adaptivity in the choice of the step sizes in FISTA.
PGMA starts from the same initial guess and thus also exhibits small values of $\gamma_k$ initially.
In constrast to FISTA, however, $\gamma_k$ is allowed to increase and does so until a reasonable magnitude is reached.
It is then about five orders of magnitude larger than for FISTA.
Notice also that $A$ is only locally Lipschitz, which suffices to give theoretical convergence guarantees for PGMA but not for FISTA.

\begin{figure}[htb]
	\centering
	\includegraphics[width=\linewidth]{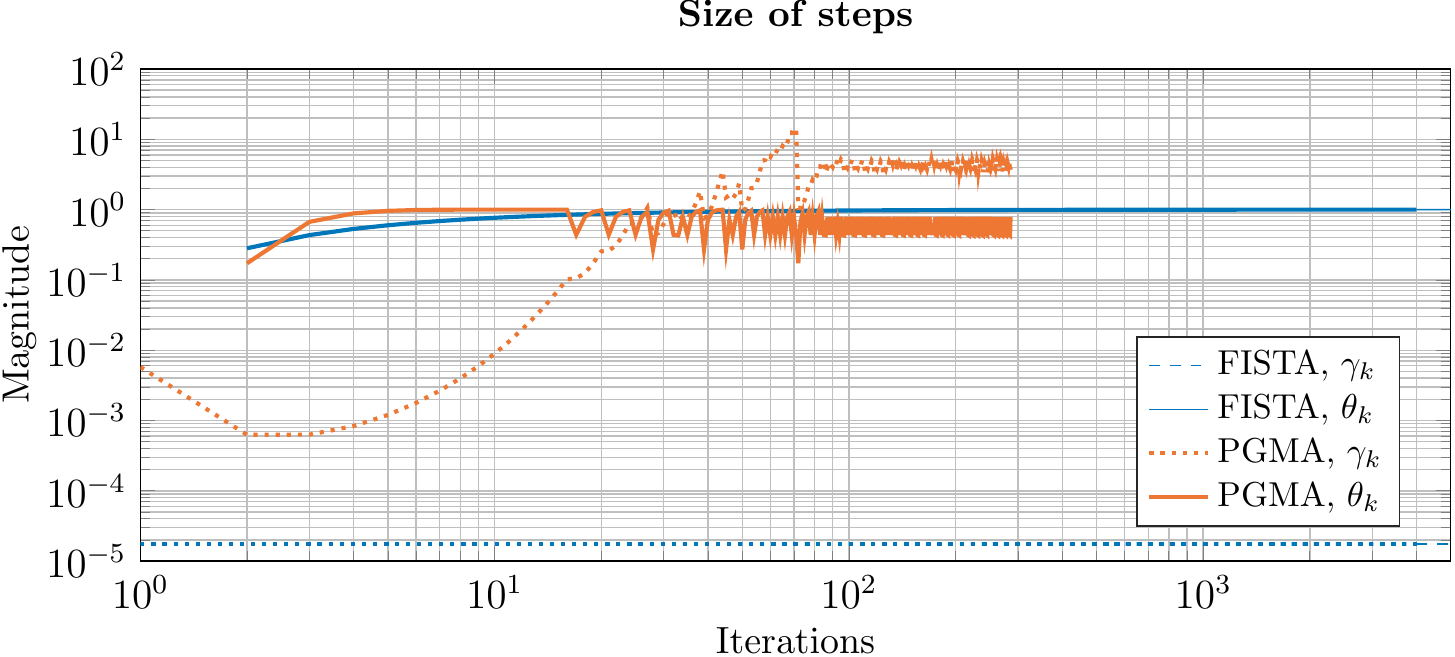}
	\caption{\cref{ex:predator_prey} with $\alpha = 0$, comparison of algorithms by step sizes $\gamma_k$ and $\theta_k$.}
	\label{fig:run1_stepsizes}
\end{figure}

Returning to \cref{fig:run1_pointwise_error}, we observe that both algorithms are only affected in a minor way by the choice of the regularization parameter~$\alpha$.
Apparently, the non-uniqueness of the optimal weight in the case $\alpha = 0$ does not represent an obstacle to the convergence.

%------------------------------------------------------------------
\subsection{Comparison of Different Design Criteria}
\label{subsec:comparison_design_criteria}
%------------------------------------------------------------------

In this section we compare the convergence behavior of PGMA for solving \cref{ex:predator_prey} with various design criteria, i.e., for different choices of $q$, as well as for various values of the regularization parameter~$\alpha$.
We chose $\alpha = 0$ and $\alpha = 10^{-3}$ and five different quantities of $q$ in order to observe different sparsity patterns.
All used combinations as well as the resulting size of the support of the solution is presented in \cref{tab:comparison_of_support_predator_prey}.
Recall that $q = 0$ refers to the logarithmic D-criterion while $q = 1$ denotes the A-criterion.
Furthermore we approximate the E-criterion by setting $q = 10$.

Again the comparison of both \cref{alg:fista,alg:malitsky_prox_gradient} is based on CPU time, excluding the setup time for the elementary FIMs~$\Upsilon_i$.

\begin{table}[htpb]
	\centering
	\caption{Comparison of the size of the support for \cref{fig:example_shape} with different design criteria.}
	\label{tab:comparison_of_support_predator_prey}
	\begin{tabular}{llllll}
		\toprule
		$\alpha$  & $q = 0$ &  $q=\frac{1}{2}$ &    $q = 1$ & $q=2$ &    $q = 10$ \\
		\midrule
		$0$       &      16 &          16 & 15 & 16 &          16 \\
		$10^{-3}$ &      16 & \num{27000} & \num{27000}& \num{27000}& \num{27000} \\
		\bottomrule
	\end{tabular}
\end{table}

The convergence results are presented in \cref{fig:run2_overview}.

\begin{figure}[htb]
	\centering
	\subfloat[]{%
		\includegraphics[width=0.45\textwidth]{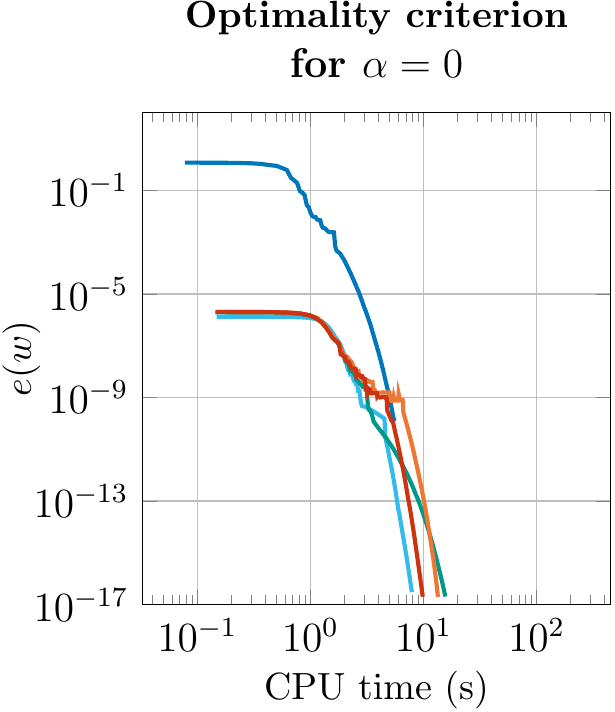}
		\label{fig:run2_error_alpha_0}
		}%
	\hfill%
	\subfloat[]{%
		\includegraphics[width=0.45\textwidth]{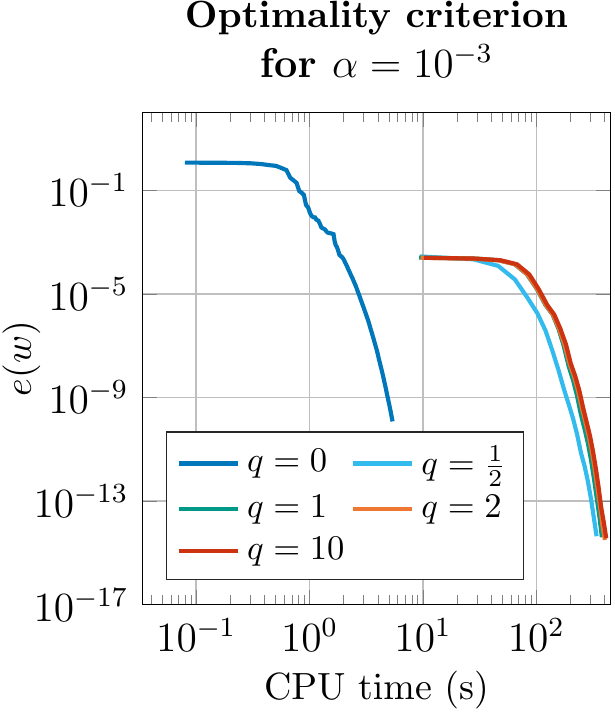}
		\label{fig:run2_error_alpha_greater_0}
		}%
		\caption{\cref{ex:predator_prey} solved with PGMA. Comparison of design criteria for different choices of $\alpha$ by the value of $e(w)$ from \eqref{eq:error}.}
	\label{fig:run2_overview}
\end{figure}

First we observe that in all variants PGMA, reached an iterate which fulfilled the stopping criterion \eqref{eq:stopping_criterion}.
Furthermore \cref{fig:run2_error_alpha_0,fig:run2_error_alpha_greater_0} indicate that in case $q = 0$ there are only minor differences in computational time for different values of~$\alpha$. 
Indeed PGMA took $5.8$ respectively $5.1$~seconds to converge for $\alpha = 0$ and $\alpha = 10^{-3}$.
In case $q > 0$ the computational time for $\alpha = 0$ and $\alpha = 10^{-3}$ is  about 10 and 400~seconds respectively, almost independently of the particular choice of $q$.

%------------------------------------------------------------------
\section{Acceleration Strategies}
\label{sec:simplicial_decomposition}
%------------------------------------------------------------------

In this section we present four acceleration strategies for the aforementioned algorithms.
All of them replace the solution of the discrete problem \eqref{eq:discretized_problem} by a sequence of smaller problems of the same or similar type.
Consequently, FISTA and PGMA can both serve as inner solvers but based on the findings of the previous section, we focus on PGMA.

The motivation for further acceleration is based on the interest of solving discretized OED problems for large values of~$m$ which arise from high dimensional design spaces and/or fine discretization.
We exploit the polyhedral structure of the feasible set in \eqref{eq:discretized_problem} and utilize that an optimal solution can be represented by a certain linear combination of vertices of $\Delta_C \cap \bounds$ or $\Delta_C$.

%------------------------------------------------------------------
\subsection{Simplicial Decomposition}
\label{subsec:SD}
%------------------------------------------------------------------

The first strategy we consider is the well known Simplicial Decomposion (\SD) as presented in \cite{Patriksson2009,VonHohenbalken1977}.
This approach utilizes that each point in the polyhedron $\Delta_C \cap \bounds$ can be written as a convex combination of vertices of $\Delta_C \cap \bounds$.
We denote these vertices by $s_i$, $i = 1, \ldots, r$.
Consequently, any $w \in \Delta_C \cap \bounds$ can be expressed as
\begin{equation}
	\label{eq:representation_SD}
	w = \sum_{i=1}^r \lambda_i \, s_i \text{ for some } \lambda_i \ge 0 \text{ satisfying } \sum_{i=1}^r\lambda_i = 1.
\end{equation}
Note that by straightforward geometric reasoning we obtain 
\begin{equation*}
	r \ge \max \paren[auto]\{\}{\nnz(w)-\neo(w),1}.
\end{equation*}
Recall that $\nnz$ and $\neo$ denote the numbers of non-zero entries, and of entries equal to one, respectively; see \eqref{eq:nnz_neo_and_others}.

In a nutshell, the simplicial decomposition algorithm restricts the search for an optimal weight vector in $\Delta_C \cap \bounds$ to the convex hull of a few active vertices of $\Delta_C \cap \bounds$.
In each iteration, a new vertex is added to the active set, and unused ones are removed.
The vertex added is one which yields the minimal value of the directional derivative of the objective.

The complete simplicial decomposition algorithm is described in \cref{alg:SD}. 

\begin{algorithm}[ht]
	\caption{Simplicial Decomposition (\SD)}
	\label{alg:SD}
	\begin{algorithmic}[1]
		\setcounter{ALC@unique}{0}
		\REQUIRE $\paren[auto]\{\}{s^{(0),i}}_{i=1}^{r^{(0)}}$ (vertices of $\Delta_C \cap \bounds \subset \R^m$) such that $\displaystyle \Lambda\paren[Bigg](){\sum_{i=1}^{r^{(0)}} s^{(0),i}} \succ 0$
		\STATE Set $R^{(0)} \coloneqq \paren[auto][]{s^{(0),1} \cdots\, s^{(0),r^{(0)}}}$ 
		\STATE Set $w^{(0)} \coloneqq \paren[auto](){r^{(0)}}^{-1} R^{(0)} \mathbf{1}$ and $k \coloneqq 1$ 
		\COMMENT{$w^{(0)}$ is the average of the $s^{(0),i}$}
		\REPEAT
		\STATE Compute a vertex $s^{(k)} \in \R^m$ which solves the linear program
		\begin{equation}
			\text{Minimize } s^\transp \Lambda^* \nabla F_q(\Lambda w^{(k)}) \quad \text{s.t. } s \in \Delta_C \cap \bounds
			\tag{\OUTSD}
			\label{eq:SP_SD}
		\end{equation}
		\STATE 
		Set $\bar R \coloneqq [R^{(k)}, \; s^{(k)}]$ 
		\label{item:SD_appending_vertex}
		\STATE Compute $\lambda^{(k+1)}$ as an inexact solution of the so-called restricted master problem
		\label{item:SD_RMP}
		\begin{equation}
			\text{Minimize } F_q(\Lambda \bar R \lambda) \quad \text{s.t. } \lambda \in \Delta_1
			\tag{\RMPSD}
			\label{eq:RMP_SD}
		\end{equation}
		To this end, we use \cref{alg:Torsney} with tolerance~$\delta$ in \eqref{eq:stopping_criterion_Torsney}.
		\STATE 
		\label{item:SD_rounding1}
		Set $\lambda^{(k+1)}_i \coloneqq \begin{cases} \lambda^{(k+1)}_i & \text{if } \lambda^{(k+1)}_i > \sqrt{\delta}/2 \\ 0 & \text{else }  \end{cases}$
		\STATE 
		\label{item:SD_rounding2}
		Set $\lambda^{(k+1)} = \norm{\lambda^{(k+1)}}_1^{-1} \lambda^{(k+1)}$
		\STATE
		Set $R^{(k+1)} \coloneqq \bar R[\,:\,,(\lambda^{(k+1)} > 0)] $
		\label{item:SD_purging}
		\COMMENT{purge unused vertices}
		\STATE 
		Set $w^{(k+1)} \coloneqq \bar R \lambda^{(k+1)}$
		\IF{the inital guess of $\lambda$ in \eqref{eq:RMP_SD} fulfills \eqref{eq:stopping_criterion_Torsney} in \cref{alg:Torsney} with tolerance~$\delta$}
		\STATE Set $\delta \coloneqq \delta / 10$
		\ENDIF
		\STATE 
		Set $k \coloneqq k+1$
		\UNTIL{stopping criterion \eqref{eq:stopping_criterion} is fulfilled}
		\label{alg:SD_stopping_criterion}
	\end{algorithmic}
\end{algorithm}

It is important to point out that we restrict our tests to cases $q \in [0,1]$ and $\alpha =0$, thus in particular only problems without regularization.
This apporach is based on the fact that we do not provide algorithms for the solution of the inner problem \eqref{eq:RMP_SD} in more general cases.

The initial values $\paren[auto]\{\}{s^{(0),i}}_{i=1}^{r^{(0)}}$ for \cref{alg:SD} are determined in the following way.
We first solve \eqref{eq:SP_SD} with $\Lambda^* \nabla F_q(\Lambda w)$ where~$w$ is a feasible vector with all entries equal.
This yields in $s^{(0),1}$.
If $\Lambda s^{(0),1}\succ 0$ does not hold, we iteratively insert further vertices of $\Delta_C \cap \bounds$ randomly until this condition is fulfilled for the sum of all considered vertices.\\

Next we discuss how the subproblems \eqref{eq:SP_SD} and \eqref{eq:RMP_SD} can be solved.

The linear program \eqref{eq:SP_SD} can be solved explicitly by sorting the entries of $\Lambda^* \nabla F_q(\Lambda w^{(k)})$ and assigning the entries equal to one and potentially entries in $(0,1)$ in $s^{(k)}$ in a greedy fashion.
In case several entries of $\Lambda^* \nabla F_q(\Lambda w^{(k)})$ agree, we make sure that the corresponding entries of $s^{(k)}$ agree as well.

Concerning the restricted master problem \eqref{eq:RMP_SD}, notice that this problem varies in dimension from iteration to iteration, depending on the number of active vertices, i.e., columns of $\bar R$.
For the subsequent discussion we omit the iteration index and denote this dimension by~$r$.
In order to compare \eqref{eq:RMP_SD} with the settings as in \cref{sec:Discretization_DG0}, we introduce the FIM associated with the $j$-th vertex in the set of active vertices $\bar R$ as
\begin{equation}
	\bar \Upsilon_j \coloneqq \sum_{i= 1}^m \Upsilon_i \abs{E_i} \bar R_{i,j} = \Lambda \bar R[\,:\,,j]
	.
\end{equation}
It is worth noting that $\bar \Upsilon_j$ is symmetric and positive semi-definite since $\Upsilon_i$ has this property and the entries in $\bar R$, consisting of vertex coordinates of $\Delta_C \cap \bounds$, are non-negative.
Next we represent the synthesis operator \eqref{eq:synthesis_operator_discrete} in terms of the barycentric coordinates $\lambda \in \R^r$:
\begin{equation}
	\bar \Lambda \lambda 
	\coloneqq 
	\Lambda \bar R \lambda 
	= 
	\sum_{j=1}^r \lambda_j \, \bar \Upsilon_j 
	.
	\label{eq:definition_bar_Lamba_SD}
\end{equation}
The weight constraint is expressed as
\begin{equation}
	C = \mathbf{1}^\transp M w = \mathbf{1}^\transp M \bar R \lambda = \sum_{i = 1}^m \abs{E_i} \sum_{j = 1}^r \bar R_{i,j} \lambda_j = \sum_{j = 1}^r  \lambda_j  \sum_{i = 1}^m \abs{E_i} \bar R_{i,j} = C \sum_{j = 1}^r  \lambda_j 
\end{equation}
and thus it reduces to $\mathbf{1}^\transp \lambda = 1$.
To summarize, \eqref{eq:RMP_SD} can be written as 
\begin{equation}
	\label{eq:reformulated_RMP_SD}
	\begin{aligned}
		\text{Minimize} \quad & F_q(\bar \Lambda \lambda) , \quad \lambda \in \R^r \\
		\text{s.t.} \quad & \lambda \ge 0 \\
		\text{and} \quad & \mathbf{1}^\transp \lambda = 1
		.
	\end{aligned}
\end{equation}
Notice that, in contrast to \eqref{eq:discretized_problem}, \eqref{eq:reformulated_RMP_SD} does not feature pointwise upper bounds on the variable $\lambda$.

Unfortunately, the first-order methods discussed in \cref{sec:numerical_results} are not efficiently applicable for the solution of the resctriced problems \eqref{eq:reformulated_RMP_SD}.
The reason is that both \cref{alg:fista} (FISTA) and \cref{alg:malitsky_prox_gradient} (PGMA) require the orthogonal projection onto $\Delta_C \cap \bounds$.
In \eqref{eq:reformulated_RMP_SD}, the appropriate inner product is given by  
\begin{equation}
	\paren[auto](){\bar R \lambda_1, \bar R \lambda_2}_M =  \lambda_1^\transp \bar R^\transp M \bar R \lambda_2 = \lambda_1 ^\transp \bar M \lambda_2
\end{equation}
with $\bar M \coloneqq \bar R^\transp M \bar R$.
In contrast to $M$, the reduced inner product matrix $\bar M$ is, in general, not diagonal.
Therefore, the evaluation of the projection cannot be achieved as in \cref{alg:projection_feasible_set} but it becomes significantly more expensive.
In addition, $\bar M$ is, in general, only positive semi-definite.

In order to present numerical results for \cref{alg:SD}, we resort to a simple solver for the inner problem \eqref{eq:reformulated_RMP_SD}.
As mentioned in the introduction, an approach described in the literature is Torsney's multiplicative algorithm; see \cref{alg:Torsney}.
It was described and analyzed in \cite{SilveyTitteringtonTorsney1978:1,Torsney2009:1,Yu2010:1} and specifically in \cite{UcinskiPatan2007,HerzogRiedelUcinski2017:1} in the context of simplicial decomposition.
We point out that \cref{alg:Torsney} is only applicable when $\alpha = 0$, i.e., for the unregularized problem, and for $q \in [0,1]$ (cf.\ \cite{Yu2010:1}).

\begin{algorithm}[ht]
	\caption{Torsney's algorithm for \eqref{eq:RMP_SD} ($\alpha = 0$ and $q \in [0,1]$)}
	\label{alg:Torsney}
	\begin{algorithmic}[1]
			\setcounter{ALC@unique}{0}
		\REQUIRE $\lambda^{[0]} \in \R^r$
		\STATE 
			Set $j \coloneqq 0$
		\REPEAT
			\STATE 
				Set 
				\begin{equation*}
					\lambda^{[j+1]}_i 
					\coloneqq
					\lambda^{[j]}_i \frac{\nabla F_q (\bar \Lambda \lambda^{[j]}) \dprod \bar \Upsilon_i}{\nabla F_q (\bar \Lambda \lambda^{[j]}) \dprod \bar \Lambda \lambda^{[j]}}, \quad i = 1, \ldots, r
				\end{equation*}
			\STATE 
				Set $j \coloneqq j + 1$
				\UNTIL{$\lambda^{[j]}$  fulfills the stopping criterion \eqref{eq:stopping_criterion_Torsney}}
		\label{alg:Torsney_stopping_criterion}
	\end{algorithmic}
\end{algorithm}

\cref{alg:Torsney} is very easy to implement. 
Notice that we denote the iteration counter $\cdot^{[j]}$ with square brackets to avoid a confusion with the outer iteration index $\cdot^{(k)}$ in \cref{alg:SD}.
Due to the multiplicative nature of \cref{alg:Torsney} we have to initialize it with $\lambda^{[0]}$ strictly positive.
In the very first call to \cref{alg:Torsney}, we initialize $\lambda^{[0]}$ as a multiple of an all-ones vector.
In subsequent calls to \cref{alg:Torsney}, we utilize the final iterate of the previous call to create a more informed initial guess.
To be precise, we remove unused coordinates, initialize the new coordinate with $1/r$ and rescale the remaining entries so that the total sum equals one.

We stop \cref{alg:Torsney} as soon as 
\begin{equation}
	\label{eq:stopping_criterion_Torsney}
	\lambda_i^{[j]} \, \paren[big](){\max_{k = 1,\ldots,r} \nabla F_q (\bar \Lambda \lambda^{[j]})\dprod \bar \Upsilon_k - \nabla F_q (\bar \Lambda \lambda^{[j]})\dprod \bar \Upsilon_i}
	\le
	\delta
\end{equation}
holds (cf. also \cref{fig:stopping_criterion_torsney}) or a maximal number of iterations is reached.

\begin{figure}[htb]
	\centering
	\setlength\figureheight{0.3\textwidth}
	\setlength\figurewidth{0.3\textwidth}
	\input{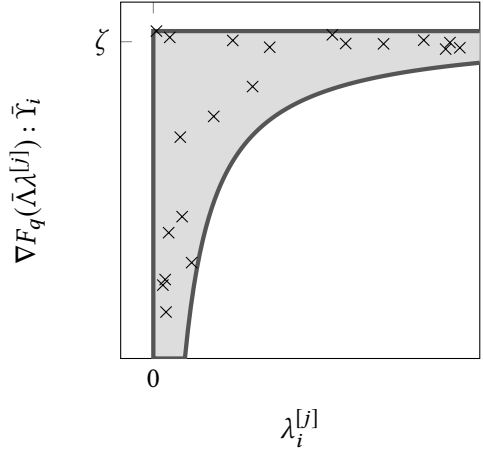}
	\caption{Illustration of the stopping criterion \eqref{eq:stopping_criterion_Torsney}. All pairs $\paren[auto](){\lambda_i^{[j]},\nabla F_q (\bar \Lambda \lambda^{[j]})\dprod \bar \Upsilon_i}$ must lie in the light gray area or on the bold lines, $i = 1, \ldots, r$.}
	\label{fig:stopping_criterion_torsney}
\end{figure}

We conclude the description of the simplicial decompositon \cref{alg:SD}, combined with \cref{alg:Torsney} as inner solver, by noting that it requires a rounding strategy; see \cref{item:SD_rounding1,item:SD_rounding2} in \cref{alg:SD}.
We first set all entries of $\lambda^{[j]}$ which are less then $\sqrt\delta/2$ to zero and later rescale the remaining ones such that all entries of the resulting vector sum up to~1.
Therefore we can only expect to obtain inexact solutions of \eqref{eq:discretized_problem}.
Furthermore we observe that due to the non-uniqueness of the solutions of \eqref{eq:RMP_SD}, purging vertices in \cref{alg:SD} does not necessarily reduce the subproblem to its minimal size.

In spite of these obstacles, we included the classical simplicial decomposition \cref{alg:SD} for comparison with more efficient accelerated solvers described in the following subsection.

\subsection{Methods Based on Active Set Strategies}
\label{subsec:GASS}

In this subsection we consider three further acceleration strategies which are based on a common framework given in \cref{alg:meta_active_set}.
We term this framework the generic active set strategy (GASS).
In comparison with the simplicial decomposition (SD)~\cref{alg:SD}, the inner problems to be solved in each iteration are different.
The three variants of GASS we consider are termed the simplicial decomposition modified (SDM), simplicial decomposition modified with heuristics (SDMH), and primal-dual semiactive set strategy (PDSAS).

To simplify the notation we define $\Deltafree_C \coloneqq \setDef{w \in \R^m}{\inner{1}{w}_M = C}$.
Furthermore we have $\bounds^- \coloneqq \setDef{w \in \R^m}{w \ge 0}$ and $\bounds^+ \coloneqq \setDef{w \in \R^m}{w \le 1}$ and thus $\bounds = \bounds^- \cap \bounds^+$.
The main difference is that this algorithm represents its iterates $w^{(k)}$ as bound constrained convex combinations of vertices of $\Delta_1 = \Deltafree_1 \cap \bounds^-$, rather than arbitrary convex combinations of vertices of $\Deltafree_C \cap \bounds$.
In other words, any $w$ in $\Deltafree_C \cap \bounds$ can be written as
\begin{equation}
\label{eq:representation_modified_SD}
w = \sum_{i=1}^r \lambda_i \, s_i \text{ for some } 0 \le \lambda_i \le 1 \text{ satisfying } \sum_{i=1}^r \abs{E_i} \, \lambda_i = C,
\end{equation}
where $s_i$, $i = 1, \ldots, r$ are vertices of $\Delta_1$.
The idea of applying this decomposition is based on previous work by \cite{Rutenberg1970} (cf.\ also \cite{Hsia1974}) and \cite{Marin1995}.
Notice that the representation \eqref{eq:representation_modified_SD} requires $r \ge \nnz(w)$, which is clearly larger than in \eqref{eq:representation_SD}, which was used in \cref{alg:SD}.

In order to better understand the combinatorial effort for solving \eqref{eq:discretized_problem}, we have to compare the number of vertices in $\Deltafree_C \cap \bounds$ and in $\Delta_1$.
The latter is clearly equal to~$m$.
We do not attempt to specify the exact number of vertices for $\Deltafree_C \cap \bounds$ here.
However, in the special case $\abs{E_i} \equiv \abs{E}$ for all $i = 1, \ldots, m$ and $C/\abs{E}$ is integer, then there are precisely 
\begin{equation*}
\binom{m}{C/\abs{E}}
\end{equation*}
vertices in $\Delta_C \cap \bounds$, which is generally much larger than~$m$.

The generic active set strategy in \cref{alg:meta_active_set} utilizes the representation \eqref{eq:representation_modified_SD}.
The general idea is to split the constraints describing the feasible set $\bounds$ w.r.t.\ the bound constraints into two parts. 
We thus obtain two sets $\bounds_{\text{inner}}$ and $\bounds_{\text{outer}}$ satisfying $\bounds = \bounds_{\text{inner}} \cap \bounds_{\text{outer}}$.
 
By replacing the representation \eqref{eq:representation_SD} with \eqref{eq:representation_modified_SD} we have to identify a larger number of active vertices out of a much smaller set of admissible vertices.
As a consequence, we expect \cref{alg:meta_active_set} to perform better in terms of the number of outer iterations, at the mild expense of having bound constraints in the inner problem.

In this subsection we say that a vertex of $\Delta_1$ is active if the corresponding entry of $w$ is active regarding the lower bounds, e.g. the entry is equal zero.
Note that this is different from the simplicial decomposition approach discussed in \cref{subsec:SD}, where active vertices corresponded to one ore more entries of $w$ being positive.

\begin{algorithm}[ht]
	\caption{Generic Active Set Strategy (GASS)}
	\label{alg:meta_active_set}
	\begin{algorithmic}[1]
		\setcounter{ALC@unique}{0}
		\REQUIRE $w^{(0)} \in \Delta_C \cap \bounds$ such that $\Lambda w^{(0)} \succ 0$
		\REQUIRE splitting of the feasible set w.r.t.\ the pointwise bound constraints $\bounds_{\text{inner}} \cap \bounds_{\text{outer}} = \bounds$
		\STATE Set $k = 0$
		\STATE Set $\AA^{(0)} = \setDef{1 \le i \le m}{w_i = 0}$ 
		\REPEAT
		\STATE Outer problem: 
		\begin{equation}
			\text{ Find the set of active vertices }\AA^{(k+1)} 
			\tag{\OUTGASS}
			\label{eq:out_meta_active_set}
		\end{equation}
		\STATE Inner problem: Compute $w^{(k+1)}$ as an approximate solution of
		\begin{equation}
			\begin{aligned}
				\text{Minimize} \quad & F_q\paren[auto](){\Lambda w} + \frac{\alpha}{2}\norm{w}_M^2,  \quad w \in \Deltafree_C \cap \bounds_{\text{inner}} \\ 
				\text{s.t.} \quad & w_i = 0 \quad \text{ for all } i \in \AA^{(k+1)}
			\end{aligned}
			\tag{\INGASS}
			\label{eq:in_meta_active_set}
		\end{equation}
		\STATE 
		Set $k \coloneqq k+1$
		\UNTIL{$w^{(k)} \in \bounds_{\text{outer}}$ and stopping criterion \eqref{eq:stopping_criterion} is fulfilled}
		\label{alg:meta_active_set_stopping_criterion}
	\end{algorithmic}
\end{algorithm}

The numerical effort of solving \cref{alg:meta_active_set} mainly depends on the splitting of the upper and lower bound constraint set $\bounds$ into $\bounds_{\text{inner}}$ and $\bounds_{\text{outer}}$.
Based on that, different strategies to determine $\AA^{(k+1)}$ in \eqref{eq:out_meta_active_set} as well as for solving \eqref{eq:in_meta_active_set} arise.
In \cref{tab:active_set_algorithms} we give an overview over the approaches we consider.

\begin{table}[htp]
\centering
\begin{tabular}{c c c c c}
	\toprule
	\multirow{2}{*}{
		Algorithm
	}
	&
	\multirow{2}{*}{
		$\bounds_{\text{outer}}$
	}
	&
	\multirow{2}{*}{
		$\bounds_{\text{inner}}$
	}
	&
	approach for
	&
	algorithm for 
	\\
	&
	&
	&
	\eqref{eq:out_meta_active_set}
	&
	\eqref{eq:in_meta_active_set}
	\\
	\midrule
	SDM
	&
	$\R^m$
	&
	$\bounds$
	&
	\cref{alg:inactive_entries_SDM}
	&
	\cref{alg:malitsky_prox_gradient}
	\\
	SDMH
	&
	$\R^m$
	&
	$\bounds$
	&
	\cref{alg:inactive_entries_SDMH}
	&
	\cref{alg:malitsky_prox_gradient}
	\\
	PDSAS
	&
	$\bounds^-$
	&
	$\bounds^+$
	&
	\cref{alg:inactive_entries_PDSAS}
	&
	\cref{alg:malitsky_prox_gradient}
	\\
	\bottomrule
\end{tabular}
\caption{Overview of active set algorithms (variants of \cref{alg:meta_active_set}).}
\label{tab:active_set_algorithms}
\end{table}

First we discuss the algorithms SDM and SDMH utilizing the strategy for the active sets described in \cref{alg:inactive_entries_SDM} as well as \cref{alg:inactive_entries_SDMH}.
Both algorithms are a straightforward modification of the classical SD~\cref{alg:SD}, considering that $w$ is a particular linear combination of vertices of $\Delta_1$.
SDM and SDMH differ in terms of the number of vertices inserted in each iteration, i.e., how much $\abs{\AA^{(k+1)}}$ can shrink compared to $\abs{\AA^{(k)}}$ in \eqref{eq:out_meta_active_set}.
While SDM frees exactly one vertex, SDMH frees all local minimizers of the gradient.
Therefore we expect that the size of the inner problems in SDM will be smaller, possibly at the expense of an increased number of outer iterations.

\begin{algorithm}[ht]
	\caption{Approach for \eqref{eq:out_meta_active_set} in SDM} 
	\label{alg:inactive_entries_SDM}
	\begin{algorithmic}[1]
		\setcounter{ALC@unique}{0}
		\REQUIRE previous active set $\AA^{(k)}$ and previous $w^{(k)}$
		\STATE Compute $s^{(k)} \in \paren[auto]\{\}{1, \dots, m}$ which fulfills
		\begin{equation*}
			s^{(k)} = \argmin \setDef[auto]{\paren[auto](){\Lambda^* \nabla F_q(\Lambda w^{(k)})}_i + \alpha\,w^{(k)}_i}{i \in \AA^{(k)}}
		\end{equation*}
		\STATE Set $\AA^{(k+1)} \coloneqq \setDef[auto]{i \in \paren[auto]\{\}{1, \dots, m}}{w_i^{(k)} = 0} \setminus \paren[auto]\{\}{s^{(k)}} $
	\end{algorithmic}
\end{algorithm}

\begin{algorithm}[ht]
	\caption{Approach for \eqref{eq:out_meta_active_set} in SDMH}
	\label{alg:inactive_entries_SDMH}
	\begin{algorithmic}[1]
		\setcounter{ALC@unique}{0}
		\REQUIRE previous active set $\AA^{(k)}$ and previous $w^{(k)}$
		\STATE Compute $S^{(k)} \subset \AA^{(k)}$ as the set of all indicies corresponding to local minimizers (with respect to $X$) of the mapping 
		\begin{equation*}
			\AA^{(k)} \ni i \mapsto \paren[auto](){\Lambda^* \nabla F_q(\Lambda w^{(k)})}_i + \alpha\,w^{(k)}_i \in \R.
		\end{equation*}
		\STATE Set $\AA^{(k+1)} \coloneqq \setDef[auto]{i \in \paren[auto]\{\}{1, \dots ,m}}{w_i^{(k)} = 0} \setminus S^{(k)}$
	\end{algorithmic}
\end{algorithm}

The solution of problem \eqref{eq:in_meta_active_set} by \cref{alg:malitsky_prox_gradient} benefits from a good initial guess.
	When $k = 0$, we use a constant vector on the active coordinates.
	In subsequent iterations of \cref{alg:meta_active_set}, we make use of the previous solution $w^{(k)}$ with unused coordinates removed and new entries initialized to~$0$.

The third algorithm we consider is a primal-dual semiactive set strategy (PDSAS).
Its name derives from the fact that a primal-dual active set approach similarly to \cite{BergouniouxItoKunisch1999,HintermuellerItoKunisch2002} is applied, but solely w.r.t.\ the lower bound constraints $\bounds^-$.
The reason to leave the upper bound constraints to \eqref{eq:in_meta_active_set} is that otherwise, the active constraints may become incompatible with the mass constraint $w \in \Deltafree_C$.

As the name indicates we will make use of the dual variables of \eqref{eq:in_meta_active_set}.
In order to motivate these dual variables we recall \eqref{eq:discretized_problem}:
\begin{equation*}
	\begin{aligned}
		\text{Minimize} \quad & F_q(\Lambda w) + \frac{\alpha}{2} \norm{w}_M^2, \quad w \in \R^m \\
		\text{s.t.} \quad & 0 \le w \le 1 \\
		\text{and} \quad & \mathbf{1}^\transp M w = C
		.
	\end{aligned}
\end{equation*}

Next we indroduce the dual variables $\mu^- \in \R^m$ for the lower and $\mu^+ \in \R^m$ for the upper bounds as well as $\zeta \in \R$ for the weight constraint.
The corresponding KKT conditions can be formulated as
\begin{align*}
	0 &= \Lambda^* \nabla F\paren[auto](){\Lambda w} + \alpha \, w  - \mu^- + \mu^+ + \zeta \mathbf{1}  \\
	0 &\le w \, \perp \, \mu^- \ge 0\\
	0 &\le 1-w \perp \, \mu^+ \ge 0
\end{align*}
since $M$ is spd and diagonal.
By applying the complementarity function $\R^2 \ni (a,b) \mapsto \max\paren[auto]\{\}{a,cb} \in \R $ for arbitrary $c > 0$ and replacing $\mu^-$ by $\nu \coloneqq -\mu^-$, this system is seen to be equivalent to
\begin{subequations}
	\begin{align}
		\mu^+ & = \max\paren[auto]\{\}{-\Lambda^* \nabla F\paren[auto](){\Lambda w} - \alpha \, w  - \zeta \mathbf{1}, \; 0} 
		\\
		\nu & = \min\paren[auto]\{\}{-\Lambda^* \nabla F\paren[auto](){\Lambda w} - \alpha \, w  - \zeta \mathbf{1}, \; 0} 
		\label{eq:reformulated_kkt_mu_minus}  
		\\
		0 & = \nu - \min\paren[auto]\{\}{0, c \, w + \nu} 
		\label{eq:reformulated_kkt_active_set} 
		\\
		0 & \le 1-w \, \perp \, \mu^+ \ge 0
		.
	\end{align}
\end{subequations}

In contrast to SDM and SDMH, we make use of the dual variables $\nu$ and $\zeta$ in order to estimate the active set $\AA \coloneqq \setDef{1 \le i \le m}{w_i = 0}$ in each iteration.
Algorithmically this can be done by computing $\nu^{(k)}$ via \eqref{eq:reformulated_kkt_mu_minus} with $w^{(k)}$ as the solution of the previous inner problem and $\zeta^{(k)}$ the corresponding multiplier of the weight constraint.
Next by \eqref{eq:reformulated_kkt_active_set} we clearly have 
\begin{equation*}
	c \, w^{(k)} + \nu < 0 \quad \Rightarrow \quad \nu > 0 \quad \Rightarrow \quad w = 0
\end{equation*}
what will be used to compute $\AA^{(k+1)}$.
Furthermore we also have to consider the case that for given $w^{(k)}$, $\nu^{(k)}$ as well as  $c$, the set $\AA^{(k+1)}$ is so large that
\begin{equation}
	C \le \sum_{\mrep{i \notin \AA^{(k+1)}}{}} \abs{E_i}
	\label{eq:infeasibility_sas}
\end{equation}
is violated.
In this case, the inner problem \eqref{eq:in_meta_active_set} is not feasible.
It turns out that by increasing $c$ in these cases we can ensure the feasibility of the inner problem. 

The described approach is summarized in \cref{alg:inactive_entries_PDSAS}.
As initial guess for \eqref{eq:in_meta_active_set} we use the constant feasible vector in each iteration of \cref{alg:meta_active_set}.

\begin{algorithm}[ht]
	\caption{Approach for \eqref{eq:out_meta_active_set} in PDSAS}
	\label{alg:inactive_entries_PDSAS}
	\begin{algorithmic}[1]
		\setcounter{ALC@unique}{0}
		\REQUIRE previous $w^{(k)}$, $\zeta^{(k)}$, parameter $c$
		\ENSURE $\AA^{(k+1)}$ and $c>0$
		\STATE Set $\nu^{(k)} \coloneqq \min\paren[auto]\{\}{-\Lambda^*\nabla F_q(\Lambda w^{(k)}) - \alpha \, w^{(k)} -\zeta^{(k)}, \; 0}$
		\REPEAT
		\STATE Set $\AA^{(k+1)} \coloneqq \setDef[big]{1 \le i \le m}{\nu^{(k)}_i + c \ w^{(k)}_i < 0}$
		\IF{\eqref{eq:infeasibility_sas} is violated}
		\STATE Set $c \coloneqq 10c$ 
		\ENDIF
		\UNTIL{\eqref{eq:infeasibility_sas} is fulfilled}
	\end{algorithmic}
\end{algorithm}

%------------------------------------------------------------------
\subsection{Comparison of Accelerated Algorithms 1}
\label{subsec:comparison_accelerated_algorithms_1}
%------------------------------------------------------------------

In this section we compare the accelerated algorithms presented in \cref{tab:active_set_algorithms} as well as PGMA.
Again we make use of \cref{ex:predator_prey}, considering $\alpha = 0$ as well as $q=0$, i.e., the logarithmic D-criterion.
We refine the discretization of $X$ to contain $50^3 = \num{125000}$ $\DG_0$-elements.
Besides the regular stopping criterion we stopped the algorithms from \cref{tab:active_set_algorithms} after $300$~iterations when necessary. 
For the solution of each restricted master problem, see \cref{item:SD_RMP} of \cref{alg:SD}, we allowed up to $5 \cdot 10^4$ iterations of Torsney's \cref{alg:Torsney}.
The same iteration limit is used for \cref{alg:malitsky_prox_gradient} for the solution of \eqref{eq:in_meta_active_set}.
The results are described in \cref{fig:SD_overview}.
As before, the time to setup the elementary Fisher information matrices is excluded from all timings.

\begin{figure}[htb]
\centering
\subfloat[]{%
	\includegraphics[width=0.42\textwidth]{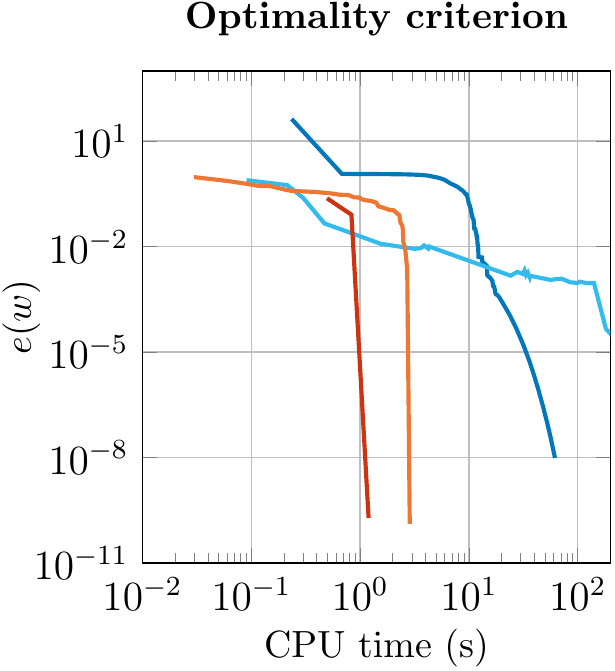}
	\label{fig:SD_pointwise_error}
	}%
\hfill%
\subfloat[]{%
	\includegraphics[width=0.42\textwidth]{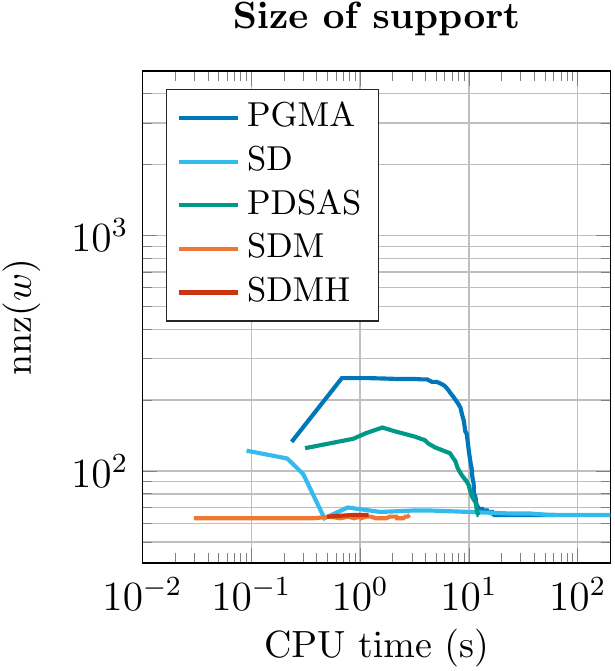}
	\label{fig:SD_support}
	}%
	\caption{\cref{ex:predator_prey} with $m = \num{125000}$; comparison of accelerated algorithms for $\alpha = 0$ by (a) value of $e(w)$ from \eqref{eq:error} and (b) size of the support.}
\label{fig:SD_overview}
\end{figure}

First we compare the violation of the contraints versus the CPU time elapsed.
The corresponding chart can be found in \cref{fig:SD_pointwise_error}.
All algorithms beside SD reached an iterate which fulfills the stopping criterion \eqref{eq:stopping_criterion}.
Note that all iterates of PDSAS exept the last one are infeasible.
(Due to numerical errors this is also the case of some iterates of SD.)
Therefore PDSAS is omitted from \cref{fig:SD_pointwise_error}.
While PDSAS needed 13.7~seconds, SDM took about 2.8~seconds and SDMH took less than 1.5~seconds to terminate.
For comparison, the unaccelerated PGMA (\cref{alg:malitsky_prox_gradient}) required 62~seconds to run.

Next in \cref{fig:SD_support} we compare the number of non-zero elements of the iterates.
Obviously the usage of PGMA results in iterates with the largest sizes of support.
The first iterates of SD also have a large support, what can be explained by the fact that each used vertex increases the size of the support possibly by more than one.
Considering PDSAS we plotted the quantity of positive entries of the iterates.
Nevertheless due to the construction of the algorithm every iterate execpt the final one contains negative entries.
Such a behavior is not observed by SDM or SDMH since here the inserted vertices have only one non-zero element each.
Thus the size of the support of the iterates does not grow larger than the support of the solution.
All algorithms reached an iterate with 65 non-zero elements.
Nevertheless one should keep in mind that the results of SD are rounded while the others are not.

Let us finally compare the behavior of the algorithms in more detail.
While it took less than 0.1~seconds to compute the first iterate in SD and SDM, SDMH required about 0.8~seconds.
But already the third iterate of SDMH (after 1.5~seconds) fulfilled the stopping criterion \eqref{eq:stopping_criterion}, whereas the other algorithms needed much more time to terminate.
This behavior confirms our expectations described in \cref{subsec:GASS}, that SDMH might require fewer outer iterations but generally has larger inner problems than SDM.

%------------------------------------------------------------------
\subsection{Comparison of Accelerated Algorithms 2}
\label{subsec:comparison_accelerated_algorithms_2}
%------------------------------------------------------------------

In this subsection, we consider only the two best algorithms from \cref{subsec:comparison_accelerated_algorithms_1}, SDM and SDMH.
The example is taken from \cite{NeitzelPieperVexlerWalter2018} and it is even larger than our previous examples.
\begin{example}[Stationary diffusion problem]
	\label{ex:stationary_diffusion}
	We consider the nonlinear model
	\begin{equation*}
		\begin{aligned}
			- \nabla \cdot \paren[big](){\exp(m_p)\nabla y} & = 0 & & \text{in } \Omega \coloneqq (0,1)^2 \\
			y & = x_1 & & \text{on } \Gamma_D \coloneqq \{0,1\} \times (0,1) \\
			\exp(m_p) \, \partial_n y & = 0 & & \text{on } \partial \Omega \setminus \Gamma_D,
		\end{aligned}
	\end{equation*}
	where $m_p(x) = \sum_{i=1}^5 \sum_{j=1}^5 p(i,j) \sin(\pi i x_1) \sin(\pi j x_2)$.

	Since this is a sensor placement problem, the design space~$X$ equals the domain~$\Omega$.
	The sensitivity of $y$ w.r.t $p(i,j)$ will be denoted by $\delta y_{i,j}$ and fulfills
	$-\nabla\cdot \paren[auto](){\exp(m_q)\cdot  \nabla \delta y_{i,j}} = \nabla \cdot \paren[auto](){\exp(m_q) \sin(i \pi x_1) \sin(j \pi x_2) \nabla y}$ in $\Omega$ together with the boundary conditions $\delta y_{i,j} = $ on $\Gamma_D$ and $\exp(m_q)\partial_n \delta y_{i,j} = 0$ on $\partial \Omega \setminus \Gamma_D$.
	Arranging the full set of sensitivities into the matrix
	\begin{equation*}
		\delta y =
		\begin{pmatrix}
			\delta y_{1,1}
			&
			\dots
			&
			\delta y_{1,5}
			\\
			\vdots
			&
			&
			\vdots
			\\
			\delta y_{5,1}
			&
			\dots
			&
			\delta y_{5,5}
		\end{pmatrix}
	\end{equation*}
	we find the following expression for the elementary Fisher information matrices:
	\begin{equation*}
		\Upsilon(x)
		=
		\vectorize(\delta y(x))^\transp \vectorize(\delta y(x))
		\in \R^{25 \times 25}
		.
	\end{equation*}
	Recall that we take $\Upsilon_i$ on each cell of the discretized design space $X$ to be its value in the midpoint.
	We choose $p = 0 \in \R^{5 \times 5} \simeq \R^{25}$ as nominal value of the parameter.
	We discretize $\Omega$ as well as $X$ by identical meshes containing $m \approx \num{430000}$ triangular elements.
	Each element has maximal edge length of $0.0057$ but the elements are not precisely of equal size.
	We set $C = 10^{-4} \abs{X} = 10^{-4}$, so the experimental budget allows us to allocate a total weight of $0.01\%$ of the sum of the weights of all admissible experiments.
\end{example}

We aim to solve \cref{ex:stationary_diffusion} for $\alpha = 0$ and $q = 0$, i.e., we consider the logarithmic D-criterion for the unregularized problem.
The preprocessing of the data $\Upsilon_i$ (which is once again not included in the timings) takes about $610$~CPU seconds.
Based on the observations in \cref{subsec:comparison_accelerated_algorithms_1} we only present the results for SDM and SDMH (see \cref{tab:active_set_algorithms}), since the remaining methods exhibit a much slower convergence behavior.
Similarly as in the previous example, besides the stopping criteria already decribed, the algorithm for solving the inner problems \eqref{eq:in_meta_active_set} also terminates when $10^4$ iterations are reached.

\begin{figure}[htb]
\centering
\subfloat[]{%
	\includegraphics[width=0.45\textwidth]{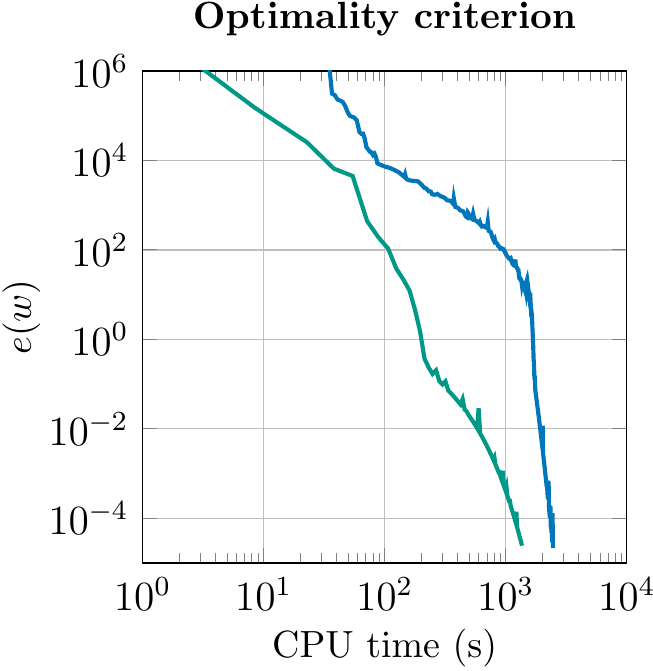}
	\label{fig:SD2_pointwise_error}
	}%
\hfill%
\subfloat[]{%
	\includegraphics[width=0.45\textwidth]{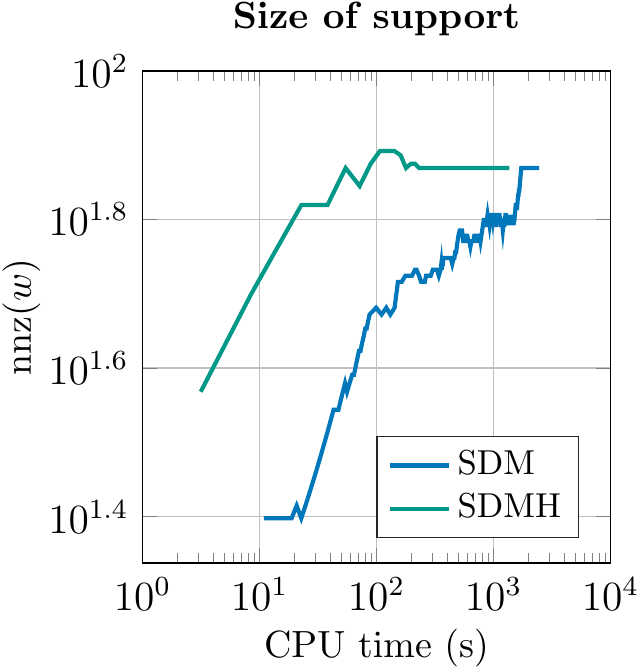}
	\label{fig:SD2_support}
	}%
	\caption{\cref{ex:stationary_diffusion} for $\alpha = 0$ and $q = 0$, Comparison of accelerated algorithms with PGMA as algorithm for the RMP's by (a) value of $e(w)$ from \eqref{eq:error} and (b) size of the support out of a maximum of $m \approx \num{430000}$.}
\label{fig:SD2_overview}
\end{figure}

\Cref{fig:SD2_pointwise_error} compares the satisfaction of the optimality criterion as measured by \eqref{eq:error}.
The algorithms SDM and SDMH needed $\num{2400}$ and $\num{1300}$~CPU seconds, respectively.
\Cref{fig:SD2_support} describes the support of the iterates over the time.
Similarily as in the previous example, SDM and SDMH typically underestimate the size of the support until convergence.

\section{Discussion}

In this paper we discussed first-order methods (FISTA and PGMA) for a class of optimal experimental design (OED) problems, which contain pointwise upper bounds as well as a maximal total weight.
PGMA exhibited a significanly more favorable convergence behavior, which can be explained by the fact that FISTA is unable to increase the stepsize~$\gamma_k$.
It turns out that this strategy is not ideal for the problem class under consideration.

Subsequently, we discussed four acceleration strategies.
These comprised the simplicial decomposition method (\cref{alg:SD}) with Torsney's \cref{alg:Torsney} as inner solver, as well as three variants (SDM, SDMH, PDSAS) of a generic active set strategy (GASS).
The latter are based on the idea of working with vertices of the simplex $\Delta_C$ rather than vertices of polyhedron $\Delta_C \cap \bounds$ .
As was illustrated in \cref{fig:SD_overview}, the methods of class GASS outperform PGMA as well as SD.

The algorithms of class GASS differ w.r.t.\ the strategy of how many entries of $w^{(k)}$ may become inactive in one iteration.
While SDM allows this only for one entry per iteration, SDMH allows several.
As expected, allowing more entries to become active may decrease the number of outer iterations at the cost of an increasing size of the inner problems.
For the examples considered throughout this paper, SDMH outperformed SDM, but we make no claim that this is always the case.
We also considered a version of the primal-dual active set strategy, which we termed PDSAS.
In our experiments, this strategy was not as effective SDM or SDMH but still outperformed simplicial decomposition with Torsney's method as inner solver.

The precomputation of the elementary Fisher information matrices~$\Upsilon_i$ is a significant part of the overall run time for large scale problems.
Therefore, there is opportunity to develop adaptive discretization strategies of the design space to further reduce computational cost.

\appendix

%------------------------------------------------------------------
\section{Proof of \cref{lemma:technical_results}}
\label{section:proof_lemma:technical_results}
%------------------------------------------------------------------

\begin{proof}
	\Cref{item:monotonicity_W_P}:
	Recall the definitions $W(v) \coloneqq \sum_{i=1}^m \abs{E_i} v_i$ and $P(\zeta) \coloneqq \proj{\bounds}{f-\zeta}$ from \eqref{eq:nnz_neo_and_others}, where $\bounds = [0,1]^m$.
	For arbitrary numbers $\zeta_1 > \zeta_2$ we get
	\begin{equation*}
		W(P(\zeta_1)) - W(P(\zeta_2)) = W( P(\zeta_1) - P(\zeta_2)) \le 0 
	\end{equation*}
	since $P(\zeta_1) \le P(\zeta_2)$ and $\abs{E_i} > 0$ holds for all $i = 1, \dots, m$.

	\Cref{item:monotonicity_zeta}:
	We split the proof into two cases. 
	Recall that the function $\zeta$ was defined in \eqref{eq:definition_of_zeta_function} and that we assumed that the entries in~$f$ are sorted in descending order.
	First we handle the case $\ell^* = k^*$, where obviously $\sum_{i=1}^{k^*} \abs{E_i} = C$ holds.
	By a simple reformulation we get
	\begin{align*}{2}
		\zeta(k^*,k^*-1)
			&=
			\abs{E_{k^*}}^{-1} \paren[auto](){\sum_{i=1}^{k^*-1}\abs{E_i} + \abs{E_{k^*}} f_{k^*} - C} 
			\\
			&=
			f_{k^*} + \abs{E_{k^*}}^{-1} \paren[auto](){\sum_{i=1}^{k^*-1}\abs{E_i}  - C}
			\\
			&=
			f_{k^*} + \abs{E_{k^*}}^{-1} (- \abs{E_{k^*}})
			\\
			&=
			f_{k^*} -1
			\\
			&=
			\zeta(k^*,k^*).
	\end{align*}
	In the second case, i.e., $0 \le \ell \le k^* -2$, we have from \eqref{eq:definition_of_zeta_function}
	\begin{align*}
		\paren[auto](){\sum_{i=\ell+1}^{k^*} \abs{E_i}} \, \zeta(k^*,\ell)
			&
			=
			\sum_{i=1}^\ell\abs{E_i} + \sum_{i=\ell+1}^{k^*} \abs{E_i} f_i - C
			\\
			&
			=
			\sum_{i=1}^{\ell+1}\abs{E_i} + \sum_{i=\ell+2}^{k^*} \abs{E_i} f_i - C -\abs{E_{\ell+1}} + \abs{E_{\ell+1} } f_{\ell+1}
			\\
			&
			=
			\paren[auto](){\sum_{i=\ell+2}^{k^*} \abs{E_i}} \, \zeta(k^*,\ell+1) + \abs{E_{\ell+1}} (f_{\ell+1}-1) 
			.
	\end{align*}
	By further reformulation we get
	\begin{alignat*}{2}
			&
			&
			\paren[auto](){\sum_{i=\ell+1}^{k^*} \abs{E_i}} \, \zeta(k^*,\ell)
			&
			=
			\paren[auto](){\sum_{i=\ell+2}^{k^*} \abs{E_i}} \, \zeta(k^*,\ell+1)  +\abs{E_{\ell+1}} (f_{\ell+1}-1) 
			\\
			\Leftrightarrow \quad
			&
			&
			\abs{E_{\ell+1}} \, \zeta(k^*,\ell) + \paren[auto](){\sum_{i=\ell+2}^{k^*} \abs{E_i}} \, \zeta(k^*,\ell) 
			&
			=
			\paren[auto](){\sum_{i=\ell+2}^{k^*} \abs{E_i}} \, \zeta(k^*,\ell+1)  +\abs{E_{\ell+1}} (f_{\ell+1}-1) 
			\\			
			\Leftrightarrow \quad
			&
			&
			\abs{E_{\ell+1}} (\zeta(k^*,\ell)-f_{\ell+1}+1)
			&
			=
			\paren[auto](){\sum_{i=\ell+2}^{k^*} \abs{E_i}} \, \paren[big](){\zeta(k^*,\ell+1) - \zeta(k^*,\ell)} 
			. 
	\end{alignat*}
	Since $\abs{E_i} > 0$ holds for all $i$, we obtain \eqref{eq:monotonicity_zeta}.

	\Cref{item:feasibility_lstar_equal_kstar}:
	Note that due to $k^* =\ell^*$ and assumption \eqref{eq:assumption_C} we have $k^* = \ell^* < m$.
	First we prove the statment for $i=k^*$:
	\begin{equation*}
		f_{k^*+1} - \zeta(k^*,k^*) = f_{k^*+1} -(f_{k^*} -1) < 1,
	\end{equation*}
	where we used the definition of $\zeta$ in \eqref{eq:definition_of_zeta_function} and the fact that  $f_{k^*+1}$ is stricly less than $f_{k^*}$ since the entries of $f$ are sorted and $v_{k^*+1} = \proj{[0,1]}{f_{k^*+1}-\zeta^*} = 0 < v_{k^*}$.

	Now we consider $i \in \{ 1,\dots, k^*-1 \}$.
	Note since $k^* = \ell^*$ we have $\sum_{i=1}^{k^*} \abs{E_i} = C$ and thus
	\begin{align*}
		f_{i+1} -\zeta(k^*,i) 
			&
			=
			f_{i+1} - \paren[auto](){\sum_{j=i+1}^{k^*} \abs{E_j}}^{-1} \paren[auto](){ \sum_{j=1}^i\abs{E_j} + \sum_{j=i+1}^{k^*} \abs{E_j} f_j - C }
			\\
			&
			=
			f_{i+1} - \paren[auto](){\sum_{j=i+1}^{k^*} \abs{E_j}}^{-1} \paren[auto](){ -\sum_{j=i+1}^{k^*}\abs{E_j} + \sum_{j=i+1}^{k^*} \abs{E_j} f_j  }
			\\
			&
			=
			f_{i+1} - \paren[auto](){\sum_{j=i+1}^{k^*} \abs{E_j}}^{-1} \paren[auto](){ \sum_{j=i+1}^{k^*} \abs{E_j} (f_j-1)  }
			\\
			&\ge
			f_{i+1} - \paren[auto](){\sum_{j=i+1}^{k^*} \abs{E_j}}^{-1} \paren[auto](){ \sum_{j=i+1}^{k^*} \abs{E_j}   } (f_{i+1}-1)
			\\
			&
			=
			1
			.
	\end{align*}
	This estimate is based on the fact that $f_j \le f_{i+1}$ holds for all $j \ge i+1$.

	\Cref{item:feasibility_lstar_smaller_kstar}:
	First we observe that if $f_{j+1} - \zeta(k^*,j) < 1$ holds for some $j \in \{ \ell_{\min}, \dots, k^*-1 \}$, then this is also the case for all $i \in \{ j+1, \dots, k^*-1 \}$.
	This can be proved iteratively since for $j \le k^*-2$ one gets
	\begin{equation}
		1 > f_{j+1} - \zeta(k^*,j) \ge f_{j+2} - \zeta(k^*,j) > f_{j+2} - \zeta(k^*,j+1)
		\label{eq:feasibility_monotonicity}
	\end{equation}
	by utilizing \eqref{eq:monotonicity_zeta}.

	Now the following strategy is applied iteratively for increasing $i \in \{ \ell_{\min}, \dots, k^*-1 \}$.
	If $1 \le f_{i+1} - \zeta(k^*,i)$ holds, we can increase $i$ by one.
	Otherwise $1 > f_{i+1} - \zeta(k^*,i)$ holds, which is fulfilled at least for $i=r$.
	For $i < k^*-1$ we can apply \eqref{eq:feasibility_monotonicity}, which proves the claim.
\end{proof}

% Insert bibliography
\printbibliography

\end{document}